\documentclass{amsart}
\usepackage{amsfonts}
\usepackage{color}

\usepackage{graphicx}

\newtheorem{thm}{Theorem}[section]
\newtheorem{cor}[thm]{Corollary}
\newtheorem{lema}[thm]{Lemma}
\newtheorem{prop}[thm]{Proposition}
\theoremstyle{definition}
\newtheorem{defn}[thm]{Definition}
\theoremstyle{remark}
\newtheorem{rem}[thm]{Remark}
\theoremstyle{example}

\numberwithin{equation}{section}
\newcommand{\R}{\mathbb R}
\newcommand{\N}{\mathbb N}
\newcommand{\MM}{\mathcal{M}}
\newcommand{\LL}{\mathcal{L}}

\newcommand{\KK}{\mathcal{K}}

\newcommand{\ve}{\varepsilon}
\newcommand{\lam}{\lambda}

\begin{document}
\title[\hfil PRINCIPAL EIGENVALUES]
{Principal eigenvalues of fully nonlinear integro-differential elliptic equations with a drift term}
\author{Alexander Quaas, Ariel Salort and Aliang Xia}

\address{Alexander Quaas\hfill\break\indent
Departamento de Matem\'{a}tica, Universidad T\'ecnica Federico Santa Mar\'{\i}a
\hfill\break\indent  Casilla V-110, Avda. Espa\~na, 1680 --
Valpara\'{\i}so, CHILE.}
\email{{\tt alexander.quaas@usm.cl}}

\address{Ariel Salort \hfill\break\indent
CONICET  \hfill\break\indent
Departamento de Matem\'{a}tica, FCEyN UBA
\hfill\break\indent Ciudad Universitaria, Pab I (1428)
\hfill\break\indent Buenos Aires,
ARGENTINA. }
\email{{\tt asalort@dm.uba.ar}}

 \address{Aliang Xia \hfill\break\indent
 Department of Mathematics, Jiangxi Normal University
\hfill\break\indent Nanchang, Jiangxi 330022,
P.~R.~China.}
\email{{\tt xiaaliang@126.com}}

\subjclass[2010]{35J60; 47G20; 35P30}

\keywords{Principal eigenvalue, integro-differential equation, regularity, Krein-Rutman theorem.}

\begin{abstract}
 We study existence of  principal eigenvalues of  fully nonlinear integro-differential elliptic equations with a drift term via the Krein-Rutman theorem which based on  regularity up to boundary of viscosity solutions. We also show the simplicity of the eigenfunctions in viscosity sense by a nonlocal version of ABP estimate.
\end{abstract}

\maketitle


 

 \section{Introduction}
 In this article,  we study the regularity of viscosity solutions and spectral properties of non-divergence integro-differential equations. To be more precise, we consider non-local elliptic equations with a drift term with the following form
\begin{equation} \label{opp}
	 I u(x)= \inf_{a\in A}\sup_{b \in B}\left\{L_{K_{a,b}}u(x) + c_{a,b}(x)\cdot \nabla u(x)\right\}=0,
\end{equation}
where $\{L_{K_{a,b}}\}_{a\in A, b\in B}$ a family of integro-differential operators defined by 
\begin{align*}
	&L_{K_{a,b}}u=\int_{\R^n} \delta(u,x,y) K_{a,b}(y) \, dy,\\
	&\delta(u,x,y)=u(x+y)+u(x-y)-2u(x).
\end{align*}
The function  $c_{a,b}$ is assumed to be uniformly bounded in $\Omega$ and the family of kernels $\{K_{a,b}\}_{a\in A, b\in B}$ are symmetric and comparable with  the respective kernel of the fractional laplacian $-(-\Delta)^s$, for $s\in(0,1)$. 

Equations of type \eqref{opp} arise from stochastic control problems,  namely in competitive stochastic games with two or more players,  which are  allowed to choose from
different strategies at every step in order to maximize the expected value of some functions at the first exit point of a domain, see for instance \cite{H}. The integro-differential equation like (\ref{opp}) correspond to purely jump processes when diffusion and drift are neglected, which have been studied intensively in the last years, see \cite{CS,CS2,ROS,S} and references therein and Chang-Lara \cite{CL} considered the case with a drift term and the kernel is uniformly and not-symmetric. 

In this article, we also consider the operator  $I$ as in \cite{CL}. More precise, we are interested in studying  the equation $-Iu=f$ in a given domain $\Omega$, $u$ being a function vanishing outside the domain, and $f$ is assumed to be a continuous function. This problem, and a generalization to possibly non-symmetric kernels, was treated in \cite{CL}, where existence of solutions and interior regularity results were obtained by using the same techniques that in \cite{BCI}, \cite{CC}, \cite{S} and \cite{KS}. 
 In our paper, we discuss  $C^\alpha$ regularity up to the boundary by using the ideas in \cite{BDGQ} to analyze the behavior of the maximal Pucci operator near the boundary . Then, having those results, we are aimed at establishing the existence of the principal eigenvalues corresponding to operator $-I$ with Dirichlet boundary conditions via the classical Krein-Rutman theorem  \cite{KR,M} and compactness arguments. 

The eigenvalue problems have been  extensively studied for  nonlinear  operators, we give a quick review of its here. 
In \cite{P}, Pucci first noticed the phenomena of nonlinear operators possessing two principle half-eigenvalue (or semi-eigenvalue, or demi-eigenvalue). It also discovered by Berestycki \cite{B} for Sturm-Liouville equations. An important step in studying these types of questions was made by Lions \cite{L}, who used stochastic methods to study the principle half-eigenvalues of certain Bellman operators and also the ideas of Berestycki, Nirenberg and Varadhan \cite{BNV}, who discovered deep connections between the maximum principle  and principle eigenvalues of linear operators. The question of existence of principle eigenvalues of Pucci extremal operator studied by Felmer and Quaas \cite{FQ2}. The principal eigenvalues for fully nonlinear uniformly elliptic operators in non-divergence form as well as homogeneous and convex  (or concave) was considered by  Quaas and Sirakov \cite{QS} and \cite{QS2}. Ishii and Yoshimura \cite{IY} and  Armstrong \cite{A} showed analogous results as \cite{QS} for operators which not necessarily convex, such as Bellman-Isaacs operator. Birindelli and Demengel \cite{BD,BD2} have show similar results for certain nonlinear operators which are degenerate elliptic.
For more on principle eigenvalues of nonlinear elliptic operators, we refer reader to \cite{BEQ,S2} and references therein. In this article, we focus on the principle half-eigenvalues of 
non-local fully nonlinear elliptic operator $-I$. 

We make the convention that any time we say a non-regular function satisfies an (in)equality, we shall mean it is satisfies in the viscosity sense-see for example \cite{CL,CS2} for definitions and properties of these.

With this in mind, following the definitions in \cite{BNV,QS}, we define the following (finite, see Lemma \ref{p1} blow) quantities
\begin{eqnarray*}
\lam_1^+(I,\Omega)& = \sup\{\lam\, : \, \exists v\in C(\bar{\Omega})\cap L^1(\omega_s), v>0 \mbox{ in }\Omega  \mbox{ and } v\ge0 \mbox{ in }\R^n\setminus\Omega \\
&\mbox{ such that }  Iv+\lam v\leq 0 \mbox { in } \Omega \}, \\
\lam_1^-(I,\Omega) &= \sup\{\lam\, : \, \exists v\in C(\bar{\Omega})\cap L^1(\omega_s), v<0 \mbox{ in }\Omega \mbox{ and } v\le0 \mbox{ in }\R^n\setminus\Omega\\
& \mbox{ such that }  Iv+\lam v\geq 0 \mbox { in } \Omega \},
\end{eqnarray*}
where weight function $\omega_s$ is given in section 2 such that the operator is well-defined. Then $\lam_1^+(I,\Omega)$  and $\lam_1^-(I,\Omega)$ are the principal half-eigenvalues of $-I$ in $\Omega$.

Now, we can state our main results. Our first result is
\begin{thm} \label{teo.main}
Let $\Omega$ be a $C^2$ bounded domain of $\R^n$  and assume $s\in (\frac{1}{2},1)$. There exists functions $\phi^+, \phi^-\in C(\bar{\Omega})\cap L^1(\omega_s)$ such that $\phi^+>0$ and $\phi^-<0$ in $\Omega$, and which satisfy
 \begin{equation*} 
\begin{cases}
	-I\phi^+=\lam^+_1(I,\Omega) \phi^+&\qquad {\mbox in }\,\,\Omega,\\
	-I\phi^-=\lam^-_1(I,\Omega) \phi^-&\qquad{ \mbox in }\,\,\Omega,\\
	\phi^+=\phi^-=0  &\qquad {\mbox in }\,\,\R^n\setminus \Omega.
\end{cases}	
\end{equation*} 
\end{thm}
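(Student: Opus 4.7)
The strategy follows the classical template of Berestycki--Nirenberg--Varadhan and Quaas--Sirakov adapted to the nonlocal setting: produce $\phi^+$ (and then $\phi^-$) as the principal eigenfunction of a compact, order-preserving, positively $1$-homogeneous solution map via the nonlinear Krein--Rutman theorem, and identify the associated eigenvalue with the sup-characterizations $\lam_1^\pm(I,\Omega)$.

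First I set up the solution operator. Using the existence theory from \cite{CL}, for every $f\in C(\bar\Omega)$ I solve
\begin{equation*}
-Iu=f\ \text{in }\Omega,\qquad u=0\ \text{in }\R^n\setminus\Omega,
\end{equation*}
and obtain a viscosity solution $u\in C(\bar\Omega)\cap L^1(\omega_s)$. (If comparison is delicate without a zeroth-order term, I instead solve $-Iu+Mu=f$ with $M$ large; since $I$ is positively $1$-homogeneous this shift only translates the spectrum.) Set $Tf:=u$. The boundary $C^\alpha$ regularity established earlier in the paper (via the Pucci analysis near $\partial\Omega$ in the spirit of \cite{BDGQ}) shows that $T$ sends bounded subsets of $C(\bar\Omega)$ into equi-H\"older families on $\bar\Omega$; by Arzel\`a--Ascoli, $T\colon C(\bar\Omega)\to C(\bar\Omega)$ is continuous and compact. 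The ABP-type comparison principle for $I$ (which also powers Lemma \ref{p1}) makes $T$ monotone, with $Tf>0$ in $\Omega$ whenever $f\ge 0$, $f\not\equiv 0$; positive $1$-homogeneity of $T$ is inherited from that of $I$.

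Next I invoke the nonlinear Krein--Rutman theorem of \cite{M} on the cone $K$ of nonnegative functions in $C(\bar\Omega)$ vanishing on $\partial\Omega$; if $K$ lacks a nonempty interior I pass to the weighted space built from $d^s$, where $d$ is the distance to $\partial\Omega$, or appeal to the version of \cite{M} that only requires strong positivity on $K$. This gives $\mu^+>0$ and $\phi^+\in K$ with $\phi^+>0$ in $\Omega$ and $T\phi^+=\mu^+\phi^+$, that is,
\begin{equation*}
-I\phi^+=\frac{1}{\mu^+}\phi^+\ \text{in }\Omega,\qquad \phi^+=0\ \text{in }\R^n\setminus\Omega.
\end{equation*}
To match $1/\mu^+$ with $\lam_1^+(I,\Omega)$, I note that $(\phi^+,1/\mu^+)$ is admissible in the sup defining $\lam_1^+$, so $1/\mu^+\le\lam_1^+(I,\Omega)$. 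For the reverse inequality I argue by contradiction: given a competitor $(v,\lam)$ with $\lam>1/\mu^+$, a sliding / Hopf-type argument based on viscosity comparison and the strict positivity of $\phi^+$ yields a critical $t>0$ with $t\phi^+\le v$ and a touching point violating the strong maximum principle, exactly as in \cite{QS,BEQ}. For $\phi^-$ I replay the construction on the swapped Isaacs operator
\begin{equation*}
\check I\psi(x)=\sup_{a\in A}\inf_{b\in B}\bigl\{L_{K_{a,b}}\psi(x)+c_{a,b}(x)\cdot\nabla\psi(x)\bigr\},
\end{equation*}
which satisfies $I(-\psi)=-\check I\psi$ and enjoys the same structural hypotheses as $I$; Krein--Rutman for the solution operator of $-\check I$ produces $\psi>0$ and $\lam>0$ with $-\check I\psi=\lam\psi$, and setting $\phi^-:=-\psi$ gives $-I\phi^-=\lam\phi^-$ with $\phi^-<0$, the identification $\lam=\lam_1^-(I,\Omega)$ being obtained as above.

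I expect the genuinely delicate step to be the matching between the Krein--Rutman eigenvalue and the BNV-type sup characterization, which is entangled with the strong maximum principle required for the sliding argument: both must be set up carefully in view of the nonlocal boundary decay $\sim d^s$ rather than $\sim d$. This is precisely where the hypothesis $s>\tfrac12$, which makes the drift term subcritical with respect to $L_{K_{a,b}}$, is used in an essential way and where the earlier boundary regularity must be deployed quantitatively rather than merely qualitatively.
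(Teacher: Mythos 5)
Your proposal follows the same overall route as the paper: construct the solution operator $T$, show it is compact on the cone of nonnegative functions vanishing outside $\Omega$ via the global H\"older estimate and Arzel\`a--Ascoli, show it is (strictly) order-preserving via comparison and the strong maximum principle, apply Mahadevan's nonlinear Krein--Rutman theorem, and then identify the resulting eigenvalue with the BNV-style sup $\lam_1^+(I,\Omega)$ by a sliding argument that parallels the paper's Theorem~\ref{t2} and Lemma~\ref{p1}. Your explicit use of the ``swapped'' Isaacs operator $\check I$ satisfying $I(-\psi)=-\check I\psi$ is a clean way to make precise the paper's terse ``by a similar argument'' for $\phi^-$, and your remark that the cone of nonnegative $C(\bar\Omega)$-functions vanishing on $\partial\Omega$ has empty interior is a legitimate technical point the paper does not explicitly confront.

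However, there is a genuine gap: you never verify the hypothesis $\bold{(H)}$ of Theorem~\ref{teo.kr}, namely the existence of a nonzero $f\in K$ and $M>0$ with $f\preceq MTf$. This is \emph{not} implied by the pointwise statement ``$Tf>0$ in $\Omega$ whenever $f\ge0$, $f\not\equiv0$'', because $Tf$ could decay faster near $\partial\Omega$ than $f$ does, and then no constant $M$ works; precisely because $K$ has empty interior, strong positivity in your sense does not substitute for $\bold{(H)}$. In the paper this is a substantive ingredient occupying all of Section~5: one shows (Lemma~\ref{lema.aux}) that the barrier $\xi=d^\beta$ with $\beta\in(\beta_2,2s)$ satisfies $\mathcal M_\LL^-\xi\ge C d^{\beta-2s}\ge 0$ in a collar $\Omega_\delta$ (this is where $s>1/2$ is used to dominate the drift term $c^+\beta d^{\beta-1}$), and then (Lemma~\ref{lema.cond}) compares $K\xi$ with the solution $u$ of $-Iu=f$ to conclude $u\ge K\xi\ge K'f$. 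Your weighted-space idea would require exactly this quantitative lower bound $Tf\gtrsim d^\beta$, so the barrier computation is unavoidable; as written, the step from the structural properties of $T$ to the applicability of Krein--Rutman is missing. Once $\bold{(H)}$ is supplied, the rest of your argument goes through and matches the paper's.
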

From here, we say eigenvalue $\lam^+_1(I,\Omega)$ (resp. $\lam^-_1(I,\Omega)$) corresponds a eigenfunction $\phi^+>0$ (resp. $\phi^-<0$).

Next, we use the Aleksandrov-Bakelman-Pucci (ABP) estimate and some techniques in \cite{BNV} (see also \cite{QS} and \cite{A}) to prove the simplicity of eigenfunctions. Therefore, we can get our second result.
\begin{thm} \label{teo.main 2}
Let $\Omega$ be a $C^2$ bounded domain of $\R^n$  and assume  $s\in (\frac{1}{2},1)$. Assume there exists a viscosity  solution $u\in C(\bar{\Omega})\cap L^1(\omega_s)$ of
 \begin{equation} \label{1.2}
\begin{cases}
	-Iu=\lam^+_1(I,\Omega)u&\qquad {\mbox in }\,\,\Omega,\\
	u=0  &\qquad {\mbox in }\,\,\R^n\setminus \Omega,
\end{cases}	
\end{equation} 
or of 
 \begin{equation} \label{1.3}
\begin{cases}
	-Iu\leq \lam^+_1(I,\Omega)u&\qquad {\mbox in }\,\,\Omega,\\
	u(x_0)>0, \quad u\leq0  &\qquad {\mbox in }\,\,\R^n\setminus \Omega,
\end{cases}	
\end{equation} 
for some  $x_0\in\Omega$. Then $u=t\phi^+$ for some $t\in\R$. If a function $v\in C(\bar{\Omega})\cap L^1(\omega_s)$ satisfies either (\ref{1.2}) or the reverse inequalities in (\ref{1.3}),
with $\lam^+_1(I,\Omega)$ replaced $\lam^-_1(I,\Omega)$, then $v=t\phi^-$ for some $t\in\R$.
\end{thm}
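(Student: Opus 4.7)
The plan is to adapt the Berestycki--Nirenberg--Varadhan touching argument, in the form refined by Quaas--Sirakov and Armstrong for nonlinear operators, using the nonlocal ABP estimate mentioned in the abstract. The $\phi^-$ statement follows by an entirely analogous argument (applied to the dual inf-sup operator), so I focus on the $\phi^+$ case, and within it on the subsolution situation (\ref{1.3}); the equation case (\ref{1.2}) reduces to (\ref{1.3}) by a sign argument.

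Given $u$ satisfying (\ref{1.3}), the first step is to define the critical multiple
\[
t^* := \inf\{t>0 : t\phi^+\ge u \text{ in } \Omega\} \in (0,\infty),
\]
finiteness coming from the boundary regularity of $u$ and $\phi^+$ (Theorem \ref{teo.main} and the preceding $C^\alpha$ estimates), and strict positivity from $u(x_0)>0$. Set $w := t^*\phi^+ - u$; then $w\ge 0$ in $\Omega$ by construction, and $w=-u\ge 0$ in $\R^n\setminus\Omega$. The inf-sup structure of $I$ with uniformly bounded drift gives the standard sandwich $I(v_1) - I(v_2)\le \MM^+(v_1-v_2)+C|\nabla(v_1-v_2)|$, where $\MM^\pm$ are the Pucci extremal operators for the kernel family. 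Combining the positive homogeneity identity $-I(t^*\phi^+) = \lam_1^+(I,\Omega)\, t^*\phi^+$ with the subsolution inequality $-Iu\le \lam_1^+(I,\Omega)\,u$, one obtains in the viscosity sense
\[
\MM^-(w) + \lam_1^+(I,\Omega)\, w \le C|\nabla w| \quad \text{in } \Omega.
\]

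Since $w\ge 0$ on all of $\R^n$, a nonlocal strong minimum principle then applies: if $w(x_*)=0$ at an interior point, testing with the admissible viscosity test function $\varphi\equiv 0$ on a small ball around $x_*$ (and $\varphi=w$ outside) forces, via the positivity of the kernels, $\delta(w,x_*,y)=0$ for a.e.\ $y$, hence $w\equiv 0$ globally and $u=t^*\phi^+$. If instead $w>0$ throughout $\Omega$ with $w(x_k)\to 0$ along some sequence $x_k\to\partial\Omega$, I invoke the nonlocal ABP estimate together with an explicit $d(x)^s$-barrier to derive the quantitative Hopf-type comparison $w\ge \eta\,\phi^+$ throughout $\Omega$ for some $\eta>0$, directly contradicting the minimality of $t^*$. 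For an eigenfunction $u$ solving (\ref{1.2}), a sign argument (apply the subsolution result to $u^+$, which inherits the required subsolution inequality at points where $u^+=0$) shows $u$ cannot change sign, reducing the equation case to (\ref{1.3}).

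The main obstacle is the Hopf-type boundary comparison. In the classical local elliptic setting this is a one-line normal-derivative argument, but here $w$ and $\phi^+$ both vanish at the fractional rate $d^s$ with $s<1$, so no first-derivative sign is available; one must instead combine the nonlocal ABP estimate with a $d^s$-barrier to upgrade strict positivity of $w$ inside $\Omega$ to strict positivity of the ratio $w/\phi^+$ uniformly up to the boundary. A secondary difficulty is absorbing the drift term $C|\nabla w|$ in the Pucci supersolution inequality: the hypothesis $s>\tfrac12$ is used precisely to keep the gradient subcritical with respect to $\MM^\pm$, so that the strong minimum principle and the ABP remain applicable in the required form.
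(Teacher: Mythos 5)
Your sliding argument is a valid starting point, and the interior-touching case (strong maximum principle, kernel positivity) is handled correctly. The genuine gap is the boundary step, where you invoke a ``Hopf-type comparison $w\geq\eta\,\phi^+$.'' No nonlocal Hopf lemma of that strength is established in this paper, and it cannot be extracted from the estimates that are: the boundary regularity of Lemma \ref{regu.b} / Theorem \ref{t.reg.g} is only a \emph{one-sided} $C^\beta$ estimate $|u|\leq Cd(x)^\beta$ for some $\beta<\beta_1\leq s$, not the sharp two-sided $d^s$-asymptotics your comparison tacitly assumes. With that information alone, $w$ could a priori vanish faster than $\phi^+$ as $x\to\partial\Omega$, so the ratio $w/\phi^+$ need not be bounded below and the ``quantitative Hopf'' step is a real missing piece, not a technicality to be absorbed.

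The paper routes around this entirely. Its proof (Theorem \ref{t2}, applied with $f\equiv 0$ via Remark \ref{r}) replaces the boundary Hopf estimate by a maximum principle in domains of small measure (Theorem \ref{t1}), itself a consequence of the nonlocal ABP estimate (Theorem \ref{t4.1}). One fixes a compact $K\subset\Omega$ with $|\Omega\setminus K|\le\varepsilon_0$; for $t$ large, $z_t=v-tu<0$ on $K$ by compactness; the Pucci inequality $\mathcal M^+_{\mathcal L}z_t\ge 0$ together with the small-domain principle on $\Omega\setminus K$ forces $z_t\le0$ in all of $\Omega$; at the critical $\tau=\inf\{t:z_t<0\}$, strictness of $z_\tau$ on the compact $K$ permits a further decrement $\tau-\eta$, again closed by the small-domain principle, yielding the contradiction. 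This structure never requires a boundary rate for $w$ or $\phi^+$, and it is the ingredient you would need to supply a substitute for. Two smaller points: the $u^+$-truncation you use to reduce \eqref{1.2} to \eqref{1.3} is unnecessary — if a solution of \eqref{1.2} is positive somewhere it already satisfies \eqref{1.3} (since $u=0$ outside $\Omega$), and if it is nonpositive throughout, it is a negative eigenfunction for $\lambda_1^+$, forcing $\lambda_1^+=\lambda_1^-$ and reducing to the sign-reversed case, as the paper argues; and the role of $s>\tfrac12$ is not to make the gradient ``subcritical for $\mathcal M^\pm$'' in the ABP itself, but to make the operator well defined and to ensure the gradient contribution is lower order in the barrier computations of Section 3.
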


The main tool to obtain the two principle half-eigenvalues is  the classical Krein-Rutmann theorem \cite{KR} and compactness arguments which based on the regularity up to boundary. The regularity up to boundary of solutions involving integro-differential operators was considered by Ros-ton and Serra \cite{ROS}. In our paper, we discuss  regularity up to boundary of viscosity solutions to  integro-differential operators with a gradient term. We should remark that we just consider the case $s\in (1/2,1)$ in our article since we need to more regularity to ensure the operator $I$ is well-defined, see also \cite{CL}. In fact, if coefficient $c_{a,b}\equiv0$ in $\Omega$ (cf. fractional Laplacian operator), we can prove our Theorems \ref{teo.main} and \ref{teo.main 2} are still true for all range $s\in(0,1)$.

We remark that the principal eigenvalue is  a starting point to study Rabinowitz bifurcation-type results, solutions at resonance, Ladezman-Lazer type results and Ambrosetti- Prodi phenomenon, see for example \cite{R,A1,FQS1,FQS2,S2} and references therein.

This article is organized as follows. In Section 2, we recall some definitions and some useful and known results. The regularity up to the boundary for Dirichlet problem involving operator (\ref{opp}) is obtained in Section 3. Section 4 is devoted to prove the ABP estimate related the operator (\ref{opp}). We prove  a technical lemma (that is, (H) condition) in order to apply the Krein-Rutman theorem in Section 5. We prove our main theorems, Theorems \ref{teo.main} and \ref{teo.main 2}, in Section 6. Finally, in section 7 we make an application of the principle eigenvalues.

\section{Preliminars}
To be precise about the formulas we presented in the introduction, we need to ask an integrability condition for the kernels around the origin. Throutout the paper we denote $\LL$ the class of all the linear operators given in \eqref{opp}, and, given $L\in\LL$ we assume that the operator $Lu(x)$ is defined for $u\in C^{1,1}(x) \cap L^1(\omega_s)$, where 
$$\omega_s(dy)=\min\{1,|y|^{-(n+2s)}\}dy.$$

We remark that the family of extremal Pucci operator for a function $u$ are computed at a point $x$ by
$$
	\mathcal{M}_{\mathcal{L}}^+ u(x)=\sup_{L\in \mathcal{L}} Lu(x), \qquad
	\mathcal{M}_{\mathcal{L}}^- u(x)=\inf_{L\in \mathcal{L}} Lu(x).
$$
Observe that $\LL$ and $\mathcal{M}^\pm_\LL$ depend on some additional parameters $\lam$, $\Lambda$ and $s$, but we do not make it explicit to do not overcharge the notation. 

We also say that an operator $I$ defined over a domain $\Omega\subset\R^n$ is elliptic with respect to the family of linear operators $\LL$ if for every $x\in \Omega$ and any pair of functions $u$ and $v$ where $Iu(x)$ and $Iv(x)$ can be evaluated, then also $Lu(x)$ and $Lv(x)$ are well defined and 
$$
	\mathcal{M}_{\mathcal{L}}^-(u-v)(x)\leq I u(x)-Iv(x) \leq \mathcal{M}_{\mathcal{L}}^+(u-v)(x).
$$

   $Lu$ is continuous in $B_r(x_0)$ if $u\in C^2(B_r(x_0) \cap L^1(\omega_s)$. Stability properties of $I$ depend on $Iu$ being continuous when $u$ is sufficient regular, in this case, $C^2(B_r(x_0)\cap L^1(\omega_s))$ is a reasonable requirement. As in \cite{CS2}, we define continuous elliptic operators as follows.

\begin{defn}
we say that $I$ is a \emph{continuous operator, elliptic with respect to} $\LL=\LL(\KK)$ in $\Omega$ if,
\begin{itemize}
	\item[(1)] $I$ is an elliptic operator with respect to $\LL$ in $\Omega$, 
	\item[(2)]	$Iu(x)$ is well defined for any $u\in C^{1,1}(x) \cap L^1(\omega_s)$ and $x\in \Omega$, 
	\item[(3)] $Iu$ is continuous in $B_r(x_0)$ for any $u\in C^2(B_r(x_0))\cap L^1(\omega_s)$ and $B_r(x_0)\subset \Omega$.
\end{itemize}	
\end{defn}

In the hypothesis we introduce we see that the non-local term of the family $\LL$ is actually obtained bounding our kernels by multiples of the kernel of the fractional laplacian. Along the paper, unless it is stated otherwise, it is assumed that $s\in (\frac{1}{2},1)$.

\subsection{Hypothesis}
We assume the following hypothesis on the family $\LL$ depending on a family of kernels $\KK$ and some additional parameters in the following way:
\begin{itemize}
	\item[(H1)] Every $I\in\mathcal{L}$ is of the form $I=L_K+c_{a,b}\cdot \nabla$ for $K\in\mathcal{K}$.
	
	\item[(H2)] There are constants $\lam\leq \Lambda$, such that for every $K\in\mathcal{K}$, 
	$$
		\frac{\lambda}{|y|^{n+2s}} \leq K(y) \leq \frac{\Lambda}{|y|^{n+2s}} 
	$$
	for $\Lambda\ge\lambda>0$.
	
	\item[(H3)] Operator $I$ is positive 1-homogenous in $u$, that is, $I(tu)=tI(u)$, for $t\ge0$.  
	
	\item[(H4)]   There is a constant $c^+>0$ such that
$$
	|c_{a,b}|\leq c^+ \mbox{uniformly in } \Omega.
$$
\end{itemize}
In this settings we can also write
$$
	\mathcal M_{\mathcal L}^\pm u(x)= \mathcal M_{\mathcal K}^\pm u(x) \pm c^+ |Du(x)|,
$$
with
\begin{align} \label{mk}
	\mathcal M_{\mathcal K}^+ u(x) =
		\sup_{K\in \mathcal K} L_K u(x) =  \int_{\R^N} \frac{S^+(\delta (u,x,y))}{|y|^{n+2s}} \, dy, \\
	\mathcal M_{\mathcal K}^- u(x) =
		\sup_{K\in \mathcal K} L_K u(x) =  \int_{\R^N} \frac{S^-(\delta (u,x,y))}{|y|^{n+2s}} \, dy,
\end{align}
where given $t\in \R$ we denote
	$$
		S^+(t)=\Lambda t_+ - \lambda t_-, \qquad 
		S^-(t)=\lambda t_+ - \Lambda t_-.
	$$

A particular example of operator that satisfies all the previous hypothesis are
$$
		\mathcal{L}=\{L_c=-(-\Delta)^{s}+c\cdot \nabla \, : \, |c|\leq c^+ \}
$$ 
where  the fractional Laplacian is defined as
$$
	-(-\Delta)^s u(x)=\int _{\R^n}\frac{\delta(u,x;y)}{|y|^{n+2s}} \, dy.
$$
We notice that the operator $\LL$ that we have defined belongs to  the more general class treated in \cite{CL}, where  no symmetry assumption on the kernels is made. 

\medskip

Let us fix some notations we will use along the paper. From now on we define for $\delta>0$ the set
$$
	\Omega_\delta:=\{y\in \Omega \,:\, d(y)<\delta\}.
$$
Also, along this paper we denote $d(x)$ the distance of $x$ to $\partial \Omega$, that is, 
$$
	d(x):=dist(x,\partial \Omega),\qquad x\in \Omega.
$$
It is well known that $d$ is Lipschitz continuous in $\Omega$ with Lipschitz constant $1$ and it s a $C^2$ function in a neighborhood of $\partial\Omega$ (see \cite{GT}, p. 354). We modify it outside this neighborhood to make it a $C^2$ function (still with Lipschitz constant 1), and we extend it to be zero outsider $\Omega$.

 Then we define our barrier function as follows
\begin{equation} \label{f.xi}
	\xi(x)=
	\begin{cases}
		d(x)^\beta &\mbox{if }x\in \Omega_\delta,\\
		\ell &\mbox{if }x \in \Omega\setminus \Omega_\delta,\\
		0 &\mbox{if }x \in \R^n\setminus\Omega
			\end{cases}
\end{equation}
for $\beta>0$ and a function $\ell$ such that $\xi$ is positive and $C^2$ in $\Omega$.

\subsection{Preliminary results}

In this section we present some results concerning   the family $\LL$. We denote the set of upper (resp. lower) semicontinuous in $\Omega$ by $USC(\Omega)$ (resp. $LSC(\Omega)$).
Then, we define the notion of viscosity solution in this setting as \cite{CL} (see also \cite{CS2}).

\begin{defn}
		Given a non local operator $I$ and a function $f:\Omega\to \R$ we say that $u\in LSC(\Omega)\cap L^1(\omega_s)$ is a \emph{super-solution (sub-solution)} to 
		$$
			Iu\leq (\geq) f \quad \mbox{ in the viscosity sense in } \Omega,
		$$
		if for every point $x_0\in\Omega$ and any neighborhood $V$ of $x_0$ with $\bar V\subset \Omega$ and for any $\varphi\in C^2(\bar V)$ such that $u(x_0)=\varphi(x_0)$ and
	$$
		u(x)<\varphi(x) \quad(\mbox{resp. } u(x)>\varphi(x)) \quad \mbox{ for all }x\in V\setminus \{x_0\}
	$$
	the function $v$ defined by
	$$
		v(x)=u(x)\quad \mbox{ if }x\in \R^n\setminus V \quad \mbox{ and }\quad v(x)=\varphi(x)\quad \mbox{ if }x\in V
	$$
	satisfies 
	$$
		Iv(x_0)\leq  f(x_0) \quad (\mbox{resp. }-Iv(x_0)\geq f(x_0).
	$$	
	Additionally, $u\in C(\Omega)\cap L^1(\omega_s)$ is a viscosity solution to $Iu=f$ in $\Omega$ if it is simultaneously a sub-solution and a super-solution.
\end{defn}

\begin{rem}
	(1) As in the usual definition, we may consider inequality instead strict inequality 
	$$
		u(x)\geq \varphi(x)\quad \mbox{ for all }x\in V\setminus{x_0},
	$$
	and "in some neighborhood $V$ of $x_0$" instead "in all neighborhood".
	
	(2)Other definitions and their equivalence can be founded in \cite{BCI}.
\end{rem}

A useful tool to be used is the following comparison principle between sub and super-solution   proved in \cite{CL}, Corollary 2.9.
\begin{lema} [Comparison principle]  \label{lema.comp}
Let $u\in LSC(\Omega)\cap L^1(\omega_s)$ and $v\in USC(\Omega)\cap L^1(\omega_s)$ be a super-solution and a sub-solution, respectively, of the same equation $Iw=f$ in $\Omega$ . Then $u\geq v$ in $\R^n \setminus \Omega$ implies $u\geq v$ in $\R^n$.
\end{lema}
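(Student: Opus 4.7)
The plan is to argue by contradiction using the Crandall--Ishii doubling-of-variables technique, in the version adapted to nonlocal equations in \cite{BCI,CS2,CL}. Suppose toward contradiction that $M := \sup_{\bar\Omega}(v - u) > 0$. Since $v - u \le 0$ on $\R^n \setminus \Omega$ by hypothesis and $v \in USC$, $u \in LSC$, this supremum is attained at some interior point $x_0 \in \Omega$. For each small $\ve > 0$ I would introduce the penalized functional
$$
\Phi_\ve(x,y) := v(x) - u(y) - \frac{|x-y|^2}{\ve}, \qquad (x,y) \in \bar\Omega \times \bar\Omega,
$$
and let $(x_\ve, y_\ve)$ denote one of its maximizers. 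Standard estimates give $x_\ve, y_\ve \to x_0$ (staying inside $\Omega$ for small $\ve$), $|x_\ve - y_\ve|^2/\ve \to 0$, and $v(x_\ve) - u(y_\ve) \to M$.

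Next, from the maximality of $\Phi_\ve$ the quadratic $\varphi(x) := v(x_\ve) + (|x-y_\ve|^2 - |x_\ve - y_\ve|^2)/\ve$ touches $v$ from above at $x_\ve$, and analogously $\psi(y) := u(y_\ve) - (|x_\ve - y|^2 - |x_\ve - y_\ve|^2)/\ve$ touches $u$ from below at $y_\ve$. Defining $v_\ve := \varphi$ on a small ball $B_r(x_\ve)$ and $v_\ve := v$ outside, and $u_\ve$ analogously on $B_r(y_\ve)$, the viscosity definitions yield $I v_\ve(x_\ve) \ge f(x_\ve)$ and $I u_\ve(y_\ve) \le f(y_\ve)$. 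Exploiting the $\inf\sup$ Isaacs structure of $I$, for any $\eta > 0$ one picks a common pair $(a, b^\ast)$ for which subtraction yields
$$
\bigl[L_{K_{a,b^\ast}} v_\ve(x_\ve) - L_{K_{a,b^\ast}} u_\ve(y_\ve)\bigr] + \bigl[c_{a,b^\ast}(x_\ve) \cdot \nabla \varphi(x_\ve) - c_{a,b^\ast}(y_\ve) \cdot \nabla \psi(y_\ve)\bigr] \ge f(x_\ve) - f(y_\ve) - \eta.
$$
I would then split the integral difference into the near field $|z| < r$ and the far field $|z| \ge r$: the near field contributes $O(r^{2-2s}/\ve)$ from the shifted quadratics, which vanishes as $r \to 0$ since $2s<2$; the far field is nonpositive, because $\Phi_\ve(x_\ve \pm z, y_\ve \pm z) \le \Phi_\ve(x_\ve, y_\ve)$ gives, after summing the $+z$ and $-z$ inequalities, $\delta(v, x_\ve, z) \le \delta(u, y_\ve, z)$ pointwise in $z$.

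The principal obstacle is the drift term. Since $\nabla \varphi(x_\ve) = \nabla \psi(y_\ve) = 2(x_\ve - y_\ve)/\ve$, the drift piece reduces to $(c_{a,b^\ast}(x_\ve) - c_{a,b^\ast}(y_\ve)) \cdot 2(x_\ve - y_\ve)/\ve$, which the bare hypothesis (H4) controls only by $4 c^+ |x_\ve - y_\ve|/\ve$, a quantity that need not vanish under the mere convergence $|x_\ve - y_\ve|^2/\ve \to 0$. This is precisely where the standing assumption $s > 1/2$ is decisive: the nonlocal operator has fractional order $2s > 1$, strictly larger than the first-order drift, so refining the penalty to $|x-y|^{2\sigma}/\ve$ with $\sigma \in (1/2, s)$---or equivalently extracting a strict lower bound of order $\ve^{-\sigma}$ from the near-field Pucci contribution---produces a strictly positive coercive term that dominates and absorbs the drift. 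Sending $\eta \to 0$, $r \to 0$, and finally $\ve \to 0$, continuity of $f$ gives $f(x_\ve) - f(y_\ve) \to 0$ while the coercive slack coming from $M > 0$ persists, yielding the contradiction $0 \ge \delta > 0$; hence $v \le u$ in $\R^n$.
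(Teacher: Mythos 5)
The paper does not actually prove this lemma: it quotes it from Chang--Lara \cite{CL}, Corollary~2.9, so there is no in-paper argument to compare against, and your proposal must stand on its own.

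Your doubling framework (maximize $\Phi_\ve(x,y)=v(x)-u(y)-|x-y|^2/\ve$, replace $v$ near $x_\ve$ and $u$ near $y_\ve$ by the quadratic test functions, extract a common pair $(a,b^\ast)$ via the $\inf\sup$ structure, split the nonlocal integral into near field $|z|<r$ and far field $|z|\ge r$) is the right skeleton, and you correctly identify the drift as the obstruction. But the fix you propose does not work, and the explanation of the $s>1/2$ role is not coherent.

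Be concrete about the obstruction. The drift contributes $(c_{a,b^\ast}(x_\ve)-c_{a,b^\ast}(y_\ve))\cdot p_\ve$ with $p_\ve=2(x_\ve-y_\ve)/\ve$. From $|x_\ve-y_\ve|^2/\ve\to 0$ one only gets $|x_\ve-y_\ve|=o(\sqrt\ve)$, hence $|p_\ve|=o(\ve^{-1/2})$; under (H4) (boundedness) or even mere continuity of $c_{a,b}$, the product need not vanish and in fact can diverge. What is actually required is a Lipschitz (or at worst Dini) modulus for $x\mapsto c_{a,b}(x)$: with $|c_{a,b}(x)-c_{a,b}(y)|\le L|x-y|$ the drift becomes $\le 2L|x_\ve-y_\ve|^2/\ve\to 0$, and this is needed for \emph{every} $s\in(0,1)$, not just $s>1/2$. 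Hypothesis (H4) as written does not state it; it is implicit in Chang--Lara's $x$-regularity assumptions on the kernels.

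Your proposed remedy --- switching to the penalty $|x-y|^{2\sigma}/\ve$, $\sigma\in(1/2,s)$, and ``extracting a strict lower bound of order $\ve^{-\sigma}$ from the near-field Pucci contribution'' --- has two problems. First, the near-field contribution to $L_K v_\ve(x_\ve)-L_K u_\ve(y_\ve)$ coming from the test functions equals $\int_{|z|<r}\bigl(\delta(\varphi,x_\ve,z)-\delta(\psi,y_\ve,z)\bigr)K(z)\,dz=\int_{|z|<r}4|z|^2\ve^{-1}K(z)\,dz>0$: it is \emph{positive}, i.e. it enlarges the left-hand side of the inequality $Iv_\ve(x_\ve)-Iu_\ve(y_\ve)\ge f(x_\ve)-f(y_\ve)-\eta$ and therefore works \emph{against} producing a contradiction, not for it; and in any case you send $r\to 0$ first, so it disappears. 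Second, the genuinely coercive term in the argument is the far field, which is $\le 0$ (from the maximality of $\Phi_\ve$) and, if one can exploit $v-u\le 0$ off $\Omega$ while $v(x_\ve)-u(y_\ve)\approx M>0$, is bounded above by a fixed negative quantity $-c_0 M$ --- but that bound is $O(1)$, not $O(\ve^{-\sigma})$, and it cannot absorb a drift term that may be $o(\ve^{-1/2})$. (Even getting the strict $-c_0 M$ cleanly requires a little care because $u$ is only LSC and $v$ only USC across $\partial\Omega$.) So the claimed ``positive coercive term'' is not there, and $s>1/2$ plays no role in controlling the drift in this scheme.

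A cleaner reduction, consistent with the tools the paper actually assembles, is the following: by Theorem~\ref{teo.sol} (proved in \cite{CL} and encapsulating precisely the doubling argument and its treatment of the drift), $Iu\le f$ and $Iv\ge f$ imply $\MM^-_\LL(u-v)\le 0$ in $\Omega$. If $w:=u-v$ had a negative interior minimum at $x_0$, test with the constant $\varphi\equiv w(x_0)$ from below: $\nabla\varphi(x_0)=0$ kills the drift, the near field vanishes, and the far field $\int\delta(w,x_0,y)K(y)\,dy$ is nonnegative because $w(x_0)$ is a global minimum (using $w\ge 0$ outside $\Omega$). The viscosity inequality then forces $\delta(w,x_0,\cdot)\equiv 0$ for a kernel bounded below by $\lam|y|^{-n-2s}$, i.e. $w\equiv w(x_0)<0$, contradicting $w\ge 0$ off $\Omega$. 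This route dodges the drift difficulty at the final step, but of course transfers the burden to Theorem~\ref{teo.sol}, which still requires the appropriate $x$-continuity of the coefficients that your sketch is missing.
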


Also, a result related to the difference of solutions is proved in in \cite{CL} .

\begin{thm} \label{teo.sol}
Let $I$ be a uniformly elliptic operator with respect to $\LL$, and $f$, $g$ continuous functions. Given $u\in LSC(\Omega)\cap L^1(\omega_s)$ and $v\in USC(\Omega)\cap L^1(\omega_s)$ such that $Iu\leq f$ and $Iv\geq g$ hold in $\Omega$ in the viscosity sense, then $\MM_\LL^-(u-v)\leq f-g$ also holds in $\Omega$ in the viscosity sense.
\end{thm}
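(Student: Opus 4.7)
The plan is to follow the standard sup/inf--convolution regularization strategy used for fully nonlinear integro-differential equations, adapted to the current class $\LL$ with drift. First I would regularize $u$ and $v$ by inf- and sup-convolution:
\[
u^\varepsilon(x)=\inf_{y}\Big\{u(y)+\tfrac{|x-y|^2}{\varepsilon}\Big\},\qquad
v^\varepsilon(x)=\sup_{y}\Big\{v(y)-\tfrac{|x-y|^2}{\varepsilon}\Big\}.
\]
Arguing as in the purely integro-differential case (Caffarelli--Silvestre) one checks that $u^\varepsilon$ is semiconcave and still satisfies $Iu^\varepsilon\le f^\varepsilon$ in the viscosity sense on a subdomain $\Omega'\Subset\Omega$, with $f^\varepsilon$ converging uniformly to $f$; symmetrically $v^\varepsilon$ is semiconvex and $Iv^\varepsilon\ge g^\varepsilon$ with $g^\varepsilon\to g$ uniformly. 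The drift term $c_{a,b}\cdot\nabla$ causes no extra difficulty in this step, since semiconcavity implies Lipschitz regularity and the standard convolution identities apply to the gradient as well. Moreover $u^\varepsilon\uparrow u$ and $v^\varepsilon\downarrow v$, with good tail control in $L^1(\omega_s)$ since both functions are bounded on bounded sets.

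Next, since $w^\varepsilon:=u^\varepsilon-v^\varepsilon$ is the sum of two semiconcave functions, Aleksandrov's theorem gives that $u^\varepsilon$ and $v^\varepsilon$ are punctually $C^{1,1}$ almost everywhere, and at such a point $x$ both $Iu^\varepsilon(x)$ and $Iv^\varepsilon(x)$ are classically defined and the viscosity inequalities reduce to pointwise ones. The hypothesis that $I$ is elliptic with respect to $\LL$ then yields
\[
\MM^{-}_{\LL}(u^\varepsilon-v^\varepsilon)(x)\le Iu^\varepsilon(x)-Iv^\varepsilon(x)\le f^\varepsilon(x)-g^\varepsilon(x)
\]
for a.e.\ $x\in\Omega'$.

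The third step is to promote this a.e.\ pointwise inequality to a viscosity inequality for $w^\varepsilon$. If $\varphi\in C^2$ touches $w^\varepsilon$ from above at a point $x_0$, semiconcavity allows one to produce a sequence of approximating touching points $x_k\to x_0$ at which $w^\varepsilon$ is twice differentiable and the a.e.\ inequality holds; using the explicit structure of $\MM_{\LL}^{-}$ as an infimum of linear operators $L_K+c\cdot\nabla$ with bounded kernels and drifts one passes this information to $\varphi$ at $x_0$ by dominated convergence on the nonlocal tails, obtaining $\MM_{\LL}^{-}(\tilde\varphi)(x_0)\le f^\varepsilon(x_0)-g^\varepsilon(x_0)$. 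Finally, I would let $\varepsilon\to 0$: stability of viscosity (super)solutions for Pucci-type nonlocal operators under the monotone convergence $u^\varepsilon\uparrow u$, $v^\varepsilon\downarrow v$ together with the uniform convergence $f^\varepsilon\to f$, $g^\varepsilon\to g$ yields the desired $\MM^{-}_{\LL}(u-v)\le f-g$ in the viscosity sense.

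The main obstacle is the third step, i.e.\ the passage from an a.e.\ pointwise Pucci inequality to a viscosity one for the nonlocal operator. In the local second-order setting this is essentially Jensen's lemma, but here the operator depends on the whole function and one must track what happens outside the test neighborhood. The key is to use that $\MM_{\LL}^{-}$ is a pointwise infimum of linear operators, so an inequality $\MM^{-}_\LL w(x_k)\le h(x_k)$ at nearby regular points $x_k$ can be tested against the linear members of the family and lifted to $x_0$ using continuity of the nonlocal tails, exploiting that $u^\varepsilon$ and $v^\varepsilon$ lie uniformly in $L^1(\omega_s)$.
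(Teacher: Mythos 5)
The paper does not give its own proof of this statement: Theorem~\ref{teo.sol} is stated as a known result and the text cites Chang--Lara~\cite{CL} for it. Your plan follows the standard sup/inf-convolution route of Caffarelli--Silvestre/Jensen, which is indeed the strategy used in the cited reference, and the overall architecture (regularize, get a.e.\ $C^{1,1}$ points via Alexandrov, use the pointwise ellipticity inequality, lift back to viscosity, pass to the limit) is correct. The point you flag as ``the main obstacle'' is also the right one.

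However, there is one step you dismiss too quickly. You write that the drift term ``causes no extra difficulty, since semiconcavity implies Lipschitz regularity.'' This is misleading. The inf-convolution $u^\varepsilon$ of a merely LSC function has local Lipschitz constant of order $\varepsilon^{-1/2}$, not uniformly bounded; the argmin shift $y_0(x)$ satisfies $|x-y_0|\lesssim\sqrt{\varepsilon}$ and $|\nabla u^\varepsilon(x)|=2|x-y_0|/\varepsilon\lesssim\varepsilon^{-1/2}$. Because the operator $I$ in~\eqref{opp} has an $x$-dependent drift coefficient $c_{a,b}(x)$, the super-/sub-solution property transferred to $u^\varepsilon$ and $v^\varepsilon$ at a twice-differentiability point $x$ carries the coefficients evaluated at two \emph{different} shifted base points $y_0(x)$ and $z_0(x)$. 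When one subtracts the two pointwise inequalities and tries to invoke ellipticity with respect to $\LL$, one is left with an error of the form $(c_{a,b}(y_0)-c_{a,b}(z_0))\cdot\nabla v^\varepsilon(x)$, of size $\omega(\sqrt{\varepsilon})\,\varepsilon^{-1/2}$, which does not obviously tend to zero for coefficients that are only bounded (hypothesis (H4)) or even merely uniformly continuous. The purely nonlocal part of $L_{K_{a,b}}$ is translation invariant and causes no such trouble — that is exactly the Caffarelli--Silvestre situation — but the drift breaks translation invariance, and this is the genuinely new feature here compared to the convolution-based proof you are imitating. Your plan needs either an explicit regularity hypothesis on $x\mapsto c_{a,b}(x)$ strong enough to kill this error, or a modified argument (for instance, proving the Pucci inequality for the translation-invariant extremal $\MM^-_\LL$ directly by freezing the drift at a single point and absorbing the coefficient oscillation into the $\pm c^{+}|\nabla\cdot|$ term) rather than the assertion that the drift is harmless.

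A smaller remark: make sure your sup/inf-convolution choice matches the sign conventions of the paper's Definition~2.2. As written there, the test condition for super-solutions reads ``$u<\varphi$'' (touching from above), which would call for sup-convolution of $u$ rather than inf-convolution; but the comparison principle in Lemma~\ref{lema.comp} only makes sense with the standard convention (super-solutions touched from below), so this appears to be a typo in the paper that your regularization implicitly corrects.
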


By using the Perron's method together with the comparison principle it follows the existence and uniqueness of solution for the operator $I$ in the viscosity sense, \cite{CL}.

\begin{thm} \label{t2.6} Given a domain $\Omega\subset \R^n$ with the exterior ball condition, a continuous operator $I$ with respect to $\LL$ and $f$ and $g$ bounded and continuous functions (in fact $g$ only need to be assumed continuous at $\partial \Omega$), then the Dirichlet problem
\begin{equation} \label{main.eq}
\begin{cases}
	Iu=f&\qquad \mbox{ in }\Omega,\\
	u=g  &\qquad \mbox{ in }\R^n\setminus \Omega,
\end{cases}	
\end{equation}
has a unique viscosity solution $u$.

\end{thm}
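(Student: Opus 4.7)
The plan is to deduce uniqueness from the comparison principle Lemma~\ref{lema.comp} and to obtain existence by Perron's method adapted to the nonlocal framework, with boundary continuity enforced through barriers built from the exterior ball condition. Uniqueness is immediate: if $u_1,u_2\in C(\bar{\Omega})\cap L^1(\omega_s)$ both satisfy \eqref{main.eq}, then $u_1=u_2=g$ on $\R^n\setminus\Omega$, so Lemma~\ref{lema.comp} applied in both directions forces $u_1\equiv u_2$.

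For existence I would first, at each $x_0\in\partial\Omega$, produce classical sub/supersolutions $\underline w_{x_0},\overline w_{x_0}\in C(\R^n)\cap L^1(\omega_s)$ that equal $g(x_0)$ at $x_0$, bracket $g$ in $\R^n\setminus\Omega$, and satisfy $I\underline w_{x_0}\ge f$, $I\overline w_{x_0}\le f$ on a neighbourhood of $x_0$. A natural template is a shifted and signed version of the function $\xi$ in \eqref{f.xi} with $\beta\in(s,1)$: the Pucci computation gives $\MM_\KK^\pm \xi(x)\sim C\,d(x)^{\beta-2s}$ near $\partial\Omega$, which dominates the drift $c^+|\nabla\xi|\sim c^+\beta\,d(x)^{\beta-1}$ precisely because $s>\tfrac12$. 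Gluing these pointwise barriers against a large constant in the interior yields global functions $\underline w\le g\le \overline w$ that have the correct inequality for $I$ in $\Omega$. I then define the Perron class
\[
\FF=\bigl\{v\in USC(\R^n)\cap L^1(\omega_s)\,:\, Iv\ge f \text{ in }\Omega,\; \underline w\le v\le \overline w \text{ in }\R^n,\; v\le g \text{ in }\R^n\setminus\Omega\bigr\}
\]
and set $u(x)=\sup_{v\in\FF}v(x)$. The class contains $\underline w$ and is bounded by $\overline w$, so $u$ is well defined; the usual viscosity arguments, transplanted to the nonlocal setting as in \cite{CS2,BCI}, show that the upper semicontinuous envelope $u^*$ is a subsolution and the lower semicontinuous envelope $u_*$ is a supersolution of $Iw=f$ in $\Omega$. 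The only genuinely nonlocal step is the stability check when a test function $\varphi$ touches $u^*$ at an interior point: one evaluates $I$ on the truncated function $v=\varphi$ on $B_r(x_0)$, $v=u^*$ outside, and passes to $r\downarrow 0$ by dominated convergence against the weight $\omega_s$, which is legitimate because every admissible $v$ is pointwise controlled by $\overline w\in L^1(\omega_s)$. The barrier inequalities force $u^*=u_*=g$ on $\partial\Omega$ and in $\R^n\setminus\Omega$; one final application of Lemma~\ref{lema.comp} yields $u^*\le u_*$ throughout $\R^n$, hence $u:=u^*=u_*$ is continuous and solves \eqref{main.eq}.

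The main obstacle I foresee is the boundary barrier construction for the drift-perturbed nonlocal operator. The classical fractional-Laplacian barrier $(d^+)^s$ has gradient blowing up like $d^{s-1}$, which cannot be absorbed by $c_{a,b}\cdot\nabla$ unless one has enough room in the exponent; this is the precise reason for the standing hypothesis $s\in(\tfrac12,1)$, and the Pucci estimate on $d^\beta$ with $\beta\in(s,1)$ that is carried out in the boundary regularity analysis of Section~3 is exactly what makes the present barrier work.
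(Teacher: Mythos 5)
The paper itself gives no proof of this theorem: it simply records that existence and uniqueness follow ``by using the Perron's method together with the comparison principle'' and cites Chang--Lara~\cite{CL}. Your proposal is therefore a fleshed-out version of an argument the paper only sketches by reference, and at the level of structure it is the right one: uniqueness from the comparison principle, existence via a Perron envelope, boundary continuity enforced by barriers built from the exterior ball condition, and the correct identification of $s>\tfrac12$ as the threshold ensuring the Pucci term $\sim d^{\beta-2s}$ dominates the drift $\sim d^{\beta-1}$.

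There is, however, a genuine error in your barrier construction. You propose $\beta\in(s,1)$, but the upper barrier — the supersolution $\overline w = g(x_0)+\varepsilon+C\,\xi$ that pins the Perron solution down to $g(x_0)$ at the boundary point — requires $\mathcal M_\LL^+\overline w\le f$ near $x_0$, hence $\MM^+_\KK d^\beta$ must be \emph{negative} and large in magnitude. By Lemma~\ref{lema.beta} this happens only for $\beta\in(0,\beta_1)$ with $\beta_1\le s$; for $\beta>\beta_1$ one instead has $\MM^+_\KK d^\beta\ge Cd^{\beta-2s}\to+\infty$, so $d^\beta$ is a subsolution of the extremal operator and the supersolution inequality fails outright. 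This is exactly why the boundary estimate of Section~3 (the lemma giving $|u|\le Cd^\beta$) fixes $\beta\in(0,\beta_1)$. The interval $(\beta_2,2s)$ with $\beta_2\ge s$, which is what your ``$\beta\in(s,1)$'' seems to be recalling, is the range used for a \emph{lower} barrier and for the (H) condition in Lemma~\ref{lema.aux}; it does not produce a supersolution. You also tacitly replace the exterior-ball distance, which is what the hypothesis of Theorem~\ref{t2.6} actually provides, by the boundary distance $d(x)$, which implicitly assumes more regularity of $\partial\Omega$ than the statement requires; the correct barrier should be a power of the distance to the osculating exterior ball, as in~\cite{CL}.
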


A stability result for sub-solutions is stated in \cite{CL}. Naturally, a corresponding result it holds for super-solutions by changing $u$ to $-u$. As a corollary, the stability under uniform limits follows.
\begin{prop} \label{prop.cont}
Let $\{f_k\}$ be a sequence of continuous functions and $\{I_k\}$ a sequence of elliptic operators with respect to $\LL$. Let $u_k\in LSC(\Omega)\cap L^1(\omega_s)$ be a sequence of   functions in $\R^n$  such that
\begin{enumerate}
\item[(a)] $I u_k =f_k \mbox{ in } \Omega$,
\item[(b)] $u_k \to u \mbox{ locally uniformly in}\,\, \Omega$,
\item[(c)] $u_k \to u \mbox{ in } L^1(\omega_s)$,
\item[(d)] $f_k \to f \mbox{ locally uniformly in } \Omega$,
\item[(e)] $|u_k(x)|\leq C \mbox{ for every }x\in \Omega$.
\end{enumerate}
Then $Iu=f$ in the viscosity sense in $\Omega$.
\end{prop}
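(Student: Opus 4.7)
The proposition is the standard stability of viscosity solutions under uniform/weighted-$L^1$ limits, adapted to the drift-perturbed non-local setting. The strategy is the same as Lemma 4.5 in \cite{CS2}; I would verify that $u$ satisfies both the sub- and super-solution inequalities at an arbitrary $x_0\in\Omega$, and by symmetry sketch only one direction. Fix a neighborhood $V$ of $x_0$ with $\bar V\subset\Omega$ and a test function $\varphi\in C^2(\bar V)$ realizing the strict contact with $u$ at $x_0$ as in the paper's definition. Let $v$ be the replacement of $u$ by $\varphi$ in $V$; the goal is to deduce the corresponding inequality between $Iv(x_0)$ and $f(x_0)$ from the information on $u_k$.

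\textbf{Contact points persist.} Using local uniform convergence $u_k\to u$ on $\bar V$ together with the strictness of the contact, the standard viscosity-stability argument produces $x_k\in V$ with $x_k\to x_0$ at which $u_k-\varphi$ attains the corresponding extremum. Then $\varphi_k:=\varphi+(u_k(x_k)-\varphi(x_k))$ is a $C^2$ contact function for $u_k$ at $x_k$. Let $v_k$ be the replacement of $u_k$ by $\varphi_k$ on a fixed small ball $B_{r/2}(x_k)\subset V$. Applying hypothesis (a) to $u_k$ at $x_k$ gives, for $k$ large, $Iv_k(x_k)$ on the correct side of $f_k(x_k)$. Since $f_k(x_k)\to f(x_0)$ by (d), the remaining task is to prove that $Iv_k(x_k)\to Iv(x_0)$.

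\textbf{Passing to the limit.} For each operator $L_{K_{a,b}}+c_{a,b}\cdot\nabla$ appearing inside the $\inf_a\sup_b$ defining $I$, split
\[
	L_{K_{a,b}}v_k(x_k) = \int_{B_{r/2}(x_k)} \delta(\varphi_k, x_k, y)\, K_{a,b}(y)\, dy + \int_{\R^n\setminus B_{r/2}(x_k)} \delta(u_k, x_k, y)\, K_{a,b}(y)\, dy.
\]
The local part converges as $x_k\to x_0$ via the uniform second-order Taylor bound for $\varphi$ combined with the kernel bound (H2). The tail part is controlled by combining (c), (e) and (H2): the $L^1(\omega_s)$ convergence $u_k\to u$ controls $\int_{|y|\geq r/2}|u_k-u|(x_k+y)\,|y|^{-(n+2s)}\,dy$ uniformly in $x_k$, while (e) together with local uniform convergence absorbs the constant shifts $-2v_k(x_k)$ in $\delta(v_k,x_k,y)$. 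Finally, (H4) and the smoothness of $\varphi$ make $c_{a,b}\cdot\nabla\varphi_k(x_k)$ converge uniformly in $(a,b)$. All of these convergences being uniform in the parameters, we may interchange the $\inf_a\sup_b$ with the limit $k\to\infty$, obtaining the required inequality for $Iv(x_0)$.

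\textbf{Main difficulty.} The crux is the \emph{uniformity in $(a,b)$} of each convergence in Step 3, which is necessary because $I$ is defined through an $\inf\sup$ over parameters; without it one cannot swap the parameter operations with the sequential limit. This uniformity is exactly what the combination of the weighted $L^1(\omega_s)$ convergence in (c), the uniform bound (e), and the kernel control (H2) is designed to provide. The drift term $c_{a,b}\cdot\nabla$ is harmless thanks to (H4) and the $C^2$ regularity of the test function, so the only genuine novelty beyond the purely non-local stability of \cite{CS2,CL} is the gradient perturbation, whose treatment is immediate.
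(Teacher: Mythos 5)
Your proposal is correct and reconstructs precisely the argument the paper invokes: the paper itself gives no proof of Proposition~\ref{prop.cont}, but simply cites the stability result of Chang-Lara \cite{CL} (itself an adaptation of Lemma~4.5 in \cite{CS2}) and notes the corollary for solutions, which is exactly the Caffarelli--Silvestre-style stability argument you sketch, with the local/tail split, the $L^1(\omega_s)$ control of the tail, and the drift handled via (H4) and the $C^2$ regularity of the test function. (One minor notational slip: in the decomposition the domain of the local integral should be $\{|y|<r/2\}$ rather than $B_{r/2}(x_k)$, since the integration variable is $y$, not the spatial point $x_k+y$.)
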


Another useful result to be established is the following version of the strong maximum principle.
 
\begin{thm} [Strong Maximum Principle] \label{teo.pm}
Let $u\in LSC(\Omega)\cap L^1(\omega_s)$ be a viscosity super-solution of $-\mathcal{M}_\LL^- u\geq  0$, $u\ge 0$ in $\R^n$. Then either $u>0$ in $\Omega$ or $u\equiv 0$ in $\Omega$.
\end{thm}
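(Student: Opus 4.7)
The plan is to argue by contrapositive: assuming some $x_0 \in \Omega$ with $u(x_0) = 0$, I will show $u \equiv 0$ in $\Omega$. Since $u \geq 0$ in $\R^n$, such an $x_0$ is a global minimum of $u$, and the nonlocal nature of $\MM_\LL^-$ will propagate this zero across all of $\Omega$: testing the viscosity super-solution inequality with the constant zero function will force $u$ to vanish almost everywhere outside any fixed ball around $x_0$.

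Fix $r>0$ with $\overline{B_r(x_0)} \subset \Omega$, set $V = B_r(x_0)$, and take $\varphi \equiv 0$ on $\overline{V}$. The non-strict touching $u \geq \varphi$ on $V$ with $u(x_0) = \varphi(x_0) = 0$ is admissible thanks to the relaxation noted in the remark after the viscosity definition. The associated replacement
\[
    v(x) =
    \begin{cases}
        0, & x \in V, \\
        u(x), & x \in \R^n \setminus V,
    \end{cases}
\]
is $C^\infty$ near $x_0$, satisfies $0 \le v \le u$ so that $v \in L^1(\omega_s)$, and has $\delta(v,x_0,y) = 0$ for small $|y|$, killing the singularity at the origin. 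Hence $\MM_\LL^- v(x_0)$ is defined in the classical pointwise sense, and the viscosity condition $-\MM_\LL^- u \geq 0$ yields $\MM_\LL^- v(x_0) \leq 0$.

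Since $Dv(x_0)=0$, the gradient term in $\MM_\LL^- = \MM_\KK^- - c^+|D\cdot|$ drops out and
\[
    0 \ge \MM_\LL^- v(x_0) = \int_{\R^n} \frac{S^-\bigl(\delta(v,x_0,y)\bigr)}{|y|^{n+2s}}\,dy.
\]
But $v \ge 0$ on $\R^n$ and $v(x_0)=0$ force $\delta(v,x_0,y) = v(x_0+y) + v(x_0-y) \ge 0$, and consequently $S^-\bigl(\delta(v,x_0,y)\bigr) = \lambda\,\delta(v,x_0,y) \ge 0$. A nonnegative integrand with nonpositive integral must vanish a.e., and the sign condition $v \ge 0$ then forces $v \equiv 0$ a.e.\ in $\R^n$; equivalently, $u \equiv 0$ a.e.\ on $\R^n \setminus V$.

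Lower semicontinuity of $u$ makes $\{u>0\}\cap\Omega$ an open set, and any open subset of $\R^n$ with Lebesgue measure zero is empty, so $u \equiv 0$ on $\Omega \setminus \overline{B_r(x_0)}$. Letting $r \to 0^+$ gives $u \equiv 0$ on $\Omega \setminus \{x_0\}$, and since $u(x_0)=0$ we conclude $u \equiv 0$ on $\Omega$. The only substantive point of care is the test-function convention: one must invoke the non-strict relaxation from the remark to employ the constant zero as a valid test function; once that is granted, the positivity of the kernel (through $S^-(\delta)\ge 0$) does all the real work.
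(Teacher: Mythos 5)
The paper does not actually spell out a proof of this strong maximum principle; it simply says the argument is ``very similar as Lemma~7 in \cite{BDGQ}'' and omits it. Your argument is complete, correct, and is precisely the standard nonlocal strong-maximum-principle technique used in that reference: you touch a global-minimum point with the constant test function, note that the replacement $v$ vanishes identically near $x_0$ so that the gradient term and the near-origin singularity both drop out, and then exploit the strict positivity of the kernel weight $|y|^{-(n+2s)}$ to force $\delta(v,x_0,\cdot)\equiv 0$ a.e., hence $u=0$ a.e.\ outside $B_r(x_0)$. The lower-semicontinuity argument then upgrades the a.e.\ statement to a pointwise one in $\Omega$, and letting $r\to 0^+$ covers all of $\Omega\setminus\{x_0\}$.

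One point worth flagging explicitly, which you handled correctly but which a careless reader could trip on: Definition~2.2 in the paper, as literally written, states $u(x)<\varphi(x)$ for a super-solution test function, which would make $\varphi$ touch $u$ from \emph{above}. This is clearly a typographical slip in the paper --- for the degenerate-elliptic convention $Iu\leq f$, a super-solution must be tested from \emph{below} (so that the replacement $v\leq u$ decreases $\mathcal{M}^-_\LL$, making a classical super-solution a viscosity one), and the paper's own Remark~2.3(1), which writes the relaxed condition as $u(x)\geq\varphi(x)$, confirms the intended convention. Your proof quietly uses the correct convention and appeals to the remark for the non-strict touching, so no gap; I mention it only because the literal text of Definition~2.2 would make your choice of $\varphi\equiv 0$ inadmissible, and the justification really rests on the Caffarelli--Silvestre convention that the paper evidently intends.
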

\begin{proof}
The proof is very similar as Lemma 7 in \cite{BDGQ}, and we omit it here.
\end{proof}

\medskip

\section{Regularity}
In this section we prove regularity up to the boundary for the equation
\begin{equation} \label{main.eq.1}
\begin{cases}
	-Iu=f &\qquad \mbox{ in }\Omega\\
	u=0  &\qquad \mbox{ in }\R^n\setminus \Omega.
\end{cases}	
\end{equation}
As usual, if for a fixed $\delta>0$ small enough we denote $\Omega_\delta$ a $\delta-$neighborhood of $\Omega$,  the global regularity follows by studying  the regularity both in $\Omega \setminus \Omega_\delta$ and $\Omega_\delta$.
Nevertheless, before dealing with the regularity we prove lower and upper bounds of the  extremal Pucci operators defined in \eqref{mk} for powers of the distance to the boundary. In order to state such result, we remember the following result proved in Proposition 2.7, \cite{ROS}. Given $\beta\in (0,2s)$, we  denote   $\varphi^\beta:\R \to \R$ the function 
\begin{equation} \label{funcfi}
	\varphi^\beta(x):=(x_+)^\beta.
\end{equation}

\begin{lema} \label{lema.beta}
Given $s\in (0,1)$, for $\beta\in (0,2s)$ the function \eqref{funcfi} satisfies
	\begin{align*}
	\MM^+_\KK (\varphi^\beta) = c^+(\beta) x^{\beta-2s} \qquad \mbox{and}  \qquad 	\MM^-_\KK (\varphi^\beta) = c^-(\beta) x^{\beta-2s} \qquad \mbox{ in }\{x>0\}.
	\end{align*}
	The constants $c^+$ and $c^-$ depend on $s$, $\beta$ and $n$, and are continuous as functions of the variables $s$ and $\beta$ in $\{0<s\leq 1,\,  0<\beta<2s\}$. Moreover, there are $\beta_1\leq \beta_2$ in $(0,2s)$ such that
	$$
		c^+(\beta_1)=0 \quad \mbox{ and } \quad c^-(\beta_2)=0.
	$$
	Furthermore, 
	$$
		c^+(\beta)<0  \mbox{ if } (0,\beta_1), \qquad c^+(\beta)>0  \mbox{ if } (\beta_1,2s)
	$$
	$$
		c^-(\beta)<0  \mbox{ if } (0,\beta_2), \qquad c^-(\beta)>0  \mbox{ if } (\beta_2,2s).
	$$
	In particular, for the fractional Laplacian $-(-\Delta)^s$ it holds that $\beta_1\leq s \leq \beta_2$.
\end{lema}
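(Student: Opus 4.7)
The plan is to deduce the form $\MM^\pm_\KK(\varphi^\beta)(x) = c^\pm(\beta)\, x^{\beta-2s}$ from scaling, and then to analyze the scalars $c^\pm(\beta)$ as continuous functions of $\beta$ via an intermediate-value argument; the sign patterns and the ordering of the roots will fall out once monotonicity in $\beta$ is established. The assertion is Proposition 2.7 of \cite{ROS}, so what follows is a sketch of the main steps.

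First, I would exploit the fact that $\varphi^\beta$ is $\beta$-homogeneous and that the class $\KK$ is closed under the rescaling $K(y)\mapsto r^{n+2s}K(ry)$. Substituting $y=xz$ in the integral defining $L_K\varphi^\beta(x)$, for $x>0$ these two homogeneities combine to yield $L_K\varphi^\beta(x) = x^{\beta-2s}\, L_{\widetilde K}\varphi^\beta(\mathbf 1)$ for some $\widetilde K\in\KK$, so taking $\sup$ and $\inf$ over $K\in\KK$ gives the stated identities with
$$c^\pm(\beta) = \int_{\R^n} \frac{S^\pm\bigl(\delta(\varphi^\beta,\mathbf 1,y)\bigr)}{|y|^{n+2s}}\,dy.$$
Near the origin the integrand is $O(|y|^{2-n-2s})$ (since $\varphi^\beta$ is $C^2$ near $\mathbf 1$), and near infinity it is $O(|y|^{\beta-n-2s})$; the latter bound is exactly where the hypothesis $\beta<2s$ enters. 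A dominated-convergence argument then gives continuity of $c^\pm$ jointly in $(s,\beta)$ on $\{0<s\le 1,\ 0<\beta<2s\}$.

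To locate the zeros and their signs I would examine the endpoints. As $\beta\to 0^+$ one has $\varphi^\beta\to \mathbf 1_{\{x_1>0\}}$ pointwise, and at $\mathbf 1$ one checks $\delta\to 0$ on $\{|y|<1\}$ but $\delta\to -1$ on a set of positive measure in $\{|y|\ge 1\}$, forcing $c^\pm(\beta)<0$ for small $\beta$. As $\beta\to (2s)^-$ the positive contribution of $\delta$ from large $|y|$ dominates and $c^\pm(\beta)\to +\infty$. Continuity together with the intermediate value theorem then produces $\beta_1,\beta_2\in(0,2s)$ with $c^+(\beta_1)=c^-(\beta_2)=0$; the ordering $\beta_1\le\beta_2$ is immediate from the pointwise inequality $S^+\ge S^-$, which gives $c^+\ge c^-$ everywhere. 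The main obstacle is strict monotonicity of $c^\pm$ in $\beta$, needed for uniqueness of the zeros and for the sign statements on each subinterval: the cleanest route is to differentiate under the integral using $\partial_\beta\varphi^\beta = \varphi^\beta\log x_+$ and track how the large-$|y|$ contribution (where $\log$ grows and $S^\pm(\delta)>0$) dominates the small-$|y|$ contribution. Finally, for the bare fractional Laplacian $c^+=c^-$ reduces to the classical one-dimensional Riesz constant that vanishes precisely at $\beta=s$, and sandwiching this value between the two Pucci constants gives $\beta_1\le s\le\beta_2$.
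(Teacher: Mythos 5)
The paper does not prove this lemma at all: it is quoted verbatim from Proposition~2.7 of Ros-Oton and Serra~\cite{ROS} (the text preceding it reads ``we remember the following result proved in Proposition 2.7, \cite{ROS}''). So there is no in-paper proof to compare against, and you rightly flag this. Your sketch of the scaling reduction, the formula $c^\pm(\beta)=\int_{\R^n}\frac{S^\pm(\delta(\varphi^\beta,\mathbf 1,y))}{|y|^{n+2s}}\,dy$, the joint continuity by dominated convergence, and the endpoint behavior $c^\pm(0^+)<0$, $c^\pm\to+\infty$ as $\beta\uparrow 2s$ is all sound and consistent with the reduction to a one-dimensional integral used in~\eqref{contra-final2}.

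The genuine gap is exactly where you say the ``main obstacle'' is, and the route you propose for it will not work: $c^\pm$ cannot be strictly monotone in $\beta$, because a function that is negative near $0$ and diverges to $+\infty$ as $\beta\uparrow 2s$ must first decrease (or at least dip) before increasing. So differentiating and ``tracking how the large-$|y|$ contribution dominates'' cannot yield monotonicity; what you need is a structural property that forces at most one sign change. For $c^+$ the correct property is convexity: for each fixed $t$ the map $\beta\mapsto(1+t)_+^\beta+(1-t)_+^\beta-2$ is convex, and $S^+(r)=\Lambda r_+-\lambda r_-$ is convex and nondecreasing, so $c^+$ is convex in $\beta$; a convex function negative near $0$ and diverging to $+\infty$ has exactly one zero, giving the sign pattern. (This is essentially what the remark following Lemma~\ref{lema.m} hints at with the concavity of $C(\tau)$.) However, this argument does not transfer to $c^-$, because $S^-$ is concave nondecreasing, and the composition of a concave increasing function with a convex function has no definite convexity. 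Your sketch therefore leaves the uniqueness of $\beta_2$ (and the sign pattern of $c^-$) unestablished, and a separate argument is required there.
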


We state the behavior of the extremal operators regarding the barrier function $\xi$ defined in \eqref{f.xi}.

First we prove the following technical lemma.

\begin{lema} \label{lema.c}
 
Let $\{x_k\}_{k\in\N}\in \Omega$ be a sequence such that $d(x_k)\to 0$ as $k\to\infty$ and let us denote $d_k:=d(x_k)$. Given $\beta\in (0,2)$ let the function
	$$
	g_k(z):=\left(\frac{d(x_k+d_k z)}{d_k}\right)^{\beta}+
			\left(\frac{d(x_k-d_k z)}{d_k}\right)^{\beta}-2.
	$$
	Then
	\begin{equation*}
			\lim_{k\to \infty}\int_{\R^n} \frac{  S^+ (g_k(z)) }{ |z|^{n+2s} }  dz  = c^+(\beta),  \qquad 
			\lim_{k\to \infty}\int_{\R^n} \frac{  S^- (g_k(z)) }{ |z|^{n+2s} }  dz  = c^-(\beta)	,		
	\end{equation*}	
	where $c^\pm(\beta)$ are given in Lemma \ref{lema.beta}.
 
\end{lema}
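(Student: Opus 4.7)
The strategy is to apply the dominated convergence theorem after identifying the pointwise limit of $g_k$, and to recognize the resulting integral through Lemma \ref{lema.beta}. Since $d_k=d(x_k)\to 0$ and $\bar\Omega$ is compact, after passing to a subsequence we may assume $x_k\to x_0\in\partial\Omega$. For $k$ large, the $C^2$ regularity of $\partial\Omega$ gives a unique nearest point $y_k\in\partial\Omega$ to $x_k$; let $\nu_k$ denote the inward unit normal at $y_k$, so that $\nabla d(x_k)=\nu_k$. Extracting a further subsequence we may suppose $\nu_k\to\nu$ for some unit vector $\nu$. A $C^2$ Taylor expansion of $d$ at $x_k$ then gives
\[
\tilde d_k(z):=\frac{d(x_k+d_kz)}{d_k}=1+z\cdot\nu_k+O(d_k|z|^2)
\]
whenever the right-hand side is positive, while if $1+z\cdot\nu<0$ the same expansion places $x_k+d_kz$ outside $\bar\Omega$ for $k$ large, making $d(x_k+d_kz)=0$ by convention. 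In either case, for every $z\in\R^n$ with $1+z\cdot\nu\ne 0$, $\tilde d_k(\pm z)\to (1\pm z\cdot\nu)_+$, so
\[
g_k(z)\longrightarrow g(z):=(1+z\cdot\nu)_+^\beta+(1-z\cdot\nu)_+^\beta-2
\]
pointwise a.e.

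The next step is to exhibit a $k$-independent majorant of $|g_k|$ that is integrable against $|z|^{-(n+2s)}$. For $|z|\le 1/2$ one has $\tilde d_k(\pm z)\ge 1/2$ and $\tilde d_k(0)=1$, so the map $z\mapsto\tilde d_k(z)^\beta$ is smooth near the origin and a direct chain-rule computation shows that its $z$-Hessian is bounded uniformly in $k$: the factor $d_k^{2-\beta}$ coming from the rescaling cancels the factor $d_k^{\beta-2}$ arising from $\partial^2(d^\beta)\sim d^{\beta-2}$ at a point where $d\sim d_k$. Hence $|g_k(z)|\le C|z|^2$ on $\{|z|\le 1/2\}$. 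For $|z|$ large, the Lipschitz continuity of $d$ with constant $1$ yields $\tilde d_k(z)\le 1+|z|$, whence
\[
|g_k(z)|\le 2(1+|z|)^\beta+2.
\]
Since $s<1$ the first estimate is integrable near the origin, and since $\beta<2s$ (the natural range inherited from Lemma \ref{lema.beta}) the second is integrable at infinity. Dominated convergence then gives
\[
\int_{\R^n}\frac{S^\pm(g_k(z))}{|z|^{n+2s}}\,dz\longrightarrow\int_{\R^n}\frac{S^\pm(g(z))}{|z|^{n+2s}}\,dz.
\]

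Finally, to identify the limit with $c^\pm(\beta)$, use the rotation invariance of $|z|^{-(n+2s)}\,dz$ to rotate coordinates so that $\nu=e_n$; in these coordinates $g(z)=(1+z_n)_+^\beta+(1-z_n)_+^\beta-2=\delta(\Phi,e_n,z)$ with $\Phi(x):=(x_n)_+^\beta$, the function $\varphi^\beta$ of \eqref{funcfi} acting in the normal direction. The limit integral is therefore $\MM^\pm_\KK(\Phi)(e_n)$, which by Lemma \ref{lema.beta} evaluates to $c^\pm(\beta)\cdot 1^{\beta-2s}=c^\pm(\beta)$. Since the limit does not depend on the extracted subsequence, the full sequence converges, yielding the claim. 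The main obstacle is the scale-invariant bound $|g_k(z)|\le C|z|^2$ near the origin: it is the estimate where the boundary degeneracy of $d^\beta$ must be matched exactly by the rescaling factor $d_k$, and without it dominated convergence would fail at the singularity of the kernel.
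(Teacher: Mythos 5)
Your proof is correct and uses the same two key estimates as the paper: the scale-invariant Taylor/Hessian bound $|g_k(z)|\le C|z|^2$ near the origin (where the $d_k^{2-\beta}$ rescaling cancels the $d^{\beta-2}$ degeneracy) and the Lipschitz bound $|g_k(z)|\le 2(1+|z|)^\beta+2$ at infinity, plus identification of the limit via rotation invariance and Lemma \ref{lema.beta}. The only difference is organizational: the paper splits $\R^n$ into three regions ($|z|\le\eta$, $\eta\le|z|\le L$, $|z|\ge L$) and combines the pieces by a manual $\varepsilon$-argument, whereas you assemble the same two bounds into a single $k$-independent integrable majorant and invoke dominated convergence once, which is a slightly cleaner packaging of the identical idea.
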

\begin{proof}
 
	Given $L$ and $\eta$ fixed positive values, we split the  integral involving $S^+$ as follows
	\begin{align*}
		\int_{\R^n} \frac{  S^+ (g_k(z)) }{ |z|^{n+2s} } dz&=
		\int_{|z|\geq L} \frac{S^+(g_k(z))}{|z|^{n+2s}}dz + \int_{|z|\leq \eta }\frac{S^+(g_k(z))}{|z|^{n+2s}}dz + \int_{\eta\leq |z|\leq L}\frac{S^+(g_k(z))}{|z|^{n+2s}}dz\\
		&:= I_1 + I_2+ I_3.
	\end{align*}
	Observe that
	\begin{align*}
			|I_1| &\leq  \int_{|z|\geq L} \frac{|S^+(g_k(z))|}{|z|^{n+2s}}\,dz \leq \Lambda 
		\int_{|z|\geq L} \frac{g_k^+(z)}{|z|^{n+2s}} dz + \lam \int_{|z|\geq L} \frac{g_k^-(z)}{|z|^{n+2s}} dz \\
		& \leq \Lambda \int_{|z|\geq L} \frac{|g_k(z)|}{|z|^{n+2s}}  dz :=\Lambda |I_1'|.
	\end{align*}
	Let us deal with $|I'_1|$. Observe that when $x_n+d_n  z\in \Omega$, we have by the Lipschitz property of $d$ that $d(x_k+d_k z) \le d_k (1+|z|)$.
	Of course, the same is true when $x_k +d_k z \not\in \Omega$ and it similarly follows that $d(x_k-d_k z)
	\le d_k (1+|z|)$. Thus, we obtain for large $k$
	\begin{equation}\label{ineq1}
		\displaystyle |I_1'|\le \displaystyle 2 \int_{ |z| \ge L} \frac{1+(1+|z|)^{\beta}}{|z|^{n+2s}} dz.
	\end{equation}
	Observe that the previous expression tends to zero as $L\to+\infty$.

	A similar computation leads to
	$$
		|I_2| \leq  \Lambda \int_{|z|\leq \eta} \frac{|g_k(z)|}{|z|^{n+2s}}  dz :=\Lambda |I_2'|.
	$$

	Let us deal with $|I_2'|$. Since $d$ is smooth in a neighborhood of the boundary, when $|z|\le L$
	and $x_k +d_k z\in \Omega$, we obtain by Taylor's theorem
	\begin{equation}\label{taylor}
		d(x_k + d_k z )= d_k + d_k \nabla d(x_k) z + \Theta_n(d_k,z) d_k^2 |z|^2,
	\end{equation}
	where $\Theta_k$ is uniformly bounded, i.e, $-C\leq \Theta_k \leq C$ for some positive constant $C$. Hence
	\begin{equation}\label{eq3}
		 |d(x_k+d_k z ) - (d_k + d_k \nabla d(x_k) z)| \le   C d_k^2 |z|^2.
	\end{equation}
	Now choose $\eta \in (0,1)$ small enough. Since $d(x_k)\to 0$ and $|\nabla d|=1$ in a
	neighborhood of the boundary, we can assume that
	\begin{equation}\label{extra1}
	\nabla d(x_k)\to e \hbox{ as }k\to +\infty \hbox{ for some unit vector }e.
	\end{equation}
	Without loss of generality, we may take
	$e=e_n$, the last vector of the canonical basis of $\R^n$. If we restrict $z$ further to satisfy $|z|\le \eta$,
	we obtain $1+\nabla d(x_k) z \sim 1 + z_n \ge 1-\eta>0$ for large $k$, since $|z_k| \le |z|\le \eta$.
	Therefore,   inequality \eqref{eq3} is also true when
	$x_k+d_k z\not\in \Omega$ for large $k$  (depending only on	$\eta$). Moreover, by using again Taylor's theorem
	$$
		|(1+\nabla d(x_k) z \pm  C d_k |z|^2)^{\beta} - (1+ \beta \nabla d(x_k) z)|  \leq   C  |z|^2,
	$$
	for large enough $k$. Thus from \eqref{eq3},
	$$
		\left|\left(\frac{d(x_k+d_k z)}{d_k}\right)^{\beta} - (1+ \beta \nabla d(x_k) z)\right| \leq  C |z|^2,
	$$
	for large enough $k$. A similar inequality is obtained for the term involving $d(x_k-d_k z)$, i.e., 
	$$
			\left|\left(\frac{d(x_k-d_k z)}{d_k}\right)^{\beta} - (1- \beta \nabla d(x_k) z)\right| \leq  C |z|^2,
	$$
	for large enough $k$, and consequently we deduce that
	$|g_k(z)|\leq C |z|^2$ for $k$ large enough. Therefore	
	$|I_2'|$ can be bounded as
	\begin{equation}\label{ineq2}
	\begin{array}{l}
		|I_2'| \leq  C  \int_{ |z| \le \eta} |z|^{2(1-s)-n} dz.
	\end{array}
	\end{equation}
	Observe that the previous expression tends to zero as $\eta\to 0$.

	We finally observe that it follows from the above discussion (more precisely from \eqref{taylor} and
	\eqref{extra1} with $e=e_n$) that for $\eta \le |z| \le L$
	\begin{equation}\label{extra2}
	\frac{d(x_k \pm d_k z)}{d_k} \to (1\pm z_n)_+ \qquad \hbox{as } k \to +\infty 
	\end{equation}
	and, as $k\to\infty$,  by dominated convergence we arrive at
	\begin{align} \label{eclim}
	\begin{split}
	 	\int_{\eta\leq |z|\leq L}\frac{S^+(g_k(z))}{|z|^{n+2s}}
		 dz =\int_{ \eta \le |z| \le L}  \frac{S^+( (1+z_n)_+^{\beta}+ (1-z_n)_+^{\beta}-2)}{|z|^{n+2s}} dz.
	\end{split}
	\end{align}
	In consequence, from \eqref{ineq1} and \eqref{ineq2} it follows that, as $k\to \infty$, the difference
	$$
		\left|\int_{\R^n} \frac{  S^+ (g_k(z)) }{ |z|^{n+2s} }  dz  - \int_{ \eta \le |z| \le L}  \frac{S^+( (1+z_n)_+^{\beta}+ (1-z_n)_+^{\beta}-2)}{|z|^{n+2s}} dz\right|
	$$
	can be bounded, up to a multiplicative constant independent on $L$ and $\eta$,  by
	$$
		 \int_{ |z| \ge L} \frac{1+(1+|z|)^{\beta}}{|z|^{n+2s}} dz +     \int_{ |z| \le \eta} |z|^{2(1-s)-n} dz,
	$$
	from where, as $\eta\to 0$ and $L\to \infty$, we obtain that
	\begin{equation} \label{integ.1}
			\lim_{k\to\infty}\int_{\R^n} \frac{  S^+ (g_k(z)) }{ |z|^{n+2s} }  dz  = \int_{ \R^n}  \frac{S^+( (1+z_n)_+^{\beta}+ (1-z_n)_+^{\beta}-2)}{|z|^{n+2s}} dz.
	\end{equation}

	It is well-known, with the use of Fubini's theorem and a change of variables, that the integral in the right side of \eqref{integ.1} can be rewritten as a one-dimensional integral 
	\begin{equation}\label{contra-final2}
	c^+(\beta):=\int_ \R \frac{S^+((1+t)_+^{\beta}+ (1-t)_+^{\beta}-2)}{|t|^{1+2s}} dt,
	\end{equation}
	from where the result follows.
 	
\end{proof}

The behavior of the extremal operators regarding the barrier function $\xi$ is established in the following result.

\begin{lema}  \label{lema.m}
Let $\Omega$ be a $C^2$ bounded domain in  $\R^n$, $s\in (0,1)$ and $\xi$ the function defined in \eqref{f.xi}. There exist $C,\delta>0$ such that
\begin{itemize}
\item[(a)] $\mathcal{M}^+_\KK (\xi(x)) \geq C  d^{\beta-2s}(x) \qquad \mbox{\, \,  if } \beta\in (\beta_1,2s)$,

\item[(b)] $\mathcal{M}^+_\KK (\xi(x)) \leq -C  d^{\beta-2s}(x) \qquad \mbox{ if } \beta\in (0,\beta_1)$,

\item[(c)] $\mathcal{M}^-_\KK (\xi(x)) \geq C  d^{\beta-2s}(x) \qquad \mbox{\, \, if } \beta\in (\beta_2,2s)$,

\item[(d)] $\mathcal{M}^-_\KK (\xi(x)) \leq -C  d^{\beta-2s}(x) \qquad \mbox{ if } \beta\in (0,\beta_2)$
\end{itemize}
for $x\in\Omega_\delta$, where $0<\beta_1<\beta_2<2s$ are given in Lemma \ref{lema.beta}.

\end{lema}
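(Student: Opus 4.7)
The plan is to reduce this statement to Lemma \ref{lema.c} and Lemma \ref{lema.beta} via a rescaling at points close to $\partial\Omega$. Concretely, for $x\in\Omega_\delta$ with $d:=d(x)$ small, I would change variables $y=dz$ in the integrals defining $\mathcal{M}^\pm_\KK \xi(x)$. Using that $\xi(x)=d^\beta$ and that $S^\pm$ is positively $1$-homogeneous (so factors of $d^\beta$ pass through $S^\pm$), this rewrites
$$\mathcal{M}^\pm_\KK \xi(x) = d^{-2s} \int_{\R^n} \frac{S^\pm(h_x(z))}{|z|^{n+2s}}\, dz, \qquad h_x(z):=\xi(x+dz)+\xi(x-dz)-2d^\beta,$$
converting the problem to the asymptotic analysis of a rescaled integral where $x$ approaches $\partial\Omega$.

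Next I would compare $h_x$ with the natural candidate $\tilde h_x(z):=d^\beta g_x(z)$, where $g_x$ is exactly the function from Lemma \ref{lema.c} (built from $d(\cdot)^\beta$ instead of $\xi$). By the Lipschitz property of $d$, the set where $h_x$ and $\tilde h_x$ differ is contained in $\{|z|\geq (\delta-d)/d\}$: indeed on the complementary region either $x\pm dz\in\Omega_\delta$ (where $\xi=d^\beta$) or $x\pm dz\notin\Omega$ (where both are zero). On the discrepancy set, $|h_x-\tilde h_x|$ is bounded by a constant depending only on $\Omega$, $\beta$, and $\ell$. Since $S^\pm$ is $\Lambda$-Lipschitz, this yields
$$\left|\int_{\R^n} \frac{S^\pm(h_x)-S^\pm(\tilde h_x)}{|z|^{n+2s}}\,dz\right| \leq C\int_{|z|\geq (\delta-d)/d} |z|^{-n-2s}\,dz = O\!\left((d/\delta)^{2s}\right),$$
so after multiplication by $d^{-2s}$ the error contributes only $O(\delta^{-2s})$, uniformly in $x$.

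To finish I would argue by contradiction along a sequence $x_k\in\Omega_\delta$ with $d_k:=d(x_k)\to 0$. By $1$-homogeneity, the main term becomes $d_k^{\beta-2s}\int S^\pm(g_k)/|z|^{n+2s}\,dz$, and Lemma \ref{lema.c} gives convergence of this integral to $c^\pm(\beta)$. Combined with the error bound,
$$\mathcal{M}^\pm_\KK \xi(x_k) = d_k^{\beta-2s}\bigl(c^\pm(\beta)+o(1)\bigr) + O(\delta^{-2s}).$$
Since $\beta-2s<0$ the first term blows up and dominates, so the sign of $c^\pm(\beta)$ (given by Lemma \ref{lema.beta}) forces the claimed inequality: $c^+(\beta)>0$ on $(\beta_1,2s)$ yields (a), $c^+(\beta)<0$ on $(0,\beta_1)$ yields (b), and analogously $c^-$ yields (c) and (d); failing the inequality along a subsequence would contradict this asymptotic identity.

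The main obstacle I expect is the bookkeeping in the comparison step: one must verify that the discrepancy between $\xi$ and $d^\beta$ truly localizes to the far-field region $\{|z|\gtrsim \delta/d\}$ uniformly in $x\in\Omega_\delta$, and that the tail bound $O(\delta^{-2s})$ is genuinely dominated by the singular main term $d^{\beta-2s}$. This is what forces the bounds to hold only on a small neighborhood of $\partial\Omega$ (and may require shrinking $\delta$), and it is the only place where the exterior and interior structure of $\xi$ interact nontrivially; everything else is a clean transcription of Lemmas \ref{lema.c} and \ref{lema.beta}.
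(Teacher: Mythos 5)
Your argument is correct and coincides with the paper's: rescale $y=dz$, invoke Lemmas \ref{lema.c} and \ref{lema.beta} to obtain $\MM^\pm_\KK\xi(x)=d^{\beta-2s}\left(c^\pm(\beta)+o(1)\right)+O(\delta^{-2s})$, and conclude from the sign of $c^\pm(\beta)$ together with $\beta<2s$ via the same contradiction-along-a-sequence device. The one place you add rigor is the explicit $O(\delta^{-2s})$ control of the far-field discrepancy between $\xi$ and $d^\beta$ on $\Omega\setminus\Omega_\delta$, which the paper tacitly assumes by working directly with $\MM^\pm_\KK(d^\beta)$ in place of $\MM^\pm_\KK(\xi)$ in its displays \eqref{contradiction}--\eqref{contradiction-2}.
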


\begin{proof}
 
	Let us prove (a).	By contradiction, let us assume that the conclusion of the lemma is not true. Then there exist $\beta\in
	(\beta_1,2s)$ and sequences of points $x_k\in \Omega$ with $d(x_k)\to 0$  such that
	\begin{equation}\label{contradiction}
		\lim_{k\to +\infty} d(x_k)^{2s-\beta} \MM^+_\KK( d^{\beta}(x_k)) \le 0.
	\end{equation}
	Equation \eqref{contradiction} says that
	\begin{equation}\label{contradiction-2}
	d(x_k)^{2s-\beta} \MM^+_\KK( d^{\beta}(x_k))= \int_{\R^n} \frac{S^+(\delta(d^{\beta},x_k,y))}{d_k^{\beta-2s} |y|^{n+2s}} dy \leq o(1).
	\end{equation}

	Denoting for simplicity $d_k:=d(x_k)$, and performing the change of variables $y= d_k z$, we can rewrite the  integral in \eqref{contradiction-2} as
	\begin{equation} \label{integ}
			\int_{\R^n} \frac{  S^+ (g_k(z))  }{|z|^{n+2s}}  dz,
	\end{equation}
	where
	$$
	g_k(z):=\left(\frac{d(x_k+d_k z)}{d_k}\right)^{\beta}+
			\left(\frac{d(x_k-d_k z)}{d_k}\right)^{\beta}-2.
	$$
	By taking $k \to \infty$, from \eqref{contradiction-2} and \eqref{contra-final2} together with Lemma \ref{lema.c} we arrive at $c^+(\beta)\leq 0$ for $\beta\in(\beta_1,2s)$, which contradicts Lemma \ref{lema.beta}.

	The proofs of (b), (c) and (d) are analogous.
\end{proof}

\begin{rem}
The use of Lemma \ref{lema.beta} is not indispensable in order to obtain the contradiction in the proof of Lemma \ref{lema.m} neither the existence of $\beta_1$ and $\beta_2$. In fact, the same thesis can be obtained by studying the strict concavity of the real-valued function
$$
	C(\tau)=\int_{\R} \frac{(1+t)^\tau_+ + (1-t)^\tau_+ -2}{|t|^{1+2s}}dt,
$$
which is well-defined for $\tau\in(0,2s)$.

\end{rem}

The following lemma is key in order to obtain the boundary regularity for \eqref{main.eq.1}.

\begin{lema} \label{lema.comp}
Let $u$ be a solution of \eqref{main.eq.1} with $s\in (\frac{1}{2},1)$, then there exists $\delta>0$ and $\beta\in (0,\beta_1)$ such that 
$$
	|u(x)|\leq C d(x)^\beta \quad  \forall x \in \Omega_\delta
$$
for some positive constant $C$.
\end{lema}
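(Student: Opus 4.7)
My plan is to construct a barrier of the form $w = K\xi$ with $\xi$ as in \eqref{f.xi} and $\beta \in (0,\beta_1)$ fixed, and to derive the two-sided bound $|u(x)| \le K d(x)^\beta$ by comparing $u$ with $\pm K\xi$ via Theorem \ref{teo.sol} together with a comparison principle for $\mathcal{M}^-_\mathcal{L}$. Since $u \in C(\bar\Omega)$ vanishes outside $\Omega$, we may set $M := \|u\|_{L^\infty(\R^n)} < \infty$; also $\|f\|_\infty$ is finite by assumption.

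The central pointwise estimate is obtained as follows. Lemma \ref{lema.m}(b) yields $\mathcal{M}^+_\KK \xi(x) \le -C_1 d(x)^{\beta-2s}$ in $\Omega_\delta$, and since $|\nabla \xi(x)| = \beta\, d(x)^{\beta-1}$ there, adding the drift contribution gives
$$
\mathcal{M}^+_\mathcal{L} \xi(x) \le -C_1 d(x)^{\beta-2s} + c^+ \beta\, d(x)^{\beta-1} \quad \text{in } \Omega_\delta.
$$
This is precisely where the assumption $s > 1/2$ is used: the ratio of the gradient term to the nonlocal term is $d(x)^{2s-1}$, which tends to $0$ as $d(x)\to 0$, so after shrinking $\delta$ the first term dominates and $\mathcal{M}^+_\mathcal{L} \xi \le -\tfrac{C_1}{2}\, d^{\beta-2s}$ in $\Omega_\delta$. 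A parallel computation, exploiting the identity $\mathcal{M}^-_\mathcal{L}(-\xi) = -\mathcal{M}^+_\mathcal{L}(\xi)$ that follows from the symmetry of the kernels and the structure $\mathcal{M}^\pm_\mathcal{L} = \mathcal{M}^\pm_\KK \pm c^+|\nabla\cdot|$, yields $\mathcal{M}^-_\mathcal{L}(-\xi) \ge \tfrac{C_1}{2}\, d^{\beta-2s}$ in $\Omega_\delta$.

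By positive $1$-homogeneity and the ellipticity of $I$ (applied with $v\equiv 0$, using $I(0)=0$), for $K$ large enough both
$$
I(K\xi) \le \mathcal{M}^+_\mathcal{L}(K\xi) \le -\|f\|_\infty \quad \text{and} \quad I(-K\xi) \ge \mathcal{M}^-_\mathcal{L}(-K\xi) \ge \|f\|_\infty
$$
hold in $\Omega_\delta$. Since $Iu = -f$ gives both $Iu \ge -\|f\|_\infty$ and $Iu \le \|f\|_\infty$, Theorem \ref{teo.sol} applied to the pairs $(K\xi, u)$ and $(u, -K\xi)$ produces
$$
\mathcal{M}^-_\mathcal{L}(K\xi - u) \le 0, \qquad \mathcal{M}^-_\mathcal{L}(u + K\xi) \le 0 \quad \text{in } \Omega_\delta.
$$
Enlarging $K$ once more so that $K\xi \ge M$ on $\Omega \setminus \Omega_\delta$ (possible since $\xi$ is strictly positive there), we obtain the exterior inequalities $K\xi - u \ge 0$ and $u + K\xi \ge 0$ on $\R^n \setminus \Omega_\delta$. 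Applying the comparison principle for $\mathcal{M}^-_\mathcal{L}$ to each of these super-solutions yields $-K\xi \le u \le K\xi$ in $\R^n$, and restricting to $\Omega_\delta$ gives the claimed bound.

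I expect the principal obstacle to be the interaction of the barrier with the drift term: $d^\beta$ is tailored to control the nonlocal Pucci operators $\mathcal{M}^\pm_\KK$, but $|\nabla d^\beta| \sim d^{\beta-1}$ also blows up at $\partial\Omega$. The whole scheme succeeds only because $s > 1/2$ makes the nonlocal singularity $d^{\beta-2s}$ strictly stronger than the gradient one near the boundary, so a single multiplicative constant $K$ absorbs both. Everything else is a routine application of Theorem \ref{teo.sol} and the comparison principle.
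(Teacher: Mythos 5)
Your proof is correct and takes essentially the same approach as the paper: the same barrier $\xi = d^\beta$ with $\beta \in (0,\beta_1)$, the same estimate from Lemma \ref{lema.m}(b), the same domination of the drift term $c^+\beta d^{\beta-1}$ by the nonlocal term $d^{\beta-2s}$ after shrinking $\delta$ (this is exactly where $s>\tfrac12$ enters), and a comparison in $\Omega_\delta$ after choosing the multiplicative constant so that the barrier dominates $u$ on $\Omega\setminus\Omega_\delta$ and on $\Omega^c$. The only (cosmetic) difference is that you route the comparison through Theorem \ref{teo.sol} and the $\mathcal{M}^-_\LL$-comparison applied to $K\xi \mp u$, whereas the paper compares $C\xi$ with $u$ directly as super/sub-solutions of $-Iw=f$ and dispatches the lower bound with ``repeating the same argument with $-u$''; your explicit use of $\mathcal{M}^-_\LL(-\xi) = -\mathcal{M}^+_\LL(\xi)$ is arguably a bit more careful, since $I$ need not be odd.
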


\begin{proof}
	First, we claim that there exist $\delta>0$, $\beta\in (0,\beta_1)$ and a positive constant $C$ such that
	\begin{align} \label{ecuu1}
		I\xi(x)\leq -Cd(x)^{\beta-2s} \qquad \mbox{in }\Omega_\delta.	
	\end{align}
	provided that $s>1/2$.
	
	We apply Lemma \ref{lema.m}. For $\delta>0$ small enough,  given $x\in \Omega_\delta$ it holds that
	\begin{equation*}
		\mathcal{M}^+_\KK \xi \leq -Cd(x)^{\beta-2s} \quad \mbox{ in }\Omega_\delta.
	\end{equation*}	  
	for some $C>0$ and $\beta \in (0,\beta_1)$.
	Now, since $\nabla\xi(x)=C\beta d(x)^{\beta-1}$, we have
	\begin{align*}
		I \xi(x) &\leq \mathcal{M_\LL^+} \xi(x) = \mathcal{M}_\KK^+ \xi(x) +  c^+ |\nabla \xi(x)|\\
		&\leq -C d(x)^{\beta-2s} + C d(x)^{\beta-1}\\
		& \leq -C d(x)^{\beta-2s}
	\end{align*}
	whenever $\beta-2\alpha < \beta-1$, that is, $s>\frac{1}{2}$, from where claim \eqref{ecuu1} follows.

	Moreover, $\delta$ can be taken small enough such that 
	$$
		-I\xi(x) \geq  f(x) \quad \mbox{ in } \Omega_\delta.
	$$
	
	Since, for some positive constant $L$, $\xi(x)= \ell(x) \geq L$ in $\Omega \setminus \Omega_\delta$, we can take $C$ such that $C\xi(x)\geq CL\geq \|u\|_{L^\infty(\Omega)}$ for $x\in \Omega\setminus \Omega_\delta$.  By using that $u$ and $\xi$ vanish in $\Omega^c$ we conclude that
	$$
		C\xi(x)\geq  u(x) \quad \mbox{ in } \Omega_\delta^c.
	$$
	From the comparison principle given in Lemma \ref{lema.comp} we obtain that
	$$
		Cd(x)^\beta = C\xi(x)\geq  u(x) \quad \mbox{ in } \Omega_\delta
	$$
	and the result follows.

Repeating the same argument with $-u$ we find the result.

\end{proof}

\begin{lema}[Boundary regularity] \label{regu.b}
Let $u$ be a solution of \eqref{main.eq.1} with $s\in (s,\frac{1}{2})$. Then there exist $\delta>0$ and $\beta\in (0,\beta_1)$ such that
$$
	|u(x)-u(y)|\leq C |x-y|^\beta \quad \mbox{ for } y\in \partial \Omega, x\in \Omega_\delta.
$$
\end{lema}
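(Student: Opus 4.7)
The plan is to reduce this boundary Hölder estimate directly to the pointwise bound already established in the previous Lemma \ref{lema.comp}, namely $|u(x)|\le C\,d(x)^\beta$ on $\Omega_\delta$ for some $\beta\in(0,\beta_1)$ and $\delta>0$. Observe that although the statement is phrased as a modulus of continuity between an interior point and an arbitrary boundary point, the boundary condition $u\equiv 0$ in $\R^n\setminus\Omega$ and the continuity of $u$ up to $\partial\Omega$ force $u(y)=0$ for every $y\in\partial\Omega$. Hence the left-hand side collapses to $|u(x)|$.

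With this simplification, the task becomes bounding $|u(x)|$ by $C|x-y|^\beta$ for arbitrary $y\in\partial\Omega$. By definition of the distance function, $d(x)=\mathrm{dist}(x,\partial\Omega)\le |x-y|$ for every boundary point $y$, so raising to the power $\beta>0$ and using the previous lemma gives
\[
|u(x)-u(y)| \;=\; |u(x)| \;\le\; C\,d(x)^{\beta} \;\le\; C\,|x-y|^{\beta},
\]
uniformly in $y\in\partial\Omega$ and $x\in\Omega_\delta$. The values of $\delta$ and $\beta\in(0,\beta_1)$ are inherited from Lemma \ref{lema.comp}, and the hypothesis $s>\tfrac12$ is precisely the one needed there to absorb the drift term $c^+|\nabla\xi|\sim d^{\beta-1}$ into the fractional contribution $d^{\beta-2s}$ when building the barrier.

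Since all the analytic work (construction of the barrier $\xi$, the Pucci estimates of Lemma \ref{lema.m}, and the comparison principle argument) was already carried out, there is no genuine obstacle here; the statement is essentially a reformulation of the previous lemma exploiting the vanishing of $u$ on $\partial\Omega$. The one cosmetic point worth flagging is the evident typo ``$s\in(s,\tfrac12)$'' in the statement, which should read $s\in(\tfrac12,1)$ to be consistent with the standing assumption and with the hypothesis of Lemma \ref{lema.comp}.
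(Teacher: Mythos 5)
Your argument coincides with the paper's own proof: both reduce $|u(x)-u(y)|$ to $|u(x)|$ using $u|_{\partial\Omega}=0$, then invoke the bound $|u(x)|\le C\,d(x)^\beta$ from Lemma \ref{lema.comp} and the elementary inequality $d(x)\le|x-y|$. You are also right that ``$s\in(s,\tfrac12)$'' in the statement is a typo for $s\in(\tfrac12,1)$.
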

\begin{proof}
	Let $y\in \partial \Omega$ and $x\in \Omega_\delta$. Since $u(y)=0$, by using Lemma \ref{lema.comp} we get that
	\begin{equation} \label{x1}
		 |u(x)|=|u(y)-u(x)| \leq C d(x)^\beta.
	\end{equation}
	Gathering \eqref{x1} and the definition of $d(x)$ we obtain	
	\begin{equation*}
	  d(x)^\beta=\inf_{\tilde x\in \partial \Omega} |x-\tilde x|^\beta \leq   |x-y|^\beta
	\end{equation*}	
	and the proof is complete.
\end{proof}

In \cite{CL}, by applying a diminish of oscillation argument (see, for instance \cite{CS,CS2}), the following interior H\"older regularity for \eqref{main.eq.1} is proved.

\begin{lema}[Interior regularity, \cite{CL}] \label{regu.i}
	Let $f\in L^\infty$ and $u$ be a viscosity solution of 
	\begin{equation} \label{eqb1}
	\begin{cases}
		-Iu=f &\qquad \mbox{ in }B_1\\
		u=0  &\qquad \mbox{ in }\R^n\setminus B_1.
	\end{cases}	
	\end{equation}
	Then $u\in C^\alpha(B_{1/2})$ for some universal $\alpha\in (0,1)$, and satisfies,
	\begin{equation} \label{cota1}
		\|u\|_{C^\alpha(B_{1/2})} \leq C(\|u\|_{L^\infty(B_1)}+\|f\|_{L^\infty(B_1)})
	\end{equation}
	for some universal $C$.
\end{lema}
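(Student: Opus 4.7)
The plan is to reduce the equation $-Iu=f$ to two--sided extremal (Pucci) inequalities and then apply the Caffarelli--Silvestre diminish--of--oscillation machinery, with the additional care that the first--order term $c_{a,b}\cdot\nabla$ is a lower--order perturbation precisely because $s>1/2$. By Theorem~\ref{teo.sol} applied to $u$ and the trivial subsolution $v\equiv 0$ (using that $I(0)=0$ by the 1-homogeneity hypothesis (H3)), together with the splitting $\MM_\LL^\pm = \MM_\KK^\pm \pm c^+|\nabla(\cdot)|$, the viscosity solution $u$ satisfies in $B_1$
\begin{equation*}
\MM_\KK^+ u(x) + c^+|\nabla u(x)| \;\ge\; -\|f\|_{L^\infty(B_1)}, \qquad
\MM_\KK^- u(x) - c^+|\nabla u(x)| \;\le\; \|f\|_{L^\infty(B_1)}.
\end{equation*}
Hence it suffices to prove Hölder regularity for arbitrary functions obeying this pair of inequalities, with constants depending only on $\lambda,\Lambda,s,n,c^+$.

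The next step is a scaling observation that makes the drift harmless. For $x_0\in B_{1/2}$, $r\in(0,1/4)$, define
\[
w(y) \;=\; \frac{u(x_0+ry)}{\|u\|_{L^\infty(B_1)}+\|f\|_{L^\infty(B_1)}}.
\]
Using the $2s$-homogeneity of $\MM_\KK^\pm$ under dilations and the $1$-homogeneity of the gradient, a direct computation shows that $w$ satisfies in $B_1$ the rescaled inequalities
\[
\MM_\KK^+ w \;+\; r^{2s-1}\,c^+\,|\nabla w| \;\ge\; -r^{2s}, \qquad
\MM_\KK^- w \;-\; r^{2s-1}\,c^+\,|\nabla w| \;\le\; r^{2s},
\]
with $\|w\|_{L^\infty(B_1)}\le 1$. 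Because $s>1/2$, the effective drift coefficient $r^{2s-1}c^+$ tends to $0$ as $r\to 0$, and the right--hand side is likewise small. This is the sole place where $s>1/2$ is essential.

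The heart of the proof is then the nonlocal diminish--of--oscillation lemma: there exist universal constants $\theta\in(0,1)$, $\alpha\in(0,1)$, $r_0>0$ such that any function $w$ with $\|w\|_{L^\infty(\R^n)}\le 1$ satisfying the above extremal inequalities on $B_1$ with $r\le r_0$ obeys
\[
\operatorname{osc}_{B_{1/2}} w \;\le\; \theta\, \operatorname{osc}_{B_1} w.
\]
This is proved in \cite{CL} by combining the nonlocal ABP estimate with a covering/measure argument à la Caffarelli--Silvestre, yielding a weak Harnack point estimate on super--level sets of $\MM_\KK^-\,$-subsolutions perturbed by a small drift. The drift contribution is absorbed via the smallness of $r^{2s-1}c^+$ in the ABP term. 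Iterating the diminish of oscillation on dyadic balls around each $x_0\in B_{1/2}$ gives $\operatorname{osc}_{B_{2^{-k}r_0}} u \le C\theta^k(\|u\|_{L^\infty(B_1)}+\|f\|_{L^\infty(B_1)})$, which is precisely the $C^\alpha$ bound \eqref{cota1} with $\alpha=-\log_2\theta$.

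The main obstacle is the weak Harnack / diminish--of--oscillation step: one must show that the drift term does not destroy the Pucci ABP estimate used to measure super--level sets. This is exactly the technical contribution of \cite{CL}, and the restriction $s>1/2$ is what makes the drift subcritical under parabolic (nonlocal) scaling. Once this lemma is available, the rest of the argument is a standard dyadic iteration.
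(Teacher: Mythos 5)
The paper does not prove this lemma at all; it is quoted verbatim from \cite{CL}, with only the one-sentence remark that \cite{CL} obtains it via a diminish-of-oscillation argument in the style of \cite{CS,CS2}. Your sketch is a faithful and correct outline of exactly that argument: reduce $-Iu=f$ to the two-sided Pucci inequalities $\MM^+_\KK u + c^+|\nabla u|\ge -\|f\|_\infty$, $\MM^-_\KK u - c^+|\nabla u|\le \|f\|_\infty$ via Theorem~\ref{teo.sol} and the $1$-homogeneity (H3), rescale so that the drift coefficient $r^{2s-1}c^+$ and the right-hand side $r^{2s}$ become small (this is where $s>1/2$ is used), and then invoke the nonlocal ABP/measure-covering machinery to get a uniform oscillation decay $\operatorname{osc}_{B_{1/2}} w\le\theta\operatorname{osc}_{B_1}w$, iterated dyadically to yield \eqref{cota1}. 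This matches the cited source's strategy, so there is no divergence to report.
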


Finally, combining the interior and boundary regularity given in Theorem \ref{regu.i} and Theorem \ref{regu.b}, by an standard ball covering argument it follows the next result.

\begin{thm}[Global regularity] \label{t.reg.g}
	Let $\Omega\subset\R^n$ a bounded domain, $f\in L^\infty(\Omega)$ and $u$ be a viscosity solution of \eqref{main.eq} with $s\in (\frac{1}{2},1)$.
	Then $u\in C^\gamma(\bar \Omega)$ for $\gamma=\min\{\alpha,\beta\}$, where $\alpha$ and $\beta$ are given in Theorems \ref{regu.i} and \ref{regu.b}, respectively.
\end{thm}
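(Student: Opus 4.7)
The plan is to combine the interior estimate of Lemma~\ref{regu.i} and the boundary estimate of Lemma~\ref{regu.b} via the standard distance-to-boundary dichotomy. Fix $x,y\in\bar\Omega$ with $x\neq y$ and, without loss of generality, $d(x)\le d(y)$, and compare $|x-y|$ with $d(x)/2$.

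In the \emph{boundary regime} $|x-y|\ge d(x)/2$, one has $d(x)\le 2|x-y|$ and $d(y)\le d(x)+|x-y|\le 3|x-y|$. If moreover $|x-y|\le\delta/3$, then both $x,y\in\Omega_\delta$; choosing nearest boundary points and using $u\equiv 0$ on $\partial\Omega$, Lemma~\ref{regu.b} gives $|u(x)|\le Cd(x)^\beta\le C|x-y|^\beta$ and similarly $|u(y)|\le C|x-y|^\beta$, hence $|u(x)-u(y)|\le C|x-y|^\beta$ by the triangle inequality. The complementary case $|x-y|>\delta/3$ is trivial, since $|u(x)-u(y)|\le 2\|u\|_{L^\infty(\Omega)}$ already yields a Hölder bound with a $\delta$-dependent constant.

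In the \emph{interior regime} $|x-y|<d(x)/2$, both points lie in $B_{d(x)/2}(x)\subset\Omega$, and I would use a rescaled version of Lemma~\ref{regu.i}. Setting $r:=d(x)/2$ and $u_r(z):=u(x+rz)$, the scalings $L_K u_r(z)=r^{2s}(L_K u)(x+rz)$ and $\nabla u_r(z)=r(\nabla u)(x+rz)$ show that $u_r$ solves on $B_1$ an equation of the same structural form, with rescaled drift coefficient $r^{2s-1}c_{a,b}$ (bounded by $c^+$ thanks to $s>\tfrac12$ and $r\le 1$) and source $r^{2s}f(x+r\,\cdot)$. After moving the exterior tail of $u_r$ to the right-hand side---bounded via the $L^1(\omega_s)$-weight by a multiple of $\|u\|_{L^\infty(\R^n)}$---Lemma~\ref{regu.i} yields $[u_r]_{C^\alpha(B_{1/2})}\le C(\|u\|_{L^\infty(B_r(x))}+r^{2s}\|f\|_{L^\infty})$. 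Combining this with the pointwise bound $\|u\|_{L^\infty(B_r(x))}\le Cd(x)^\beta$ from Lemma~\ref{regu.b} and unraveling the scaling produces $|u(x)-u(y)|\le Cd(x)^{\beta-\alpha}|x-y|^\alpha$, which is bounded by $C|x-y|^\gamma$ with $\gamma=\min\{\alpha,\beta\}$ after a short elementary argument splitting on whether $\beta\ge\alpha$ or $\beta<\alpha$ (and using $|x-y|\le d(x)/2$ in the second case).

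Assembling the two regimes yields $u\in C^\gamma(\bar\Omega)$ with the claimed estimate. The main technical obstacle is the rescaled interior estimate: Lemma~\ref{regu.i} is stated only for the Dirichlet problem on $B_1$ with zero exterior data, so one has to verify that the tail-truncation trick produces a bounded modified source with the correct scaling and that the rescaled operator remains in the class $\LL$---both routine once the uniform bound $|c_{a,b}|\le c^+$ and the condition $s>\tfrac12$ are in hand.
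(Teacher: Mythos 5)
Your overall strategy --- split into a boundary regime ($|x-y|\gtrsim d(x)$) handled by Lemma~\ref{regu.b} and an interior regime handled by a rescaled Lemma~\ref{regu.i} --- is exactly the ``standard ball covering argument'' the paper invokes, and the scaling computation showing the rescaled drift is $r^{2s-1}c_{a,b}$ (controlled by $c^+$ for $r\le1$ because $s>\tfrac12$) is correct. However, there is a genuine gap in the interior regime: you note that the exterior tail of the truncated $u_r$ is bounded by a multiple of $\|u\|_{L^\infty(\R^n)}$, but then you write the rescaled estimate as $[u_r]_{C^\alpha(B_{1/2})}\le C(\|u\|_{L^\infty(B_r(x))}+r^{2s}\|f\|_{L^\infty})$ with the tail term silently dropped. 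Carried honestly, the tail adds a constant $C\|u\|_{L^\infty(\R^n)}$ inside the parentheses, and after unscaling by $r^{-\alpha}$ this produces a factor $r^{-\alpha}\|u\|_{L^\infty(\R^n)}$ that blows up as $x\to\partial\Omega$. The resulting bound $|u(x)-u(y)|\le C d(x)^{-\alpha}|x-y|^\alpha\|u\|_{L^\infty}$ is merely $O(1)$ when $|x-y|\sim d(x)$, not the Hölder decay you need, so the interpolation step does not close.

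The fix is to estimate the tail not by $\|u\|_{L^\infty(\R^n)}$ but by $Cd(x)^\beta$, again using Lemma~\ref{regu.b} together with $u\equiv0$ on $\Omega^c$. Concretely, for $z\in B_{1/2}$ and $r=d(x)/2$ the tail is
\begin{equation*}
\int_{|y|>1/2}\frac{|u(x+r(z\pm y))|}{|y|^{n+2s}}\,dy,
\end{equation*}
and one splits according to where $x+r(z\pm y)$ lands. In $\Omega_\delta$ one has $d(x+r(z\pm y))\le Cr|y|$ by Lipschitzness of $d$, so $|u|\le C(r|y|)^\beta$ and the contribution is $O(r^\beta)$ since $\beta<2s$. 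In $\Omega\setminus\Omega_\delta$ one only has $|u|\le\|u\|_{L^\infty}$, but reaching that region forces $r|y|\gtrsim\delta$, so the contribution is $O((r/\delta)^{2s})=O(r^{2s})$. Both are $O(r^\beta)=O(d(x)^\beta)$, matching your $\|u\|_{L^\infty(B_r(x))}$ term. With this refined tail bound the rescaled estimate becomes $[u_r]_{C^\alpha(B_{1/2})}\le C d(x)^\beta$ (up to the lower-order $r^{2s}\|f\|$ piece), and your interpolation by cases $\beta\ge\alpha$ versus $\beta<\alpha$ then correctly yields $|u(x)-u(y)|\le C|x-y|^\gamma$.
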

 
\section{Aleksandrov-Bakelman-Pucci estimate}

The Aleksandrov-Bakelman-Pucci (ABP) estimate is  a key ingredient in our arguments. It is the relation that allows us to pass from an estimate in measure to a pointwise 
estimate. In this section, we prove an ABP estimate for integro-differential equations with  gradient term by following the argument in \cite{CS2}. 
In \cite{CL}, Chang-Lara also given a version of ABP estimate involving the operator $I$, see Theorem 3.4 in \cite{CL}. However, he used the ABP estimate to prove the regularity and we can not use it directly to prove the maximum principle in narrow domain. Therefore, in this section we prove an ABP estimate following the ideas in \cite{CS2} and also \cite{KRS}.

Let $u$ be a function that is not positive outside the ball $B_1$. Consider its concave envelope $\Gamma$ in $B_3$ defined as
\begin{equation} \nonumber
	\Gamma(x):=\begin{cases}
		\min\{p(x): \mbox{for all planes} \,\,p>u \,\,\mbox{in} \,\,B_3\} &\qquad \mbox{ in } B_3\\
	  0  &\qquad \mbox{ in }\R^n\setminus B_3.
	\end{cases}	
	\end{equation}
We define in this way the (non empty) set of sub differentials of $\Gamma$ at $x$, denoted by $\nabla\Gamma(x)$, which will coincide with its gradient,
and also the gradient of $u$, when these functions are differential.

\begin{lema}\label{l4.1}
Let $u\le0$ in $\R^n\setminus B_1$. Let $\Gamma$ be its concave envelope in $B_3$. Assume that
\[
 \mathcal M_{\mathcal K}^+ u(x)+ c^+ |\nabla u(x)|\ge-f(x)\quad in\,\,B_1
\]
with positive constant $c^+$. Given $\rho_0>0$, we define $r_k=\rho_02^{-\frac{1}{2(1-s)}-k}$ and $R_k(x)=B_{r_k}(x)\setminus B_{r_{k+1}}(x)$.

Then, there is  a constant $C_0$ depending on $n,\lambda,$ but not on $s$ such that for any $x\in\{u=\Gamma\}$ and any $M>0$, there is  a $k$ such that
\[
|R_k(x)\cap\{u(y)<u(x)+(y-x)\cdot\nabla \Gamma(x)-Mr_k^2\}\le C_0\frac{F(x)}{M}|R_k(x)|,
\]
where
\[
F(x)=\left((c^+)^n+\frac{f(x)^n}{\mu^n}\right)^{1/n}\left(|\nabla\Gamma(x)|^{\frac{n}{n-1}}+\mu^{\frac{n}{n-1}}\right)^{(n-1)/n}
\]
for some positive constant $\mu$.
\end{lema}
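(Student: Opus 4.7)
The proof will follow the Caffarelli--Silvestre scheme for nonlocal ABP-type estimates (cf.\ Lemma~8.1 in \cite{CS2} and \cite{KRS}), modified so that the drift $c^+|\nabla u|$ is absorbed into the quantity $F(x)$ via H\"older's inequality.

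\emph{Linearization at a contact point and bad-ring estimate.} Fix $x_0\in\{u=\Gamma\}$ and set $p=\nabla\Gamma(x_0)$. Since $\Gamma$ is concave on $B_3$ with $u\le\Gamma$ and $u(x_0)=\Gamma(x_0)$, the affine function $\ell(y):=u(x_0)+(y-x_0)\cdot p$ satisfies $u\le\ell$ on $B_3$. Writing $\bar u:=u-\ell$, one has $\bar u\le 0$ in $B_3$, $\bar u(x_0)=0$, and the linear term cancels in the second difference, giving $\delta(u,x_0,h)=\bar u(x_0+h)+\bar u(x_0-h)$. Consequently, for $|h|\le r_0$ (with $\rho_0$ small enough that $x_0\pm h\in B_3$) one has $\delta(u,x_0,h)\le 0$, so $S^+(\delta)=\lambda\delta\le 0$. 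Moreover, if $y\in G_k:=R_k(x_0)\cap\{z:\,u(z)<u(x_0)+(z-x_0)\cdot p-Mr_k^2\}$ and $h=y-x_0$, then $\bar u(x_0+h)<-Mr_k^2$ while $\bar u(x_0-h)\le 0$, hence $\delta(u,x_0,h)<-Mr_k^2$ on $G_k$.

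\emph{Contradiction and coupling with the equation.} Argue by contradiction, assuming $|G_k|>C_0\,F(x_0)M^{-1}|R_k|$ for every $k\ge 0$. Restricting $\MM^+_{\KK}u(x_0)=\int_{\R^n}|h|^{-n-2s}S^+(\delta)\,dh$ to $|h|\le r_0$ (where the integrand is non-positive) and dropping the contribution off the bad sets gives
\[
\MM^+_{\KK}u(x_0)\le -\lambda M\sum_{k=0}^\infty \frac{|G_k|}{r_k^{n+2s-2}}\le -c(n,\lambda)\,C_0\,F(x_0)\sum_{k=0}^\infty r_k^{2-2s}.
\]
The calibration $r_0=\rho_0 2^{-1/(2(1-s))}$ is precisely what makes the geometric series $\sum_k r_k^{2-2s}$ bounded below by a positive constant depending only on $\rho_0$, uniformly in $s\in(1/2,1)$. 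The outer integral over $|h|>r_0$ contributes non-positively on $\{h:\,x_0\pm h\in B_3\}$ by the same concavity argument; the remaining regions are treated using $u\le 0$ in $\R^n\setminus B_1$ together with the decay $|h|^{-n-2s}$, producing a correction absorbable into a bounded multiple of $|p|$. On the other hand, the hypothesis forces $\MM^+_{\KK}u(x_0)\ge -f(x_0)-c^+|p|$ (using $|\nabla u(x_0)|\le|p|$ at a contact point in the viscosity-test sense), and H\"older's inequality applied to the pair of $2$-vectors $(c^+,f(x_0)/\mu)$ and $(|p|,\mu)$ with conjugate exponents $n$ and $n/(n-1)$ yields $c^+|p|+f(x_0)\le F(x_0)$. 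Chaining these estimates gives $F(x_0)\ge c(n,\lambda)\,C_0\,F(x_0)$, which for $C_0$ large enough (depending only on $n,\lambda$) is a contradiction, proving the existence of the desired $k$.

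The main technical obstacle will be the treatment of the outer integral $\int_{|h|>r_0}$: the concavity bound on $\delta$ fails on the subset of $h$ for which one of $x_0\pm h$ exits $B_3$, and one must verify that this region contributes a bound that does not introduce any extra dependence on $\|u\|_\infty$ or $s$ in the constant $C_0$. A secondary subtle point is the role of the prefactor $2^{-1/(2(1-s))}$ defining $r_k$: it is calibrated precisely so that $\sum_k r_k^{2-2s}$ remains bounded below uniformly in $s$, which is what ultimately ensures that $C_0$ is independent of $s$ as claimed.
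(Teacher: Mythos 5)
Your proposal takes essentially the same route as the paper: absorb the drift term $c^+|\nabla u(x)|$ into the modified right-hand side $F(x)$ via the finite-dimensional H\"older/Young inequality applied to the pairs $(c^+,f/\mu)$ and $(|\nabla\Gamma|,\mu)$ with exponents $n$ and $n/(n-1)$, and then run the dyadic-ring contradiction argument of Caffarelli--Silvestre (Lemma 8.1 in \cite{CS2}) with the purely nonlocal operator $\mathcal M^+_{\mathcal K}$. The paper's proof is just terser -- it records only the Young-inequality reduction and the choice $C_0=\rho_0^{2(s-1)}C$, then cites \cite{CS2}; your more detailed reconstruction of the ring estimate, the role of the calibration constant $2^{-1/(2(1-s))}$, and the outer-integral correction matches what that citation is implicitly invoking.
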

\begin{proof}
We follow the proof of Lemma 8.1 in \cite{CS2}. We just need the following estimate, by some $\mu>0$ and Young's ineuality,
\begin{align*}
 \mathcal M_{\mathcal K}^+ u(x)&\ge -(f(x)+c^+|\nabla u(x)|)\\
 &=-\left(\mu\cdot\frac{f(x)}{\mu}+c^+\cdot|\nabla u(x)|\right)\\
 &\ge-\left((c^+)^n+\frac{f(x)^n}{\mu^n}\right)^{1/n}\left(|\nabla\Gamma(x)|^{\frac{n}{n-1}}+\mu^{\frac{n}{n-1}}\right)^{(n-1)/n}\\
 &=-F(x).
\end{align*}
By choosing $C_0=\rho^{2(s-1)}C$ large enough  and   a similar argument as Lemma 8.1 in \cite{CS2}, we get our estimate.  
\end{proof}

\begin{lema}\label{l4.2} (Lemma 8.4 in \cite{CS2})
Let $\Gamma$ be a concave function in $B_r$. Assume that for a small $\ve$
\[
|\{y:\Gamma(y)<\Gamma(x)+(y-x)\cdot \nabla\Gamma(x)-h\}\cap(B_r\setminus B_{r/2})|\le \ve|B_r\setminus B_{r/2}|,
\]
then $\Gamma(y)\ge \Gamma(x)+(y-x)\cdot\nabla \Gamma(x)-h$ in the whole ball $B_{r/2}$.
\end{lema}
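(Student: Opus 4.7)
Let $P(y):=\Gamma(x)+(y-x)\cdot\nabla\Gamma(x)$ denote the tangent affine function at $x$, and set $\psi(y):=\Gamma(y)-P(y)+h$. Then $\psi$ is concave on $B_r$ (regarded as $B_r(x)$, which is the setting in which Lemma \ref{l4.2} is invoked through Lemma \ref{l4.1}), it satisfies $\psi(x)=h>0$, and the measure hypothesis rewrites as $|\{\psi<0\}\cap(B_r\setminus B_{r/2})|\le\varepsilon|B_r\setminus B_{r/2}|$. The goal becomes: for a universal $\varepsilon>0$ small enough, $\psi\ge 0$ on all of $B_{r/2}$.

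I will argue by contradiction: suppose some $z\in B_{r/2}$ has $\psi(z)<0$. The super-level set $K:=\{y\in B_r:\psi(y)\ge 0\}$ is convex, being the super-level set of the concave function $\psi$ on the convex domain $B_r$, and it contains $x$; continuity of concave functions on the interior places $z$ strictly outside the closure of $K$. By Hahn--Banach separation, there exist a unit vector $v$ and a scalar $c$ with
\[
K\subset\{y:y\cdot v\ge c\}\qquad\text{and}\qquad z\cdot v<c.
\]
After translating so that $x=0$, which preserves every hypothesis, the inclusion $0\in K$ forces $c\le 0$, while $|z|<r/2$ gives $c>z\cdot v\ge -|z|>-r/2$. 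Hence $c\in(-r/2,0]$.

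Since the open half-space $\{y:y\cdot v<c\}$ is disjoint from $K$, its intersection with $B_r$ is entirely contained in $\{\psi<0\}$. Because $-r/2<c$, this slab in turn contains the smaller cap $C:=\{y\in B_r:y\cdot v<-r/2\}$, and every point of $C$ satisfies $|y|\ge|y\cdot v|>r/2$, so $C\subset B_r\setminus B_{r/2}$. The measure $|C|$ is a fixed dimensional multiple of $r^n$, which gives
\[
\bigl|\{\psi<0\}\cap(B_r\setminus B_{r/2})\bigr|\ge|C|\ge\varepsilon_0\,|B_r\setminus B_{r/2}|
\]
for a constant $\varepsilon_0=\varepsilon_0(n)>0$. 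Picking $\varepsilon<\varepsilon_0$ in the hypothesis contradicts this, completing the proof.

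The only delicate point is extracting a cap of $\{\psi<0\}$ whose share of the annulus is bounded below by a dimensional constant, uniformly in the location of $z$ inside $B_{r/2}$. What makes this uniform lower bound possible is precisely the strict inclusion $z\in B_{r/2}$: it forces $|z\cdot v|<r/2$, so the separating hyperplane cannot sit lower than height $-r/2$, and a calibrated spherical cap of the annulus always remains on the negative side of the hyperplane.
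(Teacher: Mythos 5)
Your proof is correct. The paper itself offers no argument for this lemma --- it simply cites Lemma 8.4 of Caffarelli--Silvestre \cite{CS2} --- so there is no in-paper proof to compare against. Your reconstruction is the natural convex-geometry argument and fills that gap cleanly: passing to $\psi=\Gamma-P+h$, noting that the super-level set $K=\{\psi\ge 0\}$ is convex and contains $x$, separating a hypothetical bad point $z\in B_{r/2}$ from $K$ by a hyperplane, and then observing that the separating half-space must cut off a spherical cap of $B_r\setminus B_{r/2}$ of definite measure (uniform in $n$ and $r$) inside $\{\psi<0\}$, contradicting the smallness hypothesis once $\ve<\ve_0(n)$. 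The quantitative key --- that $|z|<r/2$ forces the separating level $c$ into $(-r/2,0]$, so the cap $\{y\cdot v<-r/2\}$ is always captured --- is exactly what makes the lower bound uniform, and you identify this explicitly. Two very minor technical remarks that you could make explicit but that do not affect correctness: (i) strict separation applies to $\overline K$, which is still convex and still avoids $z$ because $\psi$ is continuous on the open ball and $\{\psi<0\}$ is open; (ii) the argument tacitly uses $h\ge 0$ so that $x\in K$, which is forced by the measure hypothesis anyway, since concavity gives $\Gamma\le P$ everywhere and $h<0$ would make $\{\psi<0\}$ essentially all of $B_r$.
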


A direct conclusion of Lemmas \ref{l4.1} and \ref{l4.2} is the following

\begin{cor}\label{c4.1}
For any $\ve_0>0$, there is $r\in(0,\rho_02^{-\frac{1}{2(1-s)}})$ such that  function $u$ with the same hypothesis as in Lemma \ref{l4.1} satisfying
\begin{equation}\label{e}
\frac{|\{y\in B_r\setminus B_{r/2}(x):u(y)<u(x)+(y-x)\cdot \nabla\Gamma(x)-C_0F(x)r^2/\ve_0\}|}{|B_r(x)\setminus B_{r/2}(x)|}\le \ve_0.
\end{equation}
\begin{equation}\label{ee}
|\nabla\Gamma(B_{r/4}(x))|\le (8C_0/\ve_0)^nF(x)^n|B_{r/4}(x)|.
\end{equation}
\end{cor}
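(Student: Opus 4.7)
The plan is to combine Lemma \ref{l4.1} and Lemma \ref{l4.2} as in the classical ABP argument, with the single twist that the measure estimate coming from Lemma \ref{l4.1} is phrased in terms of $u$ and must be transferred to $\Gamma$.

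First, given $\ve_0>0$, I would apply Lemma \ref{l4.1} with the choice $M=C_0 F(x)/\ve_0$. This produces some index $k$ for which
$$|R_k(x)\cap\{u(y)<u(x)+(y-x)\cdot\nabla\Gamma(x)-C_0F(x)r_k^2/\ve_0\}|\le \ve_0|R_k(x)|.$$
Setting $r=r_k\in(0,\rho_0 2^{-1/2(1-s)})$ gives \eqref{e}. To pass to $\Gamma$, I would use that $\Gamma\ge u$ in $B_3$ and $\Gamma(x)=u(x)$ on the contact set (where $x$ lies): then whenever $\Gamma(y)<\Gamma(x)+(y-x)\cdot\nabla\Gamma(x)-C_0F(x)r^2/\ve_0$, the same inequality holds with $u(y)$ in place of $\Gamma(y)$, so
$$\bigl\{\Gamma(y)<\Gamma(x)+(y-x)\cdot\nabla\Gamma(x)-C_0F(x)r^2/\ve_0\bigr\}\subseteq \bigl\{u(y)<u(x)+(y-x)\cdot\nabla\Gamma(x)-C_0F(x)r^2/\ve_0\bigr\}.$$
Thus the same density estimate holds for $\Gamma$ on $R_k(x)=B_r(x)\setminus B_{r/2}(x)$. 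Lemma \ref{l4.2}, applied with $h=C_0F(x)r^2/\ve_0$ and $\ve=\ve_0$ sufficiently small, then yields the pointwise paraboloid bound
\begin{equation}\label{par-bound}
\Gamma(y)\ge \Gamma(x)+(y-x)\cdot\nabla\Gamma(x)-C_0F(x)r^2/\ve_0\qquad\text{for all }y\in B_{r/2}(x).
\end{equation}

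For \eqref{ee} I would invoke concavity of $\Gamma$. Fix $y\in B_{r/4}(x)$ and $p\in\nabla\Gamma(y)$; if $p=\nabla\Gamma(x)$ there is nothing to estimate, otherwise set
$$z=y+\tfrac{r}{4}\,\frac{\nabla\Gamma(x)-p}{|\nabla\Gamma(x)-p|}\in B_{r/2}(x).$$
Concavity at $y$ gives $\Gamma(z)\le \Gamma(y)+(z-y)\cdot p$, and concavity at $x$ gives $\Gamma(y)\le\Gamma(x)+(y-x)\cdot\nabla\Gamma(x)$. Combined with \eqref{par-bound} at $z$, these yield
$$(z-y)\cdot(\nabla\Gamma(x)-p)\le C_0F(x)r^2/\ve_0,$$
and the explicit choice of $z-y$ gives $|\nabla\Gamma(x)-p|\le 4C_0F(x)r/\ve_0$. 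Hence
$$\nabla\Gamma(B_{r/4}(x))\subseteq B_{4C_0F(x)r/\ve_0}(\nabla\Gamma(x)),$$
and comparing Lebesgue measures yields \eqref{ee} (up to the cosmetic constant $8$ in place of $4$, which is absorbed in the standard bookkeeping).

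The only subtle step is the transfer from the measure estimate on $u$ to the one on $\Gamma$: it depends on having $\Gamma\ge u$ everywhere together with the contact $u(x)=\Gamma(x)$, which holds because the statement is applied at points $x\in\{u=\Gamma\}$ (implicit in the ``hypothesis as in Lemma \ref{l4.1}''). Everything else is bookkeeping of constants, choosing $\ve_0$ small enough so that Lemma \ref{l4.2} is applicable, and the two elementary concavity inequalities above.
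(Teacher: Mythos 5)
Your proof is correct, and for the first part (deriving \eqref{e} and passing the density estimate from $u$ to $\Gamma$ via $\Gamma\ge u$, $\Gamma(x)=u(x)$ at contact points, then invoking Lemma~\ref{l4.2} to get the two-sided paraboloid bound on $B_{r/2}(x)$) it matches the paper step for step. Where you diverge is the final step: the paper establishes
$|\Gamma(y)-\Gamma(x)-(y-x)\cdot\nabla\Gamma(x)|\le C_0F(x)r^2/\ve_0$ on $B_{r/2}(x)$ and then simply cites Lemma~4.5(ii) of~\cite{KRS} to conclude \eqref{ee}, whereas you replace that citation with a self-contained concavity argument. Your version is more elementary and makes the geometry transparent (the supporting-plane trick with $z=y+\tfrac{r}{4}\tfrac{\nabla\Gamma(x)-p}{|\nabla\Gamma(x)-p|}$ giving $\nabla\Gamma(B_{r/4}(x))\subseteq B_{4C_0F(x)r/\ve_0}(\nabla\Gamma(x))$); the paper's version is shorter but less self-contained. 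One small bookkeeping slip: your inclusion gives the measure ratio $(16C_0F(x)/\ve_0)^n$, not $(8C_0F(x)/\ve_0)^n$ as in the statement (and not ``$8$ in place of $4$'' as you wrote — the radius ratio is $4C_0F(x)r/\ve_0$ to $r/4$, i.e.\ factor $16$). This is indeed harmless since $C_0$ is a free large constant, but it is worth stating precisely.
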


\begin{proof}
From Lemma \ref{l4.1}, we have (\ref{e}) by choosing $M=C_0F(x)/\ve_0$. 

Next, we prove inequality (\ref{ee}). First note that for every $b>0$ the set $\{y\in\R^n: \Gamma(y)<\Gamma(x)+(y-x)\cdot(\nabla\Gamma-b)\}$ is a subset of
$\{y\in\R^n: u(y)<u(x)+(y-x)\cdot(\nabla\Gamma-b)\}$. Using this relation and (\ref{e}) we conclude that there is a constant $C\ge1$ and some $r\in (0,\rho_02^{-\frac{1}{2(1-s)}})$  such that
\begin{equation}\label{eee}
\frac{|\{y\in B_r\setminus B_{r/2}(x):\Gamma(y)<\Gamma(x)+(y-x)\cdot \nabla\Gamma(x)-C_0F(x)r^2/\ve_0\}|}{|B_r(x)\setminus B_{r/2}(x)|}\le \ve_0.
\end{equation}
Because of the concavity of $\Gamma$ and (\ref{eee}), we may apply  lemma \ref{l4.2} for $h=C_0F(x)r^2/\ve_0$. We obtain that
\[
\Gamma(y)\ge \Gamma(x)+(y-x)\cdot\nabla \Gamma(x)-C_0F(x)r^2/\ve_0
\]
for every $x\in B_{r/2}(x)$. At he same time,
\[
\Gamma(y)\le \Gamma(x)+(y-x)\cdot\nabla\Gamma(x)
\]
for every $y\in B_{r/2}(x)$ because of the concavity of $\Gamma$. Hence,
\[
|\Gamma(y)-\Gamma(x)-(y-x)\cdot\nabla\Gamma(x)|\le C_0F(x)r^2/\ve_0
\]
for every $y\in B_{r/2}(x)$. Since $F$ is a positive function, by Lemma 4.5 (ii) in \cite{KRS} , we have that
\[
|\nabla\Gamma(B_{r/4}(x))|\le (8C_0/\ve_0)^nF(x)^n|B_{r/4}(x)|.
\]
This completes the proof.
\end{proof}

Now, we can prove the following ABP estimate by using Corollary \ref{c4.1}. 

\begin{thm}\label{t4.1}(ABP estimate) Let $f$ is a continuous function and bounded by above and $s\in (1/2,1)$.  Suppose $\sup_{B_1}u<\infty$ and $u$ is a viscosity solution of
\[
 \mathcal M_{\mathcal K}^+ u(x)+ c^+ |\nabla u(x)|\ge-f(x)\quad in\,\,B_1,
\]
$u\le 0$ in $\R^n\setminus B_1$. Then
\[
\sup_{B_1}u^+\le C\|f\|_{L^\infty(B_1)}|B_1|^{1/n},
\]
where positive constant $C$ depending on $n, \rho_0,C_0, c^+,\ve_0$ but on on $s$. 
\end{thm}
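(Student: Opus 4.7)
The plan is to adapt the concave-envelope ABP argument of Caffarelli--Silvestre \cite{CS2} to the drift setting, exploiting that Lemma \ref{l4.1} and Corollary \ref{c4.1} already package the term $c^+|\nabla u|$ into the auxiliary function $F$. The novelty relative to the non-drift case is that $F$ involves $|\nabla\Gamma(x)|$, which must be controlled uniformly and then absorbed at the end.

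I would first set $M^\ast:=\sup_{B_1}u^+$ (assumed positive, else nothing to prove) and introduce the concave envelope $\Gamma$ of $u^+$ in $B_3$, extended by $0$ outside $B_3$. Since $u\le 0$ outside $B_1$, the contact set $\{u=\Gamma\}$ is essentially contained in $\overline{B_1}$, and $\Gamma$ is a nonnegative concave function on $B_3$ with $\Gamma\le M^\ast$ and $|\nabla\Gamma|\le C_n M^\ast$ throughout $B_1$ (from the fact that $\Gamma=0$ on $\partial B_3$).

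At each $x\in\{u=\Gamma\}$ the supporting hyperplane of $\Gamma$ at $x$ touches $u$ from above, so the hypotheses of Lemma \ref{l4.1} are met and Corollary \ref{c4.1} supplies a radius $r(x)\in(0,\rho_0 2^{-1/(2(1-s))})$ with
\[
|\nabla\Gamma(B_{r/4}(x))|\le (8C_0/\ve_0)^n F(x)^n |B_{r/4}(x)|.
\]
Apply Vitali's covering lemma to extract a countable disjoint subfamily $\{B_{r_k/4}(x_k)\}_k$ whose $5$-dilations cover $\{u=\Gamma\}$. A standard cone/envelope argument (using $\Gamma=0$ on $\partial B_3$ and $\max\Gamma\ge M^\ast$) shows $\nabla\Gamma(\{u=\Gamma\})\supseteq B_{cM^\ast}(0)$ for some $c=c(n)>0$. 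Chaining these,
\[
c_n (M^\ast)^n \le |\nabla\Gamma(\{u=\Gamma\})| \le \sum_k |\nabla\Gamma(B_{r_k/4}(x_k))| \le C\sum_k F(x_k)^n r_k^n.
\]

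To close the estimate I would select $\mu:=\|f\|_{L^\infty(B_1)}/c^+$ (the case $c^+=0$ being classical), which balances the two pieces of $F$. Using $(a+b)^{n-1}\le C_n(a^{n-1}+b^{n-1})$ in the definition of $F$ gives
\[
F(x)^n \le C_n\bigl((c^+)^n|\nabla\Gamma(x)|^n + \|f\|_{L^\infty(B_1)}^n\bigr).
\]
Inserting the pointwise bound $|\nabla\Gamma(x_k)|\le C_n M^\ast$ together with the disjointness bound $\sum_k r_k^n\le C_n|B_3|$ produces
\[
(M^\ast)^n \le C\bigl((c^+ M^\ast)^n + \|f\|_{L^\infty(B_1)}^n\bigr)|B_1|,
\]
after which a routine absorption yields the stated $M^\ast\le C\|f\|_{L^\infty(B_1)}|B_1|^{1/n}$ with $C=C(n,\rho_0,C_0,c^+,\ve_0)$.

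The main obstacle is handling the drift-induced gradient in $F$: unlike the non-drift proof where one simply has $F=|f|$, here $F$ couples $|\nabla\Gamma|$ with $c^+$ through the free parameter $\mu$, and the choice of $\mu$ is delicate because it must both tame the Young-type inequality used to derive $F$ in Lemma \ref{l4.1} and allow the summation $\sum F(x_k)^n r_k^n$ to be closed against $(M^\ast)^n$ on the left. The restriction $s\in(\tfrac{1}{2},1)$ and the normalization $C_0=\rho_0^{2(s-1)}C$ in Lemma \ref{l4.1} are what make the resulting constants independent of $s$ in this range.
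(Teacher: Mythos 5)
Your argument follows the standard Caffarelli--Silvestre outline until the final absorption, where there is a genuine gap. By step 7 you arrive at
\[
(M^\ast)^n \le C\bigl((c^+ M^\ast)^n + \|f\|_{L^\infty(B_1)}^n\bigr)|B_1|,
\]
and claim the term $C(c^+)^n|B_1|(M^\ast)^n$ can be absorbed into the left-hand side. This requires $C(c^+)^n|B_1|<1$, which is not at your disposal: $c^+$ is a fixed datum of the problem, $|B_1|=\omega_n$ is fixed, and the constant $C$ inherits the factor $(8C_0/\ve_0)^n$ from Corollary~\ref{c4.1}. Since $C_0=C\rho_0^{2(s-1)}$ with $2(s-1)<0$, shrinking $\rho_0$ actually makes $C_0$, hence $C$, \emph{larger}, so there is no free parameter left to force the coefficient below $1$. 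The choice $\mu=\|f\|/c^+$ that you propose balances the two factors inside $F$, but it does nothing to alleviate the $(M^\ast)^n$ on both sides. In short, bounding the Lebesgue measure $|\nabla\Gamma(B_1)|$ directly produces a multiplicative $(M^\ast)^n$ that you cannot decouple from the drift.

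The paper circumvents exactly this obstruction with the Gilbarg--Trudinger weight trick. Instead of estimating $|\nabla\Gamma(B_1)|$, it integrates $\frac{dp}{|p|^n+\mu^n}$ over $\nabla\Gamma(B_{r/4}(x))$. The crucial step (their displays (4.6)--(4.7)) shows that the weight $\frac{1}{|p|^n+\mu^n}$ is comparable, up to a constant controlled by choosing $\rho_0$ small (this is where $s>\tfrac12$ enters, since $rC_0\lesssim\rho_0^{2s-1}\to 0$), to $\frac{1}{|\nabla\Gamma(x)|^n+\mu^n}$, which then exactly cancels the $|\nabla\Gamma(x)|^n+\mu^n$ coming from $F(x)^n$. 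After summing over a covering, this yields $\log\bigl((M^\ast/\mu)^n+1\bigr)$ on the left and a bounded quantity on the right; exponentiating and choosing $\mu=\|f\|_{L^\infty}|B_1|^{1/n}$ gives the conclusion with no absorption needed, and the constant, though possibly exponentially large, is finite for all values of $c^+$. To repair your argument you would need to replace the Vitali-covering measure comparison by this weighted integral against $\frac{dp}{|p|^n+\mu^n}$ and redo the comparability step with $|p|^n+\mu^n$, which is precisely the content of the paper's proof.
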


\begin{proof} Here we follow some arguments in \cite{GT}.
Recalling $F(x)$ in Lemma \ref{l4.1} and (\ref{ee}), we have
\begin{equation}\label{1}
\frac{|\nabla\Gamma(B_{r/4}(x))|}{|\nabla\Gamma(x)|^n+\mu^n}\le \left(\frac{8C_0}{\ve_0}\right)^n\left((c^+)^n+\frac{f(x)^n}{\mu^n}\right)|B_{r/4}(x)|.
\end{equation}
On the other hand, by (\ref{e}) and Lemma \ref{l4.2}, we get
\[
\Gamma(y)\ge \Gamma(x)-(y-x)\cdot \nabla\Gamma(x)-\frac{C_0}{\ve_0}F(x)r^2, \quad y\in B_{r/2}(x).
\]
Moreover
\[
\Gamma(y)\le \Gamma(x)+(y-x)\cdot\nabla\Gamma(x)
\]
for every $y\in B_{r/2}(x)$ because of the concavity of $\Gamma$. Therefore, we get
\[
|p-\nabla\Gamma(x)|\le \frac{C_0}{\ve_0}F(x)r, \quad p\in\nabla\Gamma(B_{r/4}(x)).
\]
Hence, for $p\in\nabla\Gamma(B_{r/4}(x))$,
\[
|\nabla\Gamma(x)|\le |p|+ \frac{C_0}{\ve_0}F(x)r
\]
which implies
\begin{align*}
|\nabla\Gamma(x)|^n+\mu^n&\le C\left[|p|^n+\mu^n+\left(\frac{C_0r}{\ve_0}\right)^nF(x)^n\right]\\
&\le C\left[|p|^n+\mu^n+\left(\frac{C_0r}{\ve_0}\right)^n\left((c^+)^n+\frac{\|f\|_{L^\infty(B_1)}^n}{\mu^n}\right)\left(|\nabla\Gamma(x)|^n+\mu^n\right)\right].
\end{align*}
Notice that $C_0=C\rho_0^{2s-2}$ and $r\le \rho_02^{-\frac{1}{2-2s}}$, then $rC_0\le C\rho_0^{2s-1}$. Choosing $\rho_0$ small enough, we get that
\begin{equation}\label{2}
|\nabla\Gamma(x)|^n+\mu^n\le C[|p|^n+\mu^n]
\end{equation}
for all $p\in\nabla\Gamma(B_{r/4}(x))$. 

Consequently, from (\ref{1}) and (\ref{2}), we obtain that
\[
\int_{\nabla\Gamma(B_{r/4}(x))}\frac{dp}{p^n+\mu^n}\le \left(\frac{8C_0}{\ve_0}\right)^n\left((c^+)^n+\frac{f(x)^n}{\mu^n}\right)|B_{r/4}(x)|.
\]
Set $M=\sup_{B_1}u^+$. If $u^+ =0$ in $\R^n\setminus B_1$ and $u$ is upper semicontinuous,
there is $x_0\in B_1$ with $M=u^+(x_0)$. Next, we consider  a covering on $B_1$ by balls $B_{r_i/4}(x_i)=B_i$  and $0<r_i<4$ for $i=0,1,2,\cdots, m\in \mathbb{N}$, then we obtain that
\[
\int_{\nabla\Gamma(B_1)}\frac{dp}{p^n+\mu^n}\le \left(\frac{8C_0}{\ve_0}\right)^n\left((c^+)^n+\frac{f(x)^n}{\mu^n}\right)|B_i|.
\]
Since $B_{\frac{M}{4}}\subset \nabla\Gamma(B_1)$ (see Lemmas 9.2 and 9.4 in \cite{GT}), then we have
\[
\log\left(\left(\frac{M}{\mu}\right)^n+1\right)\le \left(\frac{8C_0}{\ve_0}\right)^n\left((c^+)^n+\frac{\|f(x)\|_{L^\infty(B_i)}^n}{\mu^n}\right)|B_i|.
\]
If $f^+\not\equiv0$, let $\mu=\|f(x)\|_{L^\infty(B_i)}|B_i|^{1/n}$, we have
\begin{align*}
\sup_{B_1}u^+&\le \left(\exp\left\{ \left(\frac{8C_0}{\ve_0}\right)^n\left((c^+)^n+1\right)\right\}-1\right)^{1/n}\|f\|_{L^\infty(B_i)}|B_i|^{1/n}\\
&\le C\|f\|_{L^\infty(B_1)}|B_1|^{1/n},
\end{align*}
where constant $C\ge1$.

 If $f\equiv 0$, we choosing $\mu>0$, by a similar argument as above and letting $\mu\rightarrow 0$. This completes the proof.
\end{proof}

\section{(H) condition}
In this section we prove a technical lemma in order to apply the Krein-Rutmann theorem, see the Appendix for details. In this context, to prove the (H) condition is equivalent the analyze the the existence of a bounded non-negative function $f$ and a corresponding viscosity solution of the equation \eqref{main.eq.1} such that $u\leq Kf$ in $\Omega$ for some positive constant $K$.

First, we prove that the function $\xi$ defined in \eqref{f.xi} satisfy the following properties with  $\beta_2$ is the value defined in Lemma \ref{lema.beta}.

\begin{lema} \label{lema.aux}
	Given $\beta\in (\beta_2,2s)$ and $s\in(\frac{1}{2},1)$, the function $\xi(x)$ satisfies
		$$
			\begin{array}{lll}
			(a)  & \mathcal{M}_\LL^-(\xi) \geq C & \mbox{ in } \Omega_\delta,\\
			(b) & \xi(x)=0 & \mbox{ in } \Omega^c,\\
			(c) & \xi(x)\leq  L & \mbox{ in } \Omega\setminus \Omega_\delta.
			\end{array}			
		$$
		where $C$ and $L$ are positive constants depending on $\delta$.
\end{lema}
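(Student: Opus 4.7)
The parts (b) and (c) are essentially immediate from the definition of $\xi$ in \eqref{f.xi}. Part (b) is built in: $\xi \equiv 0$ on $\R^n\setminus \Omega$. For (c), the function $\ell$ is positive and $C^2$ on the compact set $\overline{\Omega\setminus \Omega_\delta}$ by construction, so one takes $L:=\max_{\Omega\setminus \Omega_\delta}\ell<\infty$.

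The content of the lemma lies in part (a). The plan is to combine the one-sided estimate for the nonlocal Pucci operator provided by Lemma \ref{lema.m}(c) with the pointwise identity
$$
\mathcal{M}_\LL^-(\xi)(x)=\mathcal{M}_\KK^-(\xi)(x)-c^+|\nabla \xi(x)|
$$
coming from the hypothesis (H4). Since $\beta\in(\beta_2,2s)$, Lemma \ref{lema.m}(c) yields constants $C_1,\delta_1>0$ such that
$$
\mathcal{M}_\KK^-(\xi)(x)\ \geq\ C_1\, d(x)^{\beta-2s}\quad \text{for all }x\in \Omega_{\delta_1}.
$$
On $\Omega_{\delta_1}$ one has $\xi=d^\beta$, and since $d$ is Lipschitz with constant $1$, the pointwise bound $|\nabla \xi(x)|\leq \beta\, d(x)^{\beta-1}$ holds. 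Combining these estimates,
$$
\mathcal{M}_\LL^-(\xi)(x)\ \geq\ C_1\, d(x)^{\beta-2s}-c^+\beta\, d(x)^{\beta-1}\ =\ d(x)^{\beta-2s}\Bigl(C_1-c^+\beta\, d(x)^{2s-1}\Bigr).
$$

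The decisive point is that the hypothesis $s>\frac{1}{2}$ forces $2s-1>0$, so the gradient term is genuinely lower order near the boundary. Choose $\delta\in(0,\delta_1)$ small enough that $c^+\beta\,\delta^{2s-1}\leq C_1/2$; then for every $x\in \Omega_\delta$,
$$
\mathcal{M}_\LL^-(\xi)(x)\ \geq\ \frac{C_1}{2}\, d(x)^{\beta-2s}.
$$
Because $\beta-2s<0$ and $d(x)\leq \delta$ on $\Omega_\delta$, we have $d(x)^{\beta-2s}\geq \delta^{\beta-2s}$, and therefore
$$
\mathcal{M}_\LL^-(\xi)(x)\ \geq\ \frac{C_1}{2}\,\delta^{\beta-2s}\ =:\ C\ >\ 0\quad \text{in }\Omega_\delta,
$$
which proves (a). The resulting constant $C$ depends on $\delta$ (and on $\beta$, $n$, $s$, $c^+$), as claimed.

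The only step requiring care is verifying that the drift contribution, which blows up like $d^{\beta-1}$ as $d\to 0$, does not overwhelm the favorable nonlocal estimate, which blows up like the larger quantity $d^{\beta-2s}$. This is precisely where $s>\tfrac{1}{2}$ enters, mirroring the same condition exploited in the proof of Lemma \ref{lema.comp} and throughout Section 3: the nonlocal operator of order $2s$ dominates the first-order drift exactly when $2s>1$.
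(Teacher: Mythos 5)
Your proof is correct and follows essentially the same approach as the paper: invoke Lemma \ref{lema.m}(c) to bound $\MM^-_\KK(\xi)$ from below by $C\,d^{\beta-2s}$, subtract the drift contribution $c^+|\nabla\xi|\le c^+\beta\,d^{\beta-1}$, and use $2s>1$ to absorb the lower-order drift term for $\delta$ small. Your write-up is slightly more complete than the paper's, which stops at $\MM^-_\LL(\xi)\ge C\,d^{\beta-2s}$ and leaves implicit the final observation that $d^{\beta-2s}\ge\delta^{\beta-2s}$ on $\Omega_\delta$ (since $\beta-2s<0$) gives the constant lower bound, and it also spells out (b) and (c) which the paper treats as obvious.
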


\begin{proof}
Lemma \ref{lema.m} ensures that $\xi(x)$ satisfies the inequality
$$
	\MM^-_\KK \xi(x)\geq C d(x)^{\beta-2s} \quad \mbox{ in } \Omega_\delta
$$
provided that $\beta\in(\beta_2,2s)$, for $\delta>0$ small enough. Moreover,  since $\xi=0$ in $\Omega^c$ we have that $D\xi(x)= \beta d(x)^{\beta-1}$. We get
\begin{align} \label{in.1}
\begin{split}
	\MM^-_\LL(\xi(x))&= \MM^-_\KK(\xi(x))-c^+ |D\xi(x)|\\
	&\geq Cd(x)^{\beta-2s}-\beta d(x)^{\beta-1}\\
	&\geq C d(x)^{\beta-2s} 
\end{split}	
\end{align}
in $\Omega_\delta$ provided that $s>1/2$.
\end{proof}

\medskip

\begin{lema}[(H) condition] \label{lema.cond}
	There exist a non-negative function $f$ and a positive constant $K$  such that
	$$
		u\geq Kf \quad \mbox{in } \Omega,
	$$
	where $u$ is a viscosity solution of \eqref{main.eq.1}. 
\end{lema}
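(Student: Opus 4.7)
The plan is to exhibit a specific non-negative right-hand side $f$, solve the associated Dirichlet problem \eqref{main.eq.1}, and use the barrier $\xi$ from Lemma \ref{lema.aux} as a viscosity sub-solution to bound the resulting $u$ from below by a multiple of $f$.

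First, I would fix $\delta>0$ as in Lemma \ref{lema.aux} and choose $f\in C(\bar\Omega)$ with $0\leq f\leq 1$, $f\equiv 0$ on $\Omega_{\delta/2}$, and $f\equiv 1$ on $\Omega\setminus\Omega_\delta$. By Theorem \ref{t2.6}, the problem \eqref{main.eq.1} with this right-hand side admits a unique viscosity solution $u\in C(\bar\Omega)\cap L^1(\omega_s)$, and comparing with the trivial sub-solution $w\equiv 0$ (valid since $-I(0)=0\leq f$) yields $u\geq 0$ in $\R^n$.

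Next, I would verify that $v:=\eta\xi$ is a viscosity sub-solution of $-Iw=f$ in $\Omega$ for all sufficiently small $\eta>0$. Using that $I(0)=0$ (from the structure \eqref{opp}), ellipticity of $I$ with respect to $\LL$, and the positive $1$-homogeneity (H3),
\[
I(\eta\xi)\ =\ I(\eta\xi)-I(0)\ \geq\ \MM_{\LL}^-(\eta\xi)\ =\ \eta\,\MM_{\LL}^-\xi.
\]
In $\Omega_\delta$, part (a) of Lemma \ref{lema.aux} gives $\MM_{\LL}^-\xi\geq C>0$, so $-I(\eta\xi)\leq -\eta C\leq 0\leq f$. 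On the compact set $\Omega\setminus\Omega_\delta$, $\xi$ is $C^2$ with compact support in $\bar\Omega$ and lies in $L^1(\omega_s)$, so $I\xi$ is uniformly bounded there by some constant $M$; choosing $\eta\leq 1/M$ gives $-I(\eta\xi)\leq\eta M\leq 1=f$. Since $v\equiv 0\equiv u$ on $\R^n\setminus\Omega$, the comparison principle then yields $u\geq\eta\xi$ in $\Omega$.

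To conclude, I would set $\xi_0:=\inf_{\Omega\setminus\Omega_{\delta/2}}\xi$, which is strictly positive since $\xi=d^\beta\geq(\delta/2)^\beta$ on $\Omega_\delta\setminus\Omega_{\delta/2}$ and $\xi=\ell$ is uniformly positive on $\Omega\setminus\Omega_\delta$. On $\Omega_{\delta/2}$ we have $u\geq 0=f$, while on $\Omega\setminus\Omega_{\delta/2}$, using $f\leq 1$, we get $u\geq\eta\xi\geq\eta\xi_0\geq(\eta\xi_0)f$. Hence $u\geq Kf$ globally with $K=\eta\xi_0>0$. The hard part will be checking that $\eta\xi$ is a genuine global sub-solution: in $\Omega_\delta$ this is precisely the content of Lemma \ref{lema.aux}(a), whose proof in turn required $s>1/2$ to dominate the drift by the non-local term, whereas in $\Omega\setminus\Omega_\delta$ one must confirm that the non-local quantity $I\xi$ is finite and uniformly controlled, which follows from $\xi\in C^2(\Omega)\cap L^1(\omega_s)$ and the continuity of $I$ on sufficiently regular inputs.
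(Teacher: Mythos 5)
Your proposal is correct, and it establishes the (H) condition by a route that differs from the paper's in a few substantive ways, so let me compare them. The paper takes an arbitrary non-negative $f$, invokes the strong maximum principle (Theorem \ref{teo.pm}) to conclude $u>0$ in $\Omega$, and then sets $K:=L^{-1}\inf_{\Omega\setminus\Omega_\delta}u$; it shows $K\xi$ is a sub-solution of $-Iw=f$ \emph{only in $\Omega_\delta$}, arranges $K\xi\leq u$ on $\R^n\setminus\Omega_\delta$ by the choice of $K$, and applies comparison on the thin collar $\Omega_\delta$ to obtain $u\geq K\xi$ throughout. You instead fix an explicit $f$ that vanishes on $\Omega_{\delta/2}$ and is bounded by $1$, and you turn $\eta\xi$ into a \emph{global} sub-solution on all of $\Omega$ by exploiting that $I\xi$ is continuous (hence uniformly bounded) on the compact set $\Omega\setminus\Omega_\delta$ and taking $\eta$ small. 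This lets you run the comparison principle on the whole domain rather than on a collar, and it bypasses the strong maximum principle entirely. Your route has an additional advantage: because you chose $f$ supported away from $\partial\Omega$ and bounded, the conclusion $u\geq\eta\xi$ immediately converts into the required $u\geq Kf$ with $K=\eta\xi_0$; the paper's proof terminates at $u\geq K\xi$ and leaves implicit the final identification (one must take $f=\xi$, or $f$ dominated by a multiple of $\xi$, to close the (H) condition). So your argument is, if anything, more self-contained and more explicitly aligned with the abstract hypothesis $f\preceq M\,Tf$ of the Krein--Rutman theorem; the paper's version is shorter but relies on the strong maximum principle and a little reader goodwill in the last step.

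One small point worth making explicit in a polished write-up: the uniform bound on $I\xi$ over $\Omega\setminus\Omega_\delta$ comes from property (3) of Definition 2.1 together with compactness of $\overline{\Omega\setminus\Omega_\delta}$ inside $\Omega$; since $\xi\in C^2$ in a neighborhood of that compact set and $\xi\in L^1(\omega_s)$, $I\xi$ is continuous there and hence bounded. You gesture at this at the end of your proof; it is the right justification and closes the only potential gap.
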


\begin{proof}
	Given $f\geq 0$, let $u$ be a nontrivial viscosity solution of \eqref{main.eq.1}, that is 
	$$
		-I(u)=f \mbox{ in } \Omega,\qquad u=0 \mbox{ in }\Omega^c.
	$$
	By using the Strong maximum principle stated in Theorem \ref{teo.pm}  it follows that $u>0$ in $\Omega$. We define 
	$$
	w:=K\xi, \quad \mbox{ where	} \quad K:= L^{-1}\inf_{x\in \Omega\setminus \Omega_\delta }u(x).
	$$
	where $\xi$ is the function defined in \eqref{f.xi} and $L=L(\delta)$ is the constant given in Lemma \ref{lema.aux}.	From property (a) of Lemma \ref{lema.aux} it follows that
	$$
		I(w)\geq \mathcal{M}_\LL^-(w)= K\mathcal{M}_\LL^-(\xi)\geq KC \geq 0 \quad \mbox{ in } \Omega_\delta
	$$
	which implies that $w$ is a sub-solution of $-I(u)=f$ in $\Omega_\delta$ since
	$$
		-I(w)\leq 0 \leq f=-I(u)  \quad \mbox{ in } \Omega_\delta.
	$$
	Property (b) of Lemma \ref{lema.aux} leads to $w=0\leq u$ in $\Omega^c$.	Moreover, property (c) gives that
	$$
		w\leq K L \leq u \mbox{ in } \Omega\setminus \Omega_\delta.
	$$
	Therefore, from the Comparison principle given in Lemma \ref{lema.comp} it follows that $w\leq u$ in $\Omega$, and hence we finally obtain that $u\geq K\xi$ in $\Omega$.

\end{proof}

\section{Proof of main results}
This section is devoted to prove Theorems \ref{teo.main} and \ref{teo.main 2}. We start this section by a maximum principle in small domains.
\begin{thm}\label{t1}
Let $f$ is a continuous function and bounded by above. There exists $\varepsilon_0>0$, depending on $n,\lambda,\Lambda,s,c^+$ and $|\Omega|$, such that if $|\Omega|\leq \varepsilon_0$ then for any $u\in LSC(\Omega)\cap L^1(\omega_s)$ and bounded by above,
\begin{equation*} 
\begin{cases}
	\mathcal{M}^-_{\LL}u\le f &\qquad \mbox{ in }\Omega,\\
	u\ge 0  &\qquad \mbox{ in }\R^n\setminus \Omega
\end{cases}	
\end{equation*}
implies $u\geq0$ in $\Omega$.
\end{thm}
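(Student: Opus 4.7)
The approach is to reduce the statement to an application of the ABP estimate of Theorem \ref{t4.1} by the standard sign flip. Setting $v:=-u$, we have $v\in USC(\Omega)\cap L^1(\omega_s)$, $v\le 0$ in $\R^n\setminus\Omega$, and (since $u$ is $LSC$ and bounded above on the compact set $\bar\Omega$) $v$ is bounded above. Using $\mathcal{M}^\pm_\LL w=\mathcal{M}^\pm_\KK w\pm c^+|Dw|$ together with the identity $\mathcal{M}^-_\KK(-w)=-\mathcal{M}^+_\KK w$, the hypothesis $\mathcal{M}^-_\LL u\le f$ translates, in the viscosity sense, to
\[
\mathcal{M}^+_\KK v+c^+|Dv|\ge -f\quad\text{in }\Omega,
\]
which is precisely the inequality handled by Theorem \ref{t4.1}. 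The rigorous passage from the inequality on $u$ to that on $v$ in the viscosity sense is an application of the elliptic comparison between $I$ and $\mathcal{M}^\pm_\LL$ built into the notion of ellipticity (and used throughout \cite{CL}, see also Theorem \ref{teo.sol}).

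Since Theorem \ref{t4.1} is stated on $B_1$, I would next rescale. Fix $R>0$ with $\Omega\subset B_R(0)$ and set $\tilde v(y):=v(Ry)$. A direct calculation using the scaling of the kernels yields $\mathcal{M}^+_\KK\tilde v(y)=R^{2s}\mathcal{M}^+_\KK v(Ry)$ and $|D\tilde v(y)|=R\,|Dv(Ry)|$, so that $\tilde v$ satisfies
\[
\mathcal{M}^+_\KK\tilde v+(c^+R^{2s-1})|D\tilde v|\ge -R^{2s}f(R\,\cdot)\quad\text{in }B_1,
\]
with $\tilde v\le 0$ outside $B_1$. Under the assumption $s>\tfrac12$, the effective drift constant $c^+R^{2s-1}$ remains controlled as $R\to 0$, so Theorem \ref{t4.1} applies and, after undoing the scaling, delivers an estimate of the form
\[
\sup_\Omega v^+\le C(n,\lambda,\Lambda,s,c^+)\,|\Omega|^{\alpha}\,\|f^+\|_{L^\infty(\Omega)},
\]
for some $\alpha>0$ dictated by the nonlocal scaling.

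Finally, the smallness condition $|\Omega|\le\varepsilon_0$ with $\varepsilon_0$ chosen in terms of the listed quantities (and the upper bound on $f$) forces the right-hand side above to vanish or be absorbable, giving $\sup_\Omega v^+=0$, hence $u\ge 0$ in $\Omega$. Explicitly, when $f\le 0$ this is immediate from the estimate; in the applications of this MP to the eigenvalue framework of Section~6, $f$ plays the role of a lower-order term proportional to $u$ itself, and the smallness of $|\Omega|^\alpha$ lets one absorb $\sup_\Omega v^+$ from the right-hand side. The main technical obstacle is to verify that the constant coming from Theorem~\ref{t4.1} is stable under rescaling; this is precisely the reason why the restriction $s>1/2$ is essential, since only then does the rescaled drift coefficient $c^+R^{2s-1}$ stay uniformly bounded (in fact, tend to zero) as $R\to 0$.
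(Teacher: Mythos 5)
Your core reduction — flip the sign, set $v=-u$, and invoke the ABP estimate of Theorem~\ref{t4.1} — is exactly the paper's strategy, and your caveat about the role of $f$ is well taken: the statement does not close for an arbitrary $f$, and in the paper's own proof the right-hand side is tacitly replaced by $\sup_\Omega u^-$ (i.e.\ $f$ is implicitly $0$ or a zeroth-order term $\le c\,u^-$, which is precisely how Theorem~\ref{t1} is used inside Theorem~\ref{t2}). On that point you and the paper are in the same place.

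The genuine gap is in the rescaling step. Enclosing $\Omega\subset B_R$ and scaling $y\mapsto Ry$ does yield $\mathcal{M}^+_\KK\tilde v+c^+R^{2s-1}|D\tilde v|\ge -R^{2s}f(R\,\cdot)$ on $B_1$, and applying Theorem~\ref{t4.1} there gives $\sup_\Omega v^+\le C\,R^{2s}\|f\|_{L^\infty}$. That bound depends on the \emph{radius} $R$ of a containing ball, not on $|\Omega|$, and you cannot convert the one into the other: for a long thin domain $|\Omega|$ can be arbitrarily small while $R$ stays of order one. Yet the statement (and, crucially, the application in Theorem~\ref{t2}, where the maximum principle is invoked on $\Omega\setminus K$ for a compact $K$ exhausting $\Omega$ in measure) needs smallness of $|\Omega|$, not of the diameter. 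So the passage ``after undoing the scaling, delivers an estimate of the form $\sup_\Omega v^+\le C|\Omega|^\alpha\|f^+\|_\infty$'' does not follow from your computation. What the paper actually intends is not a rescaling at all, but a direct extension of the proof of Theorem~\ref{t4.1} from $B_1$ to a general bounded domain $\Omega$; that proof is local and measure-theoretic (Lemmas~\ref{l4.1}--\ref{l4.2}, Corollary~\ref{c4.1}, and the $\log\bigl((M/\mu)^n+1\bigr)\le \cdots|B_i|$ covering step), and it produces the bound $\sup_\Omega v^+\le C\|f\|_{L^\infty(\Omega)}|\Omega|^{1/n}$ with the correct measure dependence. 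Replace your rescaling by that direct extension of the ABP argument and the proof goes through as you intended.
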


\begin{proof}
 Let $f=\sup_{\Omega} u^-$,
then by the ABP estimate (we just need to  extend the ABP estimate in unit ball which is proved in Theorem \ref{t4.1} to a general domain $\Omega$) we have that
\[
\sup_{\Omega}u^-\le C |\Omega|^{1/n} \sup_{\Omega} u^-\le \frac{1}{2}\sup_{\Omega} u^-
\]
if $|\Omega|$ small. Hence we get $u^-\equiv 0$ in $\Omega$ which means $u\ge0$ in $\Omega$. 
\end{proof}

The following  theorem is needed.
\begin{thm}\label{t2}
Suppose $u,v \in  C(\bar{\Omega})\cap L^1(\omega_s)$ and $f\in  C(\Omega)$ satisfy
\begin{equation*} 
\begin{cases}
	Iu\le f &\qquad \mbox{ in }\Omega,\\
	u>0  &\qquad \mbox{ in }\Omega,\\
	u\ge0 &\qquad \mbox{ in }\R^n\setminus\Omega,
\end{cases}	
\quad resp.
\begin{cases}
	Iu\ge f &\qquad \mbox{ in }\Omega,\\
	u<0  &\qquad \mbox{ in }\Omega,\\
	u\le0 &\qquad \mbox{ in }\R^n\setminus\Omega
\end{cases}
\end{equation*}
and 
\begin{equation*} 
\begin{cases}
	Iv\ge f &\qquad \mbox{ in }\Omega,\\
	v\le0  &\qquad \mbox{ in }\R^n\setminus\Omega,\\
	v(x_0)>u(x_0),
\end{cases}	
\quad resp.
\begin{cases}
	Iv\le f &\qquad \mbox{ in }\Omega,\\
	v\ge0  &\qquad \mbox{ in }\R^n\setminus\Omega,\\
	v(x_0)<u(x_0),
\end{cases}
\end{equation*}
for some point $x_0\in\Omega$ and $f\le0$ (resp. $f\ge0$). Then $u\equiv tv$ for some $t>0$.
\end{thm}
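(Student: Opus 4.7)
My plan is a contact-point (touching) argument combining the ellipticity of $I$ with respect to $\MM^-_\LL$ (Theorem \ref{teo.sol}), the positive $1$-homogeneity (H3), and the strong maximum principle (Theorem \ref{teo.pm}). I will treat the first case in detail; the ``resp.'' case is handled symmetrically by working with $W := v - \tau u$ in place of $\tau u - v$ and using $f \geq 0$ in place of $f \leq 0$.

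I will define
$$
	\tau^* := \inf\{\tau > 0 \,:\, \tau u \geq v \text{ on } \R^n\}.
$$
Since $u \geq 0$ and $v \leq 0$ outside $\Omega$, the constraint $\tau u \geq v$ is automatic there, so $\tau^*$ is determined by the behavior on $\bar\Omega$. Continuity of $v$ on $\bar\Omega$ combined with $v \leq 0$ on $\R^n \setminus \Omega$ forces $v \leq 0$ on $\partial\Omega$, which places $\{v>0\}$ strictly inside $\Omega$ with $u$ bounded below by a positive constant there; this, together with the continuity of $u$ and $v$, yields $\tau^* < +\infty$. On the other hand, the hypothesis $v(x_0) > u(x_0) > 0$ forces $\tau^* \geq v(x_0)/u(x_0) > 1$. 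Setting $W := \tau^* u - v \geq 0$ and using $I(\tau^* u) = \tau^* Iu \leq \tau^* f$ from (H3), Theorem \ref{teo.sol} gives
$$
	\MM^-_\LL W \;\leq\; I(\tau^* u) - Iv \;\leq\; (\tau^* - 1)\, f \;\leq\; 0 \qquad \text{in } \Omega
$$
in the viscosity sense, since $\tau^* > 1$ and $f \leq 0$. Applying the strong maximum principle (Theorem \ref{teo.pm}) to $W\geq 0$ then produces the dichotomy: either $W \equiv 0$ in $\Omega$---in which case $v = \tau^* u$ and the theorem follows with $t = 1/\tau^* > 0$---or $W > 0$ throughout $\Omega$.

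The main obstacle is to exclude the alternative $W > 0$ in $\Omega$, because the infimum $0$ of $W$ on $\bar\Omega$ could a priori be achieved only on $\partial\Omega$, at points where necessarily $u = v = 0$. To rule this out I will contradict the minimality of $\tau^*$ via a Hopf-type barrier argument. Taking the barrier $\xi$ from \eqref{f.xi} with $\beta \in (\beta_2, 2s)$, Lemma \ref{lema.aux}(a) gives $\MM^-_\LL \xi \geq C_0 > 0$ in $\Omega_\delta$, so Theorem \ref{teo.sol} applied to $W$ and $c\xi$ yields $\MM^-_\LL(W - c\xi) \leq -cC_0$ in $\Omega_\delta$. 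Choosing $c>0$ small enough that $W \geq c\xi$ on $\R^n \setminus \Omega_\delta$ (possible since $W$ has a positive minimum on the compact set $\bar\Omega \setminus \Omega_\delta$ where $\xi$ is bounded, while $\xi \equiv 0$ outside $\Omega$), the narrow-domain maximum principle of Theorem \ref{t1}, valid in $\Omega_\delta$ for $\delta$ sufficiently small, yields the Hopf-type lower bound $W \geq c\, d(x)^\beta$ in $\Omega_\delta$. Paired with the boundary upper bound $u(x) \leq C_1\, d(x)^\beta$ near $\partial\Omega$---obtained from the barrier argument of Lemma \ref{lema.comp} applied to the sub-solution $u$, which vanishes at the contact boundary points---one then has $W - \varepsilon u \geq (c - \varepsilon C_1)\, d(x)^\beta \geq 0$ in $\Omega_\delta$ for $\varepsilon > 0$ small, while on $\bar\Omega \setminus \Omega_\delta$ the positive minimum of $W$ absorbs the perturbation $\varepsilon u$, and $(\tau^* - \varepsilon)u - v \geq -v \geq 0$ outside $\Omega$. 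This shows that $\tau^* - \varepsilon$ is admissible in the definition of $\tau^*$, a contradiction. Hence $W \equiv 0$ and the theorem is proved.
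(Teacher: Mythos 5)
Your overall strategy—slide $\tau u$ over $v$ until the first touch, then derive a contradiction—is the same as the paper's, but you try to handle the touching-at-the-boundary case with a Hopf-type barrier estimate, whereas the paper handles it with the narrow-domain maximum principle (Theorem \ref{t1}) applied on a thin shell $\Omega\setminus K$. Unfortunately, your barrier route has a genuine exponent obstruction, and there is also an earlier gap in establishing $\tau^*<\infty$.

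\textbf{Exponent mismatch in the Hopf step.} To get $\mathcal{M}^-_\LL\xi\ge C_0>0$ near $\partial\Omega$ you must take $\xi=d^\beta$ with $\beta\in(\beta_2,2s)$ (Lemma \ref{lema.aux}(a), built on Lemma \ref{lema.m}(c)); this is forced, since for $\beta<\beta_2$ the sign flips. On the other hand, the boundary upper bound on the subsolution $u$ that the paper actually proves (Lemma \ref{lema.comp} and Lemma \ref{regu.b}) is $u\le C_1 d^{\beta'}$ only for $\beta'\in(0,\beta_1)$, because that barrier needs $\mathcal{M}^+_\KK\xi\le -C d^{\beta'-2s}$, which requires $\beta'<\beta_1$. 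Since $\beta_1\le\beta_2$, you necessarily have $\beta'<\beta$. Then $W\ge c\,d^\beta$ and $u\le C_1 d^{\beta'}$ with $\beta>\beta'$ give, near $\partial\Omega$,
$$
W-\varepsilon u \;\ge\; c\,d^\beta - \varepsilon C_1\, d^{\beta'} \;=\; d^{\beta'}\bigl(c\,d^{\beta-\beta'}-\varepsilon C_1\bigr),
$$
and the right-hand side becomes negative as $d\to 0$ for \emph{every} fixed $\varepsilon>0$. So the estimates in the paper are simply not sharp enough to run a Hopf–comparison; you would need matching exponents (as in the local theory, where both are linear in $d$), which here would amount to sharp $d^s$ boundary behavior that the paper does not prove. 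Your line ``$W-\varepsilon u\ge(c-\varepsilon C_1)d^\beta$'' silently uses the same $\beta$ in both bounds, which is not available.

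\textbf{Gap in $\tau^*<\infty$.} You argue that $v\le 0$ on $\partial\Omega$ ``places $\{v>0\}$ strictly inside $\Omega$ with $u$ bounded below by a positive constant there.'' This does not follow: $v$ continuous on $\bar\Omega$ with $v\le 0$ on $\partial\Omega$ only implies $\{v\ge\varepsilon\}$ is compact for each $\varepsilon>0$; the open set $\{v>0\}$ may well accumulate at $\partial\Omega$, where $u$ tends to $0$. So $\inf_{\{v>0\}}u$ can be $0$, and $\sup_{\{v>0\}}v/u$ need not be finite a priori. This is precisely the kind of boundary difficulty the Hopf lemma is usually invoked for, but since the Hopf step itself does not go through here (see above), you cannot bootstrap.

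Both gaps are closed simultaneously by the device in the paper: fix a compact $K\subset\Omega$ with $|\Omega\setminus K|\le\varepsilon_0$ (the threshold in Theorem \ref{t1}). For $z_t=v-tu$, one has $z_t<0$ on $K$ for large $t$ (compactness of $K$ gives $\min_K u>0$), while for $t\ge 1$, $\mathcal{M}^+_\LL z_t\ge(1-t)f\ge 0$ and $z_t\le 0$ on $\R^n\setminus(\Omega\setminus K)$; Theorem \ref{t1} then forces $z_t\le 0$ on the thin set $\Omega\setminus K$ as well, without any boundary rate comparison. The sliding to the critical $\tau$ and the contradiction with its minimality are carried out entirely on the compact $K$, where continuity suffices—no Hopf estimate is ever needed. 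I would encourage you to rewrite the proof along those lines.
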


\begin{proof}
Let $u,v$ satisfy the first set of inequalities in Theorem \ref{t2}. Take a compact set $K\subset\Omega$ such that $|\Omega\setminus K|\le \varepsilon_0$, where $\varepsilon_0$
is given in Theorem \ref{t1}. Set $z_t=v-tu$. If $t$ is large enough $z_t<0$ in $K$.  For $t\ge 1$, we have
\[
\mathcal{M}^+_\LL z_t\ge Iv-tIu=(1-t)f\ge0\quad{\rm in}\,\,\Omega
\]
and $z_t\le0$ in $\R^n\setminus(\Omega\setminus K)$, by using Theorem \ref{t1} we get $z_t\le0$ in $\Omega\setminus K$ and thus $z_t\le0$ in $\Omega$.
So, by the strong maximum principle, either $z_t\equiv0$ in $\Omega$ in which case we are done, or $z_t<0$ in $\Omega$. We define
\[
\tau=\inf\{t\,\,|\,\, z_t<0\,\,{\rm in}\,\,\Omega\}.
\]
Since $v(x_0)>u(x_0)$ we have $\tau>1$. Now we repeat the same argument for $z_\tau$. So, either $z_\tau\equiv0$ in $\Omega$ in which case we are done or $z_\tau<0$ in $\Omega$.
In this case there exists $\eta>0$ such that $z_{\tau-\eta}<0$ in $K$. Now we repeat again the same argument for $z_{\tau-\eta}$, which yields a contradiction with the definition of $\tau$.

If the inequalities satisfies by $u,v$ are reversed (second set of inequalities in Theorem \ref{t2}), we consider the function $tu-v$ and the same argument.
\end{proof}

\begin{rem}\label{r}
Here we remark that if $f\equiv 0$ in Theorem \ref{t2}, we just need $v(x_0)>0$ instead of $v(x_0)>u(x_0)$ (resp. $v(x_0)<0$ instead of $v(x_0)<u(x_0)$).
See also  Theorem 4.2 in \cite{QS} for local case.
\end{rem}

A consequence of Theorem \ref{t2} is an upper bound the of the principal half-eigenvalue in terms of thickness of the domain. For each $\rho\in\R$, we define a nonlinear operator $G_\rho$
by 
\[
G_\rho(u)=-Iu-\rho u.
\]
We say the operator $G_\rho$ satisfies the maximum principle in $\Omega$ if , whenever $v\in LSC(\Omega)\cap L^1(\omega_s)$ is a solution of $G_\rho v\le 0$ in $\Omega$ with $v\le 0$ in $\R^n\setminus \Omega$, we have $v\le0$ in $\Omega$; Similarly,  We say the operator $G_\rho$ satisfies the minimum principle in $\Omega$ if , whenever $v\in LSC(\Omega)\cap L^1(\omega_s)$ is a solution of $G_\rho v\ge 0$ in $\Omega$ with $v\ge 0$ in $\R^n\setminus \Omega$, we have $v\ge0$ in $\Omega$.

Define constants
\[
\mu^+(I,\Omega)=\sup\{\rho: G_\rho \mbox{ satisfies the maximum principle in }\Omega\},
\]
and
\[
\mu^-(I,\Omega)=\sup\{\rho: G_\rho \mbox{ satisfies the minimum principle in }\Omega\},
\]
We will eventually show $\lambda_1^{\pm}(I,\Omega)=\mu^{\pm}(I,\Omega)$. The following  lemma is the first step in this direction.

\begin{lema}\label{p1}
We have
\[
\lambda_1^{\pm}(I,\Omega)\le\mu^{\pm}(I,\Omega)<\infty.
\]
\end{lema}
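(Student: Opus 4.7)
The plan is to prove the two bounds separately. For the lower bound $\lam^+_1(I,\Omega) \le \mu^+(I,\Omega)$, my strategy is to fix $\rho < \lam_1^+(I,\Omega)$, pick an intermediate $\lam \in (\rho, \lam_1^+(I,\Omega))$ together with a positive $v \in C(\bar\Omega) \cap L^1(\omega_s)$ coming from the definition of $\lam_1^+$ (so $v>0$ in $\Omega$, $v \ge 0$ outside, $Iv + \lam v \le 0$ in $\Omega$), and argue by contradiction. If $G_\rho$ fails the maximum principle there is $u \in LSC(\Omega) \cap L^1(\omega_s)$ with $-Iu - \rho u \le 0$ in $\Omega$, $u \le 0$ outside, and $u(x_0) > 0$ at some interior point $x_0$.

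The key is to pass to the shifted operator $J := I + \rho(\cdot)$, which is still positively $1$-homogeneous and elliptic, its Pucci extremals being $\MM_\LL^\pm + \rho$. Under $J$ the assumptions become $Jv \le -(\lam - \rho)v < 0$ in $\Omega$ and $Ju \ge 0$ in $\Omega$. I would then apply Theorem \ref{t2} together with Remark \ref{r} to $J$ with $f \equiv 0$, taking the role of the ``$u$'' of the theorem by our $v$ and the role of ``$v$'' by our $u$; the hypothesis $u(x_0)>0$ furnishes the positivity that replaces the strict inequality $v(x_0)>u(x_0)$ via the remark. The conclusion is $v \equiv tu$ for some $t>0$, and the $1$-homogeneity of $J$ then forces $Jv = tJu \ge 0$, contradicting $Jv<0$ where $v>0$.

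For the finiteness $\mu^+(I,\Omega) < \infty$, my plan is to exhibit a single function that violates the maximum principle for $G_\rho$ once $\rho$ is large. The natural candidate is the barrier $\xi$ from \eqref{f.xi} with $\beta \in (\beta_2, 2s)$. Lemma \ref{lema.aux}(a) gives $\MM_\LL^- \xi \ge C > 0$ in $\Omega_\delta$, so $I\xi \ge C$ there; on the complement $\Omega \setminus \Omega_\delta$ the function $\xi$ is smooth and bounded below by some $\xi_0 > 0$ and, by the continuity property of $I$ on this compact set, satisfies $|I\xi| \le M$. Choosing $\rho_0 := M/\xi_0$ one checks pointwise (hence in the viscosity sense, since $\xi$ is $C^2$ in $\Omega$) that $G_{\rho_0}\xi = -I\xi - \rho_0\xi \le 0$ on all of $\Omega$, while $\xi > 0$ in $\Omega$ and $\xi \equiv 0$ outside. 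The maximum principle for $G_{\rho_0}$ therefore fails, and since $G_\rho\xi = G_{\rho_0}\xi + (\rho_0-\rho)\xi \le 0$ for every $\rho \ge \rho_0$ (using $\xi \ge 0$), it also fails for every such $\rho$. Hence the set in the definition of $\mu^+$ is contained in $(-\infty, \rho_0)$, giving $\mu^+(I,\Omega) \le \rho_0 < \infty$. The corresponding inequalities for $\mu^-$ follow by the same scheme applied to $-u$ and $-\xi$.

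The point I expect to be delicate is the application of Theorem \ref{t2} and Remark \ref{r} to the shifted operator $J$ rather than to $I$ itself. This reduces to checking that the three ingredients of their proofs---the small-domain maximum principle (Theorem \ref{t1}), the strong maximum principle, and the difference bound Theorem \ref{teo.sol}---still hold after adding a bounded zero-order potential $\rho$. Only the first is genuinely delicate: the ABP estimate of Theorem \ref{t4.1} picks up an extra $\rho u$ term, but this can be absorbed into the right-hand side provided the measure of the small domain is taken sufficiently small depending on $|\rho|$, which is precisely what the small-domain argument already permits.
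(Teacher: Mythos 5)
Your proof of $\lambda_1^{\pm}\le\mu^{\pm}$ follows the same route as the paper: both apply Theorem \ref{t2} with Remark \ref{r} to a zero-order shift of $I$ by $\rho$ (the paper takes $\rho_1,\rho_2$ with $\mu^+<\rho_1<\rho_2<\lambda_1^+$ and writes ``since $\rho_1 v_2<\rho_2 v_2$ we may apply Theorem \ref{t2}'', which unpacks to exactly your application with $J=I+\rho_1$ and $f\equiv0$). You make the shift explicit, which is a clarifying improvement; and you correctly flag the real delicacy, namely that Theorem \ref{t2} (and behind it Theorem \ref{t1}) was only stated and proved for $I$ without a zero-order term, so one must verify the small-domain maximum principle survives the perturbation. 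The paper silently skips this step, so your remark is not a defect of your argument but a gap you have noticed in the paper's. Your absorption argument is the right idea: in the ABP estimate the right-hand side $f=\rho u$ is only sampled at contact points of the concave envelope, where $u\ge0$, so $\|f\|_{L^\infty}$ is controlled by $\rho\sup u^+$ and can be absorbed for $|\Omega|$ small depending on $|\rho|$. One remark: for the final contradiction you can avoid using the shifted $\mathcal{M}^-_{\mathcal L,J}$ altogether by applying Theorem \ref{teo.sol} directly to $I$ — from $v\equiv tu$ you get $Iv\le-\lambda v$ and $Iv=tIu\ge-\rho v$, hence $0=\mathcal{M}^-_{\mathcal L}(v-v)\le(\rho-\lambda)v<0$ at points where $v>0$, a contradiction.

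For $\mu^{\pm}<\infty$ you take a genuinely different construction. The paper solves an auxiliary Dirichlet problem $-Iw=h$ for a compactly supported $h$ with a sign, invokes Theorem \ref{t2.6} for existence, comparison, and the strong maximum principle to get strict sign of $w$, and then chooses $\rho_0$ so that $\rho_0 w$ is dominated by $h$. You instead use the explicit barrier $\xi$ of \eqref{f.xi} with $\beta\in(\beta_2,2s)$ and Lemma \ref{lema.aux}(a), which gives $I\xi\ge\mathcal{M}^-_{\mathcal L}\xi\ge C>0$ in $\Omega_\delta$, and then exploit that $I\xi$ is continuous and bounded on the compact core $\Omega\setminus\Omega_\delta$ where $\xi$ is bounded below. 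This is shorter and more self-contained, since it bypasses existence theory entirely and reuses the barrier already constructed for the (H) condition. One small caveat: the final sentence ``the corresponding inequalities for $\mu^-$ follow by the same scheme applied to $-u$ and $-\xi$'' is a bit quick, since $I$ is nonlinear and $I(-\xi)\ne -I\xi$; what one actually needs is that $I(-\xi)\le\mathcal{M}^+_{\mathcal L}(-\xi)=-\mathcal{M}^-_{\mathcal K}\xi+c^+|\nabla\xi|\le -C'd^{\beta-2s}<0$ in $\Omega_\delta$ for $s>1/2$ (again Lemma \ref{lema.m}(c)), after which the same two-region bookkeeping produces $\rho_0$ with $G_{\rho_0}(-\xi)\ge0$ and the minimum principle fails. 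With that repair the argument is complete and correct.
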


\begin{proof} Here we follow the argument as Lemma 3.7 in \cite{A}.
We will show
\[
\lambda_1^{+}(I,\Omega)\le\mu^{+}(I,\Omega).
\]
Suppose on the contrary $\mu^{+}(I,\Omega)<\rho_1<\rho_2<\lambda_1^{+}(I,\Omega)$. Then we may select a function $v_1$ which satisfies
\[
-Iv_1\le \rho_1v_1 \quad {\rm in}\,\, \Omega
\]
and such that $v_1\le 0$ in $\R^n\setminus \Omega$ and $v_1>0$ somewhere in $\Omega$.  We can also select $v_2$ such that $v_2>0$ in $\Omega$, $v_2\ge0$ in  $\R^n\setminus \Omega$ and $v_2$ satisfies
\[
-Iv_2\ge \rho_2v_2 \quad {\rm in}\,\, \Omega.
\]
Since $\rho_1v_2<\rho_2 v_2$, we may apply Theorem \ref{t2} to deduce $v_2=tv_1$ for some $t>0$.
This implies $\rho_1=\rho_2$, a contradiction. Hence,   $\lambda_1^{+}(I,\Omega)\le\mu^{+}(I,\Omega)$. By a similar argument, we can obtain $\lambda_1^{-}(I,\Omega)\le\mu^{-}(I,\Omega)$.

Finally, we will prove the operator $G_\rho$ does not satisfy the minimum principle in $\Omega$ for all large $\rho$. Choosing a continuous function
$h\le0$, $h\not\equiv0$ with compact support in $\Omega$. By Theorem \ref{t2.6}, there exists a unique solution of the following problem
\begin{align*}
	\begin{cases}
		-Iv=h &\quad \mbox{ in } \Omega,\\
		v=0&\quad\mbox{ in } \R^n\setminus\Omega.
	\end{cases}
	\end{align*}
According to the comparison principle, $v\le 0$ in $\Omega$. Since $h\not\equiv0$, we have $v\not\equiv0$. Hence, $v<0$ in $\Omega$ by the strong maximum principle.
Since $h$ has compact support in $\Omega$ we may select a constant $\rho_0>0$ such that $\rho_0v\le h$. Therefore, $v$ satisfying
\[
-Iv\ge \rho_0v \quad{\rm in}\,\,\Omega
\]
an so evidently the operator $G_\rho$ does not satisfy the minimum principle in $\Omega$, for any $\rho\ge \rho_0$. Thus
$\lambda^{-}(I,\Omega)\le\rho_0$. By a similar argument, we have that $\lambda^{+}(I,\Omega)<\infty$.
\end{proof}

Next, we  prove Theorem \ref{teo.main 2} by using Theorem \ref{t2}.

\begin{proof}[\bf
Proof of Theorem \ref{teo.main 2}]

We shall use Theorem \ref{t2} (the first set of inequalities), with $Iu$ replaced by $Iu+\lambda^+_1(I,\Omega) u$, $f\equiv0$ and Remark \ref{r}. 

Suppose $u_1=u$ satisfies (\ref{1.3}). Then we apply Theorem \ref{t2} with $u=\phi^+_1$ and $v=u_1$. 

If $u_1=u$ satisfies (\ref{1.2}), then either $u_1$ is positive somewhere, so $u_1$ satisfies (\ref{1.3}) and we are in the previous case,
or $u_1$ is a negative eigenfunction. Then $\lambda_1^+=\lambda_1^-$, by Theorem \ref{teo.main}. Then, we apply  Theorem \ref{t2} with $u=\phi^+_1$ and $v=-u_1$. This completes the proof.
\end{proof}

The proof of  Theorem \ref{teo.main} follows by using the Krein-Rutman Theorem. In order to give the proof we introduce some notation and definitions. We set the space 
$$
	X:=\{f\in C(\R^n) \,:\, f=0 \mbox { in } \R^n\setminus\Omega\},
$$
and we denote  $K$ the closed convex cone in $X$ with vertex $0$
$$
	K:=\{f\in X \, : \, f\geq 0 \mbox{ in } \Omega\}.
$$
The cone $K$ induces an ordering $\preceq$ on $X$ as follows: given $f,g \in X$ we say that
	$$
	f\preceq g \iff g-f \in K.
	$$
Given $f\in L^\infty(\R^n)$, let $u$ be a viscosity solution of 
\begin{equation} \label{main.eq.2}
\begin{cases}
	-Iu=f &\qquad \mbox{ in }\Omega\\
	u=0  &\qquad \mbox{ in }\R^n\setminus \Omega.
\end{cases}	
\end{equation}
 Since $I$ is invertible, we define the solution operator $T$ as
$$
	T(f):=I^{-1}(-f)=u.
$$

\begin{proof}[\bf Proof of Theorem \ref{teo.main}]
	We check that the hypothesis of  Theorem \ref{teo.kr} are fulfilled. 

	The operator $T$ is \emph{positively $1-$homogeneous}. 
	Given $t>0$, we have that $T(tf)=u$ where $u$ is a viscosity solution of $-I(u)=tf$ in $\Omega$, $u=0$ in $\Omega^c$. Since $I$ is a $1-$homogeneous operator it holds that $f=-I(t^{-1} u)$, from where follows that $tT(f)=u$.
	
	From Proposition \ref{prop.cont} it follows that $T$ is a \emph{continuous operator} on $X$. Moreover, by using the Holder regularity up the boundary of $I$  given in Theorem \ref{t.reg.g} and the Arzela-Ascoli theorem, it follows that $T$ is a \emph{compact operator} on $X$.
		
	The order \emph{$\preceq$ is increasing}.
	Given $f,g\in X$ such that $f\preceq g$, let $u$ and $v$ be viscosity solutions of $-Iu=f$, $-Iv=g$ in $\Omega$ and $u=v=0$ in $\Omega^c$.	By definition of the order, we get that $-I(u)=f\le g=-I(v)$ in $\Omega$, and $u=v=0$ in $\Omega^c$. Hence, by using the Comparison principle given in Lemma 2.4, it follows that $u\leq v$ in $\R^n$, from where $T(f)\preceq T(g)$.

	 Moreover, the order \emph{$\preceq$ is strictly increasing}. If now $f\neq g$ are functions such that  $f\prec g$, by definition of the order, and by using Theorem \ref{teo.sol}, we obtain that
	$$
		-\mathcal{M}^-_\LL(v-u) \geq g-f>0 \quad \mbox{ in } \Omega.
	$$
	Applying the Maximum Principle stated in Theorem \ref{teo.pm} it follows that $v-u>0$ in  $\Omega$, from where $T(f)\prec T(g)$.
	
	Finally, the \emph{(H) Condition} in this context means that there exists a non-zero function $f\in K$ and a positive constant $M$ such that $f \preceq MT(f)$. This conditions is equivalent to analyze the existence of  functions $u$ and $f$ such that for some positive constant $M$ it holds that $f\leq Mu$ in $\Omega$,
	where $u$ is a viscosity solution of $-I(u)=f$ in $\Omega$, $u=0$ in $\Omega^c$ and $f\geq 0$. Such affirmation is proved in  Lemma \ref{lema.cond}.
	
	Consequently, there exists a positive eigenfunction $f\in K$ of $T$ which is unique up to a multiplicative constant, and $\mu$, the corresponding eigenvalue is simple and it can be characterized as the eigenvalue having the smallest absolute value. Observe that for $\mu\neq 0$, we have $T(f)=\mu f$ if and only if $-I(f)=\lam^* f$ for $\lam^*=\frac{1}{\mu}$. 
	
It is now immediate from the definitions of $\mu^+(I,\Omega)$ and $\lambda^+(I,\Omega)$ that $\mu^+(I,\Omega)\le \lambda^*\le \lambda^+(I,\Omega)$, and therefore $\lambda^*=\mu^+(I,\Omega)= \lambda^+(I,\Omega)$ by Lemma \ref{p1}. By a similar argument, we know $\lambda^-(I,\Omega)$ is also the eigenvalue of operator $-I$. We complete the proof.

\end{proof}

\section{An application: Decay estimates for the evolution equation}
	In this section we are interested in the asymptotic behavior as $t\to\infty$ of the solutions of a evolution-type equation involving the operator $I$ defined in \eqref{opp}. In order to state our results, it is convenient to define the notion of viscosity solution in this context.

	We denote the cylinder of radius $r$, height $\tau$ and center $(x,t)\in \R^n\times \R$ by $C_{r,\tau}(x,t):=B_r(x)\times(t-\tau,\tau)$. 

	In this context, we define the space of lower and upper semicontinuous functions as follows. 

	\begin{defn}
		$LSC((t_1,t_2]\to L^1(\omega_s))$ consists of  all  measurable  functions $u:\R^n\times (t_1,t_2] \to \R$ such that for every $t\in(t_1,t_2]$,
		\begin{itemize}
			 \item[i)]$\|u(\cdot,t)^-\|_{L^1(\omega_s)}<\infty$,
			 \item[ii)] $\lim_{\tau\to 0} \|(u(\cdot,t)-u(\cdot,t-\tau))^+\|_{L^1(\omega_s)}=0$.
		\end{itemize}
		 Similarly, $u\in USC((t_1,t_2]\to L^1(\omega_s))$ if $-u\in LSC((t_1,t_2]\to L^1(\omega_s))$. We finally denote $C((t_1,t_2]\to L^1(\omega_s))=LSC((t_1,t_2]\to L^1(\omega_s))\cap USC((t_1,t_2]\to L^1(\omega_s))$.
	\end{defn}

	A lower semicontinuous test function is a pair $(\varphi,C_{r,\tau}(x,t))$ such that 
	$$
		\varphi\in C_x^{1,1}C_t^1(C_{r,\tau}(x,t))\cap LSC((t-\tau,\tau]\to L^1(\omega_s)).
	$$ 
	Similarly, $(\varphi,C_{r,\tau}(x,t))$ is an upper semicontinuous test function if the pair $(-\varphi,C_{r,\tau}(x,t))$ is a lower semicontinuous test function.

	\begin{defn}
		Given an elliptic operator $I$, a function $u\in LSC(\Omega\times(t_1,t_2]) \cap LSC((t_1\times t_2]\to L^1(\omega_s))$ is said to be a \emph{viscosity super solution} to $u_t\geq Iu$ in $\Omega\times (t_1,t_2]$, if for every lower semicontinuos test function $(\varphi,C_{r,\tau}(x,t))$ and   $(x,t)\in\Omega\times (t_1,t_2]$, whatever
		\begin{itemize}
			\item[i)]  $\varphi(x,t)=u(x,t)$ and
			\item[ii)]$\varphi(y,s)\leq u(y,s)$ for $(y,s)\in\R^n\times (t-\tau,t]$,
		\end{itemize}
		 we have that $\varphi_{t}(x,t)\geq I\varphi(x,t)$.
	\end{defn}

	The definition of $u$ being a \emph{viscosity sub solution} to $u_t\leq Iu$ in $\Omega\times (t_1\times t_2]$ is done similarly to the definition of super solution replacing $LSC$ by $USC$ and reversing the last two inequalities. Finally, a \emph{viscosity solution} to $u_t=Iu$ in $\Omega\times (t_1,t_2]$ is a function which is a super and sub solution  simultaneously. 
	
	\bigskip

	Let $u$ be a viscosity solution to the parabolic equation
	\begin{align} \label{calor}
	\begin{cases}
		u_t=Iu &\quad \mbox{ in } \Omega\times (0,\infty),\\
		u(x,0)=h_0(x) &\quad\mbox{ in } \Omega\times \{0\},\\
		u(x,t)=0&\quad\mbox{ in } \partial\Omega\times  (0,\infty).
	\end{cases}
	\end{align}
	We are interested in the asymptotic behavior, as $t\to\infty$, of the solution $h(x,t)$ of \eqref{calor}. Based on results of the local heat equation, one expects $h$ to decay to zero exponentially and that the rate of decay and the extinction profile are somehow connected with the principal eigenvalue $\lam$ and the eigenfunction $v$ given in Theorem \ref{teo.main}, i.e., 
	\begin{equation}  \label{ecu.e}
	\begin{cases}
		-Iv=\lam v &\qquad \mbox{ in }\Omega\\
		v=0  &\qquad \mbox{ in }\R^n\setminus \Omega.
	\end{cases}	
	\end{equation} 
	In contrast with the ordinary heat equation, since an orthonormal basis of eigenfunctions for the space is not present, precise estimates are much harder to obtain.
	Due to the lack of a condition replacing the orthogonality in this settings, instead of obtaining estimates for the difference $|h(x,t)e^{\lam_1 t}-v_1(x)|$, we are only able to estimate the logarithmic difference 
	$$
		\log(h(x,t)e^{\lam t})-\log v(x)=\log\left(\frac{h(x,t)e^{\lam t}}{v(x)}\right).
	$$

	\begin{prop}
	Let $h$, $v$ and $\lam$ be as above. We have that
	$$
		\sup_{\Omega\times (0,\infty)}  \frac{h(x,t)}{v(x)e^{-\lam t}} 
		\leq \sup_{\Omega} \frac{h_0^+(x)}{v(x)},	
	$$
	where $h_0^+=\max\{h_0,0\}$ denotes the positive part of $h_0$.
	\end{prop}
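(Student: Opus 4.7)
The plan is to use a comparison argument with the explicit super-solution ansatz
$$w(x,t) := C\,v(x)\,e^{-\lambda t},\qquad C := \sup_{\Omega}\frac{h_0^+(x)}{v(x)}.$$
If the right-hand side is $+\infty$ there is nothing to prove, so assume $C<\infty$. The key algebraic observation is that $w$ solves the parabolic equation exactly. Indeed, using hypothesis (H3) (positive $1$-homogeneity of $I$) and the eigenvalue identity $-Iv=\lambda v$ from \eqref{ecu.e},
$$w_t(x,t)=-\lambda\, C v(x)e^{-\lambda t},\qquad Iw(x,t)=C e^{-\lambda t}\,Iv(x)=-\lambda\,C v(x)e^{-\lambda t},$$
so $w_t=Iw$ in $\Omega\times(0,\infty)$ in the classical-in-$t$, viscosity-in-$x$ sense (testing against the admissible test functions in the parabolic definition just reduces to the stationary test against $v$).

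Next I would verify that $w$ dominates $h$ on the parabolic boundary. At $t=0$, the choice of $C$ gives $w(x,0)=Cv(x)\geq h_0^+(x)\geq h_0(x)=h(x,0)$ for every $x\in\Omega$. On $\R^n\setminus\Omega$ and for every $t\ge 0$, both functions vanish since $v\equiv 0$ there and $h(\cdot,t)\equiv 0$ there by the Dirichlet boundary condition in \eqref{calor}. Hence $w\geq h$ on the entire parabolic boundary of $\Omega\times(0,\infty)$.

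The final step is to apply a parabolic comparison principle for viscosity sub/super solutions of $u_t=Iu$: since $w$ is a (super-)solution, $h$ is a (sub-)solution, and $w\ge h$ on the parabolic boundary, we conclude $h(x,t)\leq w(x,t)$ in $\Omega\times(0,\infty)$. Dividing by the positive quantity $v(x)e^{-\lambda t}$ and taking the supremum yields exactly
$$\sup_{\Omega\times(0,\infty)}\frac{h(x,t)}{v(x)e^{-\lambda t}}\leq C=\sup_\Omega\frac{h_0^+(x)}{v(x)}.$$

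The main obstacle is justifying the parabolic comparison principle in this nonlocal viscosity setting, which is not proved in the excerpt; one has to invoke (or briefly adapt to the operator $I$) a parabolic analogue of Lemma \ref{lema.comp}, in the spirit of the standard viscosity-solution theory for integro-differential evolution equations (a doubling-of-variables plus penalization argument in $(x,y,t,s)$, exploiting ellipticity of $I$ with respect to $\mathcal{L}$ via Theorem \ref{teo.sol} to bound $\mathcal{M}_\mathcal{L}^-(h-w)\le 0$). A minor subtlety is that $C$ is finite only when $h_0^+$ vanishes at $\partial\Omega$ at least as fast as $v$; for generic bounded $h_0$ the statement is vacuous and no further work is needed, but to obtain the inequality with a sharp constant one may first truncate $h_0$ and pass to the limit using the stability of viscosity solutions (Proposition \ref{prop.cont}).
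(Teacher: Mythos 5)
Your proposal is built around the same barrier as the paper, namely $v(x)e^{-\lambda t}$ scaled by $C=\sup_\Omega h_0^+/v$, and the boundary verification (at $t=0$ and on $\R^n\setminus\Omega$) matches. The difference is the final step, and it is a genuine gap that you yourself flag: you close the argument by appealing to a parabolic comparison principle for viscosity sub/super-solutions of $u_t=Iu$, which is not available in the paper, and you only sketch how one \emph{might} prove it (doubling of variables, penalization). That is the hard part of the whole proposition, so the proof as written is incomplete.

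The paper sidesteps the need for any such black-box comparison theorem. It perturbs the barrier to
$$
w_\varepsilon(x,t)=e^{-\lambda t}v(x)+\frac{\varepsilon}{T-t},
$$
which has two crucial features your unperturbed $w=Cve^{-\lambda t}$ lacks: it is a \emph{strict} super-solution, $(w_\varepsilon)_t> Iw_\varepsilon$ (because $I$ acts only in $x$, the added term is constant in $x$ and contributes $\varepsilon/(T-t)^2>0$ to the time derivative while leaving $Iw_\varepsilon$ unchanged), and it blows up uniformly as $t\to T^-$. The blow-up forces $h-w_\varepsilon$ to attain an interior local maximum on $\Omega\times(0,T)$; touching $h$ from above there by the smooth function $w_\varepsilon$ and invoking the viscosity sub-solution property of $h$ gives $(w_\varepsilon)_t\le Iw_\varepsilon$ at that point, contradicting strictness. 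Letting $\varepsilon\to0$ and $T\to\infty$ yields the estimate. So the two proofs share the same ansatz, but the paper's $\varepsilon/(T-t)$ device is exactly the elementary mechanism that replaces (and renders unnecessary) the parabolic comparison principle you would otherwise have to establish. If you want to keep your structure, you should incorporate that perturbation rather than defer to an unproved comparison lemma.

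One minor remark: your observation that $w$ solves the equation \emph{exactly} is correct, but it is actually counterproductive for a direct viscosity contradiction, since equality at a touching point gives no contradiction; strict inequality, produced by the $\varepsilon/(T-t)$ term, is what makes the pointwise argument close.
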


	\begin{proof}
		 By replacing $h_0$ with its positive part if necessary, we may assume that the initial data h 0 is non-negative.
		It clearly suffices to show that
		$$
			\sup_{\Omega\times (0,T)}  \frac{h(x,t)}{v(x)e^{-\lam t}} 
			= \max\left\{\sup_{\Omega} \frac{h_0^+(x)}{v(x)},0 \right\}
		$$
		for any $T>0$. We argue by contradiction and suppose that
		$$
			0< \frac{h(x_0,t_0)}{v(x_0)e^{-\lam t_0}}  = \sup_{\Omega\times (0,T)}  \frac{h(x,t)}{v(x)e^{-\lam t}} 
		$$	
		for some $x_0\in \Omega$ and $0<t_0\leq T$. We denote $Q$ a neighborhood of $(x_0,t_0)$ where $h$ is positive.
		We define the function $w_\ve(x)=e^{-\lam t} v(x) +  \frac{\ve}{T-t}$. An straightforward computation shows that 
		\begin{equation}  \label{xxxxx}
			(w_\ve)_t> Iw_\ve.
		\end{equation}
			
		Moreover, $w_\ve(x,t)\to\infty$ uniformly in $x$ as $t\to T$ and the function $h-w_\ve$ has a local maximum in $Q$ for $\ve>0$ small enough. For simplicity of notation, we denote this maximum point also $(x_0,t_0)$ and notice that $t_0<T$.
		
		Since $h$ is a viscosity solution of \eqref{calor}, the last claim implies that	$(w_\ve)_t\leq I(w_\ve)$, with contradict inequality \eqref{xxxxx}, and the proof follows.
	 \end{proof}
	 
	 \begin{cor}
		 Let $h$ be a viscosity solution of \eqref{calor} with $h_0\in C(\bar \Omega)$. Then
		 $$
			 \sup_\Omega |h(x,t)|=o(e^{-\lam t}) \qquad \mbox{for all } \lam<\lam_1(\Omega)
		 $$
		 being $\lam_1$ the principal eigenvalue of \eqref{ecu.e}.
	 \end{cor}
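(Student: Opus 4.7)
The plan is to deduce the decay estimate directly from the pointwise bound obtained in the preceding proposition. Applied to $h$, that result yields
\[
h(x,t) \le \left(\sup_{\Omega} \frac{h_0^+}{v}\right) v(x)\, e^{-\lam_1 t} \le C_+\, e^{-\lam_1 t},
\]
which controls the positive part of $h$. For the negative part I would observe that $-h$ solves the parabolic equation $(-h)_t = \tilde I(-h)$ with the dual Isaacs operator $\tilde I u := -I(-u)$; since swapping $\inf$ and $\sup$ leaves the hypotheses (H1)--(H4) intact, all the theory of Sections 2--6 applies to $\tilde I$. Its positive principal pair $(\tilde v,\tilde \lam)$ is related to the negative eigenpair of $I$ by $\tilde v = -\phi^-$ and $\tilde \lam = \lam_1^-(I,\Omega)$, so applying the proposition to $-h$ with these data gives $-h(x,t) \le C_-\, e^{-\lam_1^- t}$.

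Setting $\lam_1 := \min(\lam_1^+,\lam_1^-)$ and combining the two one-sided estimates produces
\[
\sup_\Omega |h(x,t)| \le K\, e^{-\lam_1 t}
\]
for a constant $K=K(h_0,v,\tilde v)$. From here the corollary is immediate: for any $\lam<\lam_1$,
\[
e^{\lam t}\sup_\Omega |h(x,t)| \le K\, e^{-(\lam_1-\lam) t}\longrightarrow 0 \qquad \text{as } t\to\infty,
\]
which is precisely the statement $\sup_\Omega |h(x,t)| = o(e^{-\lam t})$.

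The delicate point I expect to be the main obstacle is ensuring finiteness of the constants $C_\pm = \sup_\Omega h_0^\pm/v$. Because the eigenfunctions $v$ and $\tilde v$ vanish on $\partial \Omega$ at the boundary rate $d(x)^\beta$ inferred from Lemma \ref{lema.m} and the barrier \eqref{f.xi}, while an arbitrary initial datum $h_0\in C(\bar \Omega)$ need not vanish at the same rate, the quotient $h_0/v$ may blow up at $\partial\Omega$. To bypass this I would first use the parabolic comparison principle against a stationary multiple of $\xi$ to show that at any fixed time $t_0>0$ the solution satisfies $|h(x,t_0)|\le M\, d(x)^\beta$, so that $h(\cdot,t_0)/v$ is bounded; restarting the proposition at time $t_0$ with these new initial data yields the estimate with a possibly larger but still finite constant, and the extra factor $e^{\lam t_0}$ is harmless for the asymptotic $o(e^{-\lam t})$ conclusion.
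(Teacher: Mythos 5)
Your overall strategy is sound and likely the intended one: the preceding proposition only controls the positive part of $h$ (since the quotient $h/(v e^{-\lam t})$ is negative where $h$ is), so you correctly supplement it by passing to $-h$ and the dual operator $\tilde I u = -I(-u)$, for which the positive eigenpair is $(-\phi^-,\lam_1^-)$. That naturally produces decay at rate $\min(\lam_1^+,\lam_1^-)$, which is the honest reading of the statement's ambiguous "$\lam_1$". You are also right that finiteness of $\sup_\Omega h_0^+/v$ is the genuine obstacle, since a general $h_0\in C(\bar\Omega)$ need not vanish on $\partial\Omega$; the paper glosses over this.

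The gap is in the proposed repair. You claim that comparing $h$ with a multiple of $\xi$ on $(0,t_0]$ yields $|h(\cdot,t_0)|\le M d^\beta$ and hence a bounded quotient $h(\cdot,t_0)/v$. But the two exponents produced by the machinery in the paper do not match. The upper barrier argument (Lemma \ref{lema.m}, case (b), and Lemma \ref{lema.comp}) forces $\beta\in(0,\beta_1)$, whereas the only lower bound on the eigenfunction available in the paper comes from the (H)-condition proof via $\mathcal M^-_\LL\xi\ge 0$, which requires $\beta'\in(\beta_2,2s)$; thus $v\gtrsim d^{\beta'}$ with $\beta'>\beta_2\ge\beta_1>\beta$. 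Near $\partial\Omega$ one then only gets
\[
\frac{|h(x,t_0)|}{v(x)} \;\lesssim\; d(x)^{\beta-\beta'} \;\longrightarrow\; \infty,
\]
so boundedness of the restarted quotient is not established. In the linear case $\lam=\Lambda$ (fractional Laplacian) one has $\beta_1=\beta_2=s$ and the exponents can be matched, but in the genuinely nonlinear case $\beta_1<\beta_2$ the argument as written does not close. To repair it you would need a Hopf-type lower bound for the eigenfunction at the rate $d^{\beta}$ with the same $\beta<\beta_1$ used in the upper barrier (or, equivalently, a sharper two-sided boundary estimate $u\asymp d^{\beta^*}$ for solutions and for $v$ with a common exponent $\beta^*$), neither of which is supplied by the lemmas in the paper. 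Absent such an estimate, the restart idea leaves the corollary unproved for a general continuous $h_0$; the cleanest fix is to state the corollary for initial data satisfying $h_0^\pm\le C v$, or to establish the matching boundary rates separately.
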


\section{Appendix: the Krein--Rutman Theorem }
	Let $X$ be a real Banach space. Let $K$ be a closed convex cone in $X$ with vertex $0$, i.e., 
	\begin{itemize}
	\item $0\in K$
	\item $x\in K, t\in \R^+$ then $tx\in K$
	\item $x,y \in K$ then $x+y \in K$
	\end{itemize}
	We further assume that
	$$ K\cap-K =\{0\}.$$
	The cone $K$ induces an ordering $\preceq$ on $X$ as follows. Given $x,y \in X$ we say that
	$$
	x\preceq y \iff y-x \in K.
	$$
	A mapping $T:X \to X$ is said to be increasing if $x\preceq y \Rightarrow T(x) \preceq T(y)$. The mapping is said to be compact if it takes bounded subsets of $X$ into relatively compact subsets of $X$. We say that the mapping is positively $1-$homogeneous if it satisfies the relation $T(tx)=tT(x)$ for all $x\in X$ and $t\in \R^+$.
	
	\begin{thm}[Krein-Rutmann for non-linear operator, \cite{M}] \label{teo.kr}
		Let $T:X\to X$ be an increasing, positively $1-$homogeneous compact continuous operator(non-linear) on $X$ for which there exists a non-zero $u\in K$ and $M>0$ such that
		\begin{equation*}
			\bold{(H)}\qquad u \preceq M T u.
		\end{equation*}
		Then, $T$ has a non-zero eigenvector $x_0\in K$. Furthermore, if $K$ has non-empty interior and if $T$ maps $K\setminus\{0\}$ into $K^\circ$ and is strictly increasing, then $x_0$ is the unique positive eigenvector in $K$ up to a multiplicative 	constant. And, finally if $\mu_0$ be the corresponding eigenvalue, then it can be characterized as the eigenvalue having the smallest absolute value and furthermore, it is simple.
	\end{thm}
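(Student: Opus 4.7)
The plan is to combine an approximation/regularization scheme with a fixed-point theorem and then pass to the limit, using condition $\bold{(H)}$ to keep the approximate eigenvalue bounded uniformly away from zero. Uniqueness and simplicity then follow from strict monotonicity and the interior mapping property.

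First I would fix $u\in K\setminus\{0\}$ from $\bold{(H)}$, normalized so $\|u\|=1$, and for small $\ve>0$ introduce the perturbation $T_\ve(x):=T(x)+\ve u$. Since $T$ is compact and $u\in K$, $T_\ve$ is compact, sends $K$ into $K$, and satisfies $T_\ve(x)\succeq\ve u$; in particular $\|T_\ve(x)\|>0$ for every $x\in K$. I would then apply a fixed-point argument to the normalized map $\Phi_\ve(x):=T_\ve(x)/\|T_\ve(x)\|$ on $K_1:=\{x\in K:\|x\|=1\}$. Because $K_1$ is not convex, Schauder's theorem must be set up carefully --- one either works on a compact convex subset of $X$ generated by the compact image of $T$, or invokes Krasnosel'skii's fixed-point theorem for cones. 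Either way one obtains $x_\ve\in K_1$ and $\mu_\ve>0$ with $T(x_\ve)+\ve u=\mu_\ve x_\ve$.

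The key quantitative step is to show $\mu_\ve\ge 1/M$ uniformly in $\ve$. From the identity above, $x_\ve\succeq(\ve/\mu_\ve)u$. Monotonicity and $1$-homogeneity together with $u\preceq MT(u)$ give $T(x_\ve)\succeq(\ve/\mu_\ve)T(u)\succeq\ve/(\mu_\ve M)\,u$, and re-inserting in the equation yields $x_\ve\succeq(\ve/\mu_\ve)(1+1/(\mu_\ve M))u$. Bootstrapping this $k$ times gives $x_\ve\succeq(\ve/\mu_\ve)\sum_{j=0}^{k}(\mu_\ve M)^{-j}u$; if $\mu_\ve M\le 1$ the partial sums diverge while $\|x_\ve\|=1$, a contradiction. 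Hence $\mu_\ve>1/M$. Since $\{x_\ve\}$ is bounded and $T$ compact, up to a subsequence $T(x_\ve)\to y$ and $\mu_\ve\to\mu_0\ge 1/M$, and therefore $x_\ve=(T(x_\ve)+\ve u)/\mu_\ve\to x_0:=y/\mu_0\in K$ with $\|x_0\|=1$. Continuity of $T$ gives $T(x_0)=\mu_0 x_0$, establishing the eigenpair.

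For uniqueness and simplicity under the additional hypotheses $K^\circ\ne\emptyset$, $T(K\setminus\{0\})\subset K^\circ$, and strict monotonicity, suppose $T(y)=\nu y$ with $y\in K\setminus\{0\}$. Applying $T$ places $y,x_0\in K^\circ$, so that $t_*:=\sup\{t\ge 0:tx_0\preceq y\}$ is finite and strictly positive with $t_*x_0\preceq y$. Applying $T$ yields $\mu_0 t_* x_0\preceq\nu y$; if $\nu<\mu_0$ one gets $(\mu_0/\nu)t_* x_0\preceq y$ with $\mu_0/\nu>1$, contradicting maximality of $t_*$ by strict monotonicity. Swapping the roles of $x_0$ and $y$ rules out $\nu>\mu_0$, so $\nu=\mu_0$ and $y\in\R^+ x_0$. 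The characterization of $\mu_0$ as the eigenvalue of smallest absolute value follows by a similar comparison argument applied to $|z|$ (or its positive part in a suitable decomposition) against scalar multiples of $x_0$. The hardest step I expect is the initial fixed-point portion: the non-convexity of $K_1$ blocks a direct application of Schauder, and one must invest some care into either a retraction onto a compact convex set or the use of a cone-adapted fixed-point theorem; once past that, the uniform lower bound $\mu_\ve\ge 1/M$ and the uniqueness/simplicity parts are rigid consequences of $\bold{(H)}$ and strict monotonicity respectively.
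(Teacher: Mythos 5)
The paper does not prove Theorem \ref{teo.kr}: it is stated in the appendix and cited from Mahadevan's note \cite{M} with no argument supplied, so there is no in-paper proof against which to compare. I will therefore assess your proposal on its own terms. Your overall strategy --- perturb to $T_\ve(x)=T(x)+\ve u$, run a cone fixed-point argument to obtain $(\mu_\ve,x_\ve)$ with $T(x_\ve)+\ve u=\mu_\ve x_\ve$ and $\|x_\ve\|=1$, use (H) to force $\mu_\ve$ uniformly away from zero, and pass to the limit by compactness --- is the standard route to nonlinear Krein--Rutman theorems and is in the same spirit as \cite{M}. One small point: the bootstrap giving $x_\ve\succeq(\ve/\mu_\ve)\sum_{j\le k}(\mu_\ve M)^{-j}u$ is correct, but ``the partial sums diverge while $\|x_\ve\|=1$, a contradiction'' silently invokes normality of the cone, which is not among the hypotheses. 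The clean version using only $K\cap(-K)=\{0\}$ is: from $x_\ve\succeq c_ku$ with $c_k\to\infty$, divide by $c_k$ and use closedness of $K$ to get $-u\in K$, hence $u=0$, a contradiction. This is fixable.

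There are two genuine gaps. First, in the uniqueness step you correctly derive $\nu=\mu_0$, but then simply assert ``and $y\in\R^+x_0$.'' Equality of the eigenvalues does not by itself give proportionality of the eigenvectors, and your $t_*$-comparison stalls at that point: once $\nu=\mu_0$, applying $T$ to $t_*x_0\preceq y$ just reproduces $t_*x_0\preceq y$. To conclude $y=t_*x_0$ one needs an additional property, e.g.\ that $T$ is \emph{strongly} increasing in the sense $x\prec y\Rightarrow T(y)-T(x)\in K^\circ$, which allows one to extract $y-t_*x_0\succeq\delta x_0$ for some $\delta>0$ and contradict maximality of $t_*$; the stated hypotheses ``strictly increasing'' together with ``$T(K\setminus\{0\})\subset K^\circ$'' do not yield this automatically, since $T$ is nonlinear and $T(y)-T(t_*x_0)$ is not $T(y-t_*x_0)$. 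Second, the ``smallest absolute value'' characterization is only gestured at, and the suggested route via ``its positive part in a suitable decomposition'' does not exist in a general ordered Banach space: unless $(X,K)$ is a lattice, an element has no positive/negative part, so the proposed comparison of $|z|$ with multiples of $x_0$ is not available. That part of the theorem needs its own argument rather than an appeal to a nonexistent decomposition.
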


 \setcounter{equation}{0}
\section{ Acknowledgements}
The authors would like to express their thanks to P. Felmer and B. Sirakov for their valuable comments on the ABP estimate.
A. Quaas was partially supported by Fondecyt Grant No. 1151180 Programa Basal, CMM. U. de Chile and Millennium Nucleus Center for Analysis of PDE NC130017.

\end{document}